\newtheorem{thm}{Theorem}[section]
\newtheorem*{thm*}{Theorem}
\newtheorem{cor}[thm]{Corollary}
\newtheorem{prop}[thm]{Proposition}
\newtheorem{lem}[thm]{Lemma}
\newtheorem*{lemma}{Lemma}
\theoremstyle{definition}
\newtheorem{defn}[thm]{Definition}
\newtheorem{exmp}[thm]{Example}
\newtheorem{notn}[thm]{Notation}
\newtheorem{rem}[thm]{Remark}
\theoremstyle{remark}
\newcommand{\Z}{\mathbb{Z}}
\newcommand{\Q}{\mathbb{Q}}
\newcommand{\G}{\mathbf{G}}
\renewcommand{\P}{\mathbb{P}}
\renewcommand{\exp}{\text{exp}}
\newcommand{\trop}{\text{trop}}
\renewcommand{\d}{\partial}
\newcommand{\B}{\mathbf{B}}
\newcommand{\R}{\mathbf{R}}
\newcommand{\D}{\mathcal{D}}
\newcommand{\val}{\operatorname{val}}
\newcommand{\Aut}{\operatorname{Aut}}
\renewcommand{\H}{\mathbf{H}}
\renewcommand{\emptyset}{\varnothing}
\newcommand{\T}{\mathbf{T}}
\newcommand{\V}{\mathsf{V}}
\renewcommand{\SS}{\mathfrak{S}}
\let\c@equation\c@thm
\numberwithin{equation}{section}
\title{Automorphisms of tropical Hassett spaces}
\author{Sam Freedman}
\author{Joseph Hlavinka}
\author{Siddarth Kannan}
\address{Department of Mathematics, Brown University, Providence, RI 02906}
\email[S. Freedman]{\url{samuel_freedman@brown.edu}}
\email[J. Hlavinka]{\url{joseph_hlavinka@brown.edu}}
\email[S. Kannan]{\url{siddarth_kannan@brown.edu}}
\begin{document}
	
	\begin{abstract}
	Given an integer $g \geq 0$ and a weight vector $w \in \Q^n \cap (0, 1]^n$ satisfying $2g - 2 + \sum w_i > 0$, let $\Delta_{g, w}$ denote the moduli space of $n$-marked, $w$-stable tropical curves of genus $g$ and volume one. We calculate the automorphism group $\Aut(\Delta_{g, w})$ for $g \geq 1$ and arbitrary $w$, and we calculate the group $\Aut(\Delta_{0, w})$ when $w$ is \textit{heavy/light}. In both of these cases, we show that $\Aut(\Delta_{g, w}) \cong \Aut(K_w)$, where $K_w$ is the abstract simplicial complex on $\{1, \ldots, n\}$ whose faces are subsets with $w$-weight at most $1$. We show that these groups are precisely the finite direct products of symmetric groups. The space $\Delta_{g, w}$ may also be identified with the dual complex of the divisor of singular curves in the algebraic Hassett space $\overline{\mathcal{M}}_{g, w}$. Following the work of Massarenti and Mella ~\cite{MassMella} on the biregular automorphism group $\Aut(\overline{\mathcal{M}}_{g, w})$, we show that $\Aut(\Delta_{g, w})$ is naturally identified with the subgroup of automorphisms which preserve the divisor of singular curves. 
	\end{abstract}
	\maketitle
	\tableofcontents
	\section{Introduction}
	
	Fix integers $g, n \geq 0$ such that $2g - 2 + n > 0$, let $\mathcal{M}_{g, n}$ denote the moduli stack of smooth $n$-marked algebraic curves of genus $g$, and let $\overline{\mathcal{M}}_{g, n}$ denote its Deligne-Mumford-Knudsen compactification by stable curves. Brendan Hassett ~\cite{hassett} has given a large family of alternate modular compactifications of $\mathcal{M}_{g, n}$: given a weight vector $w \in \Q^n \cap (0, 1]^n$ satisfying
	\[2g - 2 + \sum_{i = 1}^{n} w_i > 0, \]
    Hassett constructs a smooth and proper Deligne-Mumford moduli stack $\overline{\mathcal{M}}_{g, w}$, birational to $\overline{\mathcal{M}}_{g, n}$, which contains $\mathcal{M}_{g, n}$ as a dense open substack. The points of $\overline{\mathcal{M}}_{g, w}$ represent $n$-pointed nodal curves $(C, p_1, \ldots, p_n)$, satisfying (i) that the $\Q$-divisor $K_C + \sum w_i p_i$ is ample along each component of $C$, where $K_C$ is the canonical divisor of $C$, and (ii) if $p_{i_{1}} = \cdots = p_{i_{r}}$, then $w_{i_1} + \cdots + w_{i_r} \leq 1$. In particular, when $w = (1^{(n)})$ is the all $1$'s vector, we have an equality $\overline{\mathcal{M}}_{g, w} = \overline{\mathcal{M}}_{g, n}$.
    
    An important feature of the compactifaction $\mathcal{M}_{g, n} \subset \overline{\mathcal{M}}_{g, n}$ is that the boundary divisor \[\d\overline{\mathcal{M}}_{g, n} := \overline{\mathcal{M}}_{g, n} \smallsetminus \mathcal{M}_{g, n}\] is normal crossings. In ~\cite{CGP1}, Chan, Galatius, and Payne, following work of Harper ~\cite{Harper} and Abramovich-Caporaso-Payne ~\cite{ACP}, show how to construct the dual complex $\Delta(\mathcal{X}, \mathcal{D})$ of a normal crossings divisor $\mathcal{D}$ on a Deligne-Mumford stack $\mathcal{X}$. They study $\Delta(\mathcal{X}, \mathcal{D})$ in the case where $\mathcal{X} = \overline{\mathcal{M}}_{g, n}$ and $\mathcal{D} = \d\overline{\mathcal{M}}_{g, n}$, showing that $\Delta(\mathcal{X}, \mathcal{D}) = \Delta_{g, n}$ is identified with the link of the cone point in the moduli space $M_{g, n}^\trop$ of stable $n$-marked tropical curves of genus $g$.

    
    On the other hand, the complement of $\mathcal{M}_{g, n}$ in Hassett's compactification $\overline{\mathcal{M}}_{g, w}$ is not in general normal crossings. However, if we put $\mathcal{M}_{g, w}$ for the locus of smooth, but not necessarily distinctly marked, curves in $\overline{\mathcal{M}}_{g, w}$, then the complement
    \[\d \overline{\mathcal{M}}_{g, w} : = \overline{\mathcal{M}}_{g, w} \smallsetminus \mathcal{M}_{g, w} \]
    has normal crossings, and the resulting dual intersection complex $\Delta_{g, w}$ is the link of the cone point in the moduli space $M_{g, w}^\trop$ of $n$-marked, $w$-stable tropical curves of genus $g$, as has been established by Ulirsch ~\cite{Ulirsch}.
    
    In this paper, we are interested in the automorphism groups of the complexes $\Delta_{g, w}$, taken in the category of \textit{symmetric $\Delta$-complexes}, as defined in ~\cite{CGP1} and recalled in Section \ref{construction}. Given a weight vector $w$, we can form an abstract simplicial complex $K_w$ with vertex set $\{1, \ldots, n\}$ by declaring that a subset $S \subseteq \{1, \ldots, n\}$ belongs to $K_w$ if and only if $\sum_{i \in S} w_i \leq 1$; this construction was considered by Alexeev and Guy ~\cite{alexeev} in their work on moduli of weighted stable maps. See Figure \ref{WeightComplexExamples} for some examples of the complex $K_w$. Our first main theorem determines $\Aut(\Delta_{g, w})$ in terms of $K_w$ for $g \geq 1$.
    \begin{thm}\label{Auts}
    Let $g \geq 1$ and suppose $w \in \Q^n \cap (0, 1]^n$ for some $n$ such that $2g - 2 + n \geq 3$. Then
    \[\Aut(\Delta_{g, w}) \cong \Aut(K_w), \]
    where $\Aut(K_w)$ acts by permuting the markings.
    \end{thm}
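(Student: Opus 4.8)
The homomorphism realizing the isomorphism is the map $\Phi \colon \Aut(K_w) \to \Aut(\Delta_{g,w})$ sending a permutation $\sigma$ of $\{1, \ldots, n\}$ to the automorphism that relabels the markings of each $w$-stable tropical curve by $\sigma$. To see that $\Phi$ is well defined one uses that whether a combinatorial type is $w$-stable depends on $w$ only through $K_w$: the genus and weighted valence conditions never mention the weights directly, and the only genuinely weight-sensitive constraints are that a genus-$0$ vertex carrying a single edge and a set $L$ of legs is stable precisely when $L \notin K_w$, and that a genus-$0$ vertex with exactly two edges must carry at least one leg. Since $\sigma \in \Aut(K_w)$ preserves faces and non-faces, relabelling preserves $w$-stability, so $\Phi$ is a well-defined group homomorphism, and it remains to prove that it is bijective.

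For injectivity I will exhibit a single $w$-stable type on which relabelling by a nontrivial $\sigma$ acts nontrivially. Let $G$ be the $n$-cycle with the marking $i$ placed at the $i$-th vertex, all of the genus placed at one vertex (when $g \geq 2$; when $g = 1$ the cycle already has the correct first Betti number). Every vertex of $G$ has at least three incident half-edges once legs are counted, and the legs lie at distinct vertices so there is no constraint from $K_w$; hence $G$ is $w$-stable, and since its markings are pairwise distinct $\Aut(G)$ is trivial, so the corresponding cell of $\Delta_{g,w}$ is an honest simplex on the edge set of $G$. If $\sigma \neq \mathrm{id}$, then any isomorphism from $G$ to its relabelling is a nontrivial automorphism of the underlying cycle and so permutes the edges nontrivially; tracing this through the definition of $\Phi$ shows that $\Phi(\sigma)$ acts nontrivially on this cell. (The cases $n \leq 1$ are vacuous, and when $n = 2$ one has $g \geq 2$ and may instead use the two-vertex, two-edge graph with the two markings on the two vertices and distinct vertex genera.)

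Surjectivity is the heart of the matter. Fix $\alpha \in \Aut(\Delta_{g,w})$; the strategy is to recover the set of markings, and the complex $K_w$, intrinsically from $\Delta_{g,w}$ in a way that $\alpha$ must respect. I would first analyze the cells that are not proper faces of any other cell: such a cell corresponds to a $w$-stable type that does not arise from a larger $w$-stable type by contracting an edge, and one checks that in such a type every vertex has genus zero, every vertex carrying no leg is trivalent, every vertex carrying a leg is either bivalent with exactly one leg or univalent, and the leg-sets at the univalent vertices (the ``tails'') are exactly the minimal non-faces of $K_w$. Thus the markings reappear as the leg-carrying vertices, and the face maps encode how these interact: contracting the edge at a tail, or one of the two edges at a bivalent leg-vertex, is a codimension-one degeneration in which the relevant markings migrate onto an adjacent vertex. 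Tracking these distinguished faces through $\alpha$ produces a permutation $\sigma \in \mathfrak{S}_n$ of the markings, and since $\alpha$ carries tail-cells to tail-cells it carries minimal non-faces of $K_w$ to minimal non-faces, so $\sigma \in \Aut(K_w)$. Replacing $\alpha$ by $\Phi(\sigma)^{-1}\alpha$, we may assume $\sigma = \mathrm{id}$, i.e.\ $\alpha$ fixes each maximal type together with its marking labels; a connectivity argument for the top-dimensional types — any two are joined by a sequence of local modifications (edge flips, and for $g = 1$ simply adjacent transpositions of the cyclic order of markings along the $n$-cycle), each realized through a shared codimension-one face — together with a downward induction on the codimension of a cell then forces $\alpha$ to act as the identity on every cell, whence $\alpha = \mathrm{id}$.

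The principal obstacle is making this reconstruction rigorous: a priori $\alpha$ only sees the abstract symmetric $\Delta$-complex, so one must verify that the genus $g$, the underlying graph, and the placement of markings are intrinsic and respected by $\alpha$. This is exactly where $g \geq 1$ is essential — already $\Delta_{0,4}$ is a set of three points, with automorphism group $\mathfrak{S}_3$ strictly smaller than $\Aut(K_w) = \mathfrak{S}_4$, so the statement is genuinely false for $g = 0$ — and positive genus makes the underlying graph rigid enough that no exotic automorphisms survive (for $g \geq 2$ one can profitably combine this with the forgetful, stabilizing map $\Delta_{g,w} \to \Delta_g$). A secondary technical point is that the combinatorial shape of $K_w$ varies considerably, from a full simplex to a disjoint union of simplices to an intermediate threshold complex, so the argument that $\sigma$ preserves $K_w$ is best phrased via minimal non-faces, which appear uniformly as the tails of maximal types.
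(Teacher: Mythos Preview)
Your proposal contains a correct and useful observation---the characterization of the facets of $\Delta_{g,w}$ as trivalent genus-zero graphs whose leaf marking-sets are the minimal non-faces of $K_w$---but the surjectivity argument has a genuine gap at exactly the point where the paper does the real work. You write that ``tracking these distinguished faces through $\alpha$ produces a permutation $\sigma \in \mathfrak{S}_n$,'' but you never explain how. A priori $\alpha$ could send a facet $[\G,\tau]$ to a facet $[\G',\tau']$ where $\G'$ has the same underlying unlabelled graph but a completely different distribution of markings, and there is no reason the induced bijections on markings should be \emph{consistent} from one facet to the next. Establishing that consistency is the crux of the theorem. The paper does not attempt it at the level of facets; instead it restricts to the subcomplex $\V^2_{g,w}$ of graphs with at most two vertices, where the facets are parametrized by triples $(k,\ell,A)$ and one can run explicit expansion-counting arguments (Theorem~\ref{OrbitsPreserved}) to show that $\alpha$ preserves $S_n$-orbits and then pin down $\sigma$ via the simplices $[\B^{0,0}_x]$. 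Your facets are far more complicated combinatorially, and no analogue of this counting is offered.

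The second gap is the ``downward induction'' you invoke to show that once $\alpha$ fixes the facets it fixes everything. Note first that $\Delta_{g,w}$ is not pure in general (your own description shows that facets with a univalent vertex carrying a size-$3$ minimal non-face have strictly fewer edges than facets with all markings at bivalent vertices), so there is no uniform top dimension to induct down from. More seriously, even granting purity, a simplex can be a face of many facets, and knowing each facet is fixed does not by itself tell you the face maps line up. In the paper this step is handled in the opposite direction: Theorem~\ref{RestrictionInjects} shows that restriction to $\V^2_{g,w}$ is \emph{injective}, and its proof rests on the reconstruction theorem (Theorem~\ref{Reconstruction}) that an edge-labelled graph with at least three vertices is determined by its deck of nonloop edge-contractions. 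You would need an analogous rigidity statement, and none is supplied. Your connectivity claim for top-dimensional types via ``edge flips'' is plausible for $w = (1^{(n)})$ but is not obviously true for arbitrary $w$, where the $w$-stability constraint can disconnect the relevant graph of flips.
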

    Here $\Aut(K_w)$ is viewed as a subgroup of $S_n := \mathrm{Perm}(\{1, \ldots, n\})$. Theorem \ref{Auts} will be proven in Section \ref{ProofofAuts}, and the failure of the $g = 0$ case will be further explored and partially remedied in Section \ref{genuszero}.  Following Cavalieri, Hampe, Markwig, and Ranganathan ~\cite{CHMR2014moduli}, we refer to weight vectors satisfying the hypotheses of the following theorem as \textit{heavy/light}, with $m$ light markings and $n$ heavy markings.
    
    \begin{thm}\label{GenusZero}
    Suppose $n, m \geq 2$, with $n + m \geq 5$, and put $w = (\varepsilon^{(m)},1^{(n)})$ where $\varepsilon \leq 1/m$. Then we have
    \[\Aut(\Delta_{0, w}) \cong \Aut(K_w) \cong S_m \times S_n. \]
    \end{thm}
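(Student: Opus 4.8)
The plan is to establish the two isomorphisms of the statement separately. The identification $\Aut(K_w) \cong S_m \times S_n$ is a direct computation: when $\varepsilon \leq 1/m$, a subset $S$ of the $n+m$ markings has $w$-weight at most $1$ precisely when $S$ consists entirely of light markings — in which case its weight is at most $m\varepsilon \leq 1$ — or $S$ is a singleton. Hence $K_w$ is the disjoint union of the full $(m-1)$-simplex on the $m$ light markings and $n$ isolated vertices, one for each heavy marking. Since $m \geq 2$, the simplicial component carrying an edge is canonically distinguished, so every automorphism of $K_w$ preserves the partition of $\{1, \dots, n+m\}$ into light and heavy markings and restricts to an arbitrary permutation on each part; this gives $\Aut(K_w) = S_m \times S_n$. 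As in Theorem \ref{Auts}, an element $\sigma \in \Aut(K_w) \leq S_{n+m}$ fixes $w$ componentwise, so relabeling markings by $\sigma$ takes $w$-stable genus-$0$ tropical curves to $w$-stable genus-$0$ tropical curves and induces an automorphism $\Phi(\sigma)$ of $\Delta_{0,w}$. The substance of the theorem is that $\Phi \colon \Aut(K_w) \to \Aut(\Delta_{0,w})$ is an isomorphism.

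Injectivity of $\Phi$ will be handled as in the $g \geq 1$ case: one exhibits a single cell of $\Delta_{0,w}$ whose stabilizer in $S_{n+m}$ is trivial — concretely, a caterpillar curve with the $n$ heavy and $m$ light markings placed asymmetrically along the spine, which is possible because $n, m \geq 2$ and $n + m \geq 5$ — so that any $\sigma$ with $\Phi(\sigma) = \id$ must fix every marking. For surjectivity, let $\varphi \in \Aut(\Delta_{0,w})$. Recall that the cells of $\Delta_{0,w}$ are the $w$-stable genus-$0$ $(n+m)$-marked tropical curves (trees with no vertex weights, with $w$-stability at a vertex $v$ reading $|E(v)| + \#\{\text{heavy legs at } v\} + \varepsilon\cdot\#\{\text{light legs at } v\} > 2$), and that $\varphi$ permutes these preserving dimension, face maps, and links. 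The asymmetry forced by $\varepsilon \leq 1/m$ is the key input: a light marking can never be separated onto a one-edged component, whether alone or together with other light markings, whereas a heavy marking $i$ can be separated together with exactly one light marking $j$. I would use this to characterize, purely in terms of dimensions, incidences, and links, the vertices of $\Delta_{0,w}$ that correspond to the two-component curve with $\{i, j\}$ on one side and the remaining markings on the other; showing that $\varphi$ permutes this distinguished family then forces a well-defined permutation $\sigma_\varphi$ of $\{1, \dots, n+m\}$ that preserves the heavy/light partition, so $\sigma_\varphi \in \Aut(K_w)$, and it remains to show that $\Phi(\sigma_\varphi)^{-1} \varphi$ is the identity.

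The last step — equivalently, showing that an automorphism of $\Delta_{0,w}$ inducing the trivial permutation of markings is itself trivial — is where I expect the real work to lie, since in genus $0$ there are no positive-genus vertices to provide the rigidity exploited in Theorem \ref{Auts}. Here I would argue by induction on $n + m$: such an automorphism fixes every vertex of $\Delta_{0,w}$ (each vertex is recorded by an unordered partition of the marking set), and the link of a vertex of $\Delta_{0,w}$ is a join of tropical Hassett complexes attached to the two sides of the corresponding two-component curve — each again of heavy/light type with strictly fewer markings (possibly in a degenerate range with too few light markings, to be dispatched using the known automorphisms of $\overline{\mathcal{M}}_{0,k}$ and of the Losev--Manin spaces) — so the inductive hypothesis propagates triviality up through the skeleta of $\Delta_{0,w}$, with the base cases $n + m = 5$, where $\Delta_{0,w}$ is a graph, checked directly. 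This is also the point at which the hypothesis $n + m \geq 5$, rather than merely the existence condition $n + m\varepsilon > 2$, is genuinely needed, exactly as $n \geq 5$ is needed for $\Aut(\overline{\mathcal{M}}_{0,n})$ to be $S_n$.
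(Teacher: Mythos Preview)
Your computation of $\Aut(K_w)$ is correct, and your plan to single out the ``special'' vertices $\mathbf{B}_{\{i,j\}}$ (one heavy, one light marking on one side) and show that any automorphism permutes them is exactly what the paper does. The paper's invariant is simply the valence in the $1$-skeleton: Lemma~\ref{maximize} shows that the special vertices are precisely those with the maximal number of two-edge expansions, hence any $\Psi\in\Aut(\Delta_{0,w})$ permutes them and one reads off $(\sigma,\tau)\in S_m\times S_n$.

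There is, however, a genuine gap in your last step. You write that an automorphism ``inducing the trivial permutation of markings'' (i.e.\ fixing all special vertices) ``fixes every vertex of $\Delta_{0,w}$ (each vertex is recorded by an unordered partition of the marking set)''. The parenthetical is not a proof: an abstract automorphism of the complex does not a priori respect the labelling of vertices by partitions, and knowing it fixes the special $\mathbf{B}_{\{i,j\}}$ does not immediately force it to fix an arbitrary $\mathbf{B}_A$. This is precisely the content of the surjectivity argument in the paper, which shows that $\mathbf{B}_A$ is uniquely determined by the set of special vertices adjacent to it in $\Delta_{0,w}^{(1)}$: the special neighbours of $\mathbf{B}_A$ are the $\mathbf{B}_{\{i,j\}}$ with $(i,j)\in A_L\times A_H$ or $(i,j)\in A_L^c\times A_H^c$, and from this collection of pairs one reconstructs $A$ up to complement. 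Your link-induction proposal presupposes that all vertices are already fixed, so it cannot fill this gap; and once the gap is filled, the induction is unnecessary.

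The reason it is unnecessary is the structural fact you are missing: $\Delta_{0,w}$ is a \emph{flag complex}. The paper deduces this from Buneman's Splits-Equivalence Theorem (Theorem~\ref{Phylo}) together with the observation that a tree in $\Gamma_{0,n}$ lies in $\Gamma_{0,w}$ if and only if each of its one-edge contractions does. Consequently $\Aut(\Delta_{0,w})\cong\Aut(\Delta_{0,w}^{(1)})$, and once every vertex is shown to be fixed by $\Phi_{(\sigma,\tau)}^{-1}\Psi$, the entire automorphism is the identity --- no induction on $n+m$, no analysis of links as joins of smaller Hassett spaces, and no degenerate base cases to dispatch.
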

    
    Heavy/light Hassett spaces are of particular interest: they are also studied in \cite{Bergstrom}, \cite{Chaudhuri}, \cite{KKL}, \cite{Modular}, and \cite{MOP}.

    \begin{figure}[h]
    \centering
    \includegraphics[scale=1.3]{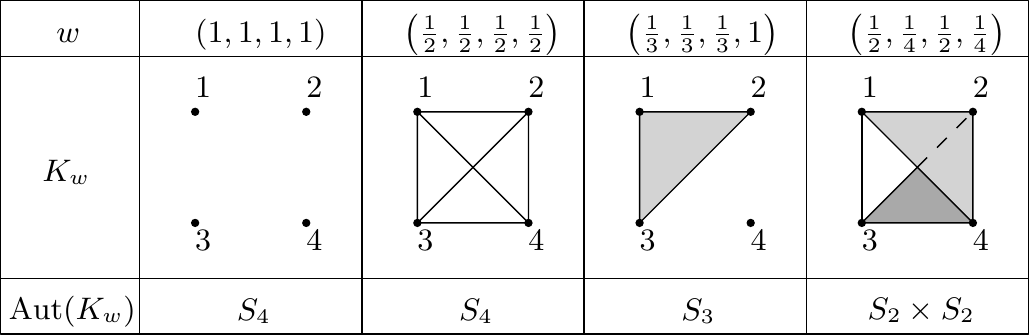}
    \caption{Examples of the simplicial complex $K_w$.}
    \label{WeightComplexExamples}
    \end{figure}
     It is also interesting to characterize the groups $\Aut(K_w)$, as in the following theorem. Since it is independent from the rest of the paper, its proof is found in Appendix \ref{ProofofAllProducts}.
    \begin{thm}\label{AllProductsArise}
    Let $G$ be a group. Then there exists $n \geq 1$ and $w \in \Q^n \cap (0, 1]^n$ such that
    \[\Aut(K_w) \cong G  \]
    if and only if $G$ is isomorphic to the direct product of finitely many symmetric groups.
    \end{thm}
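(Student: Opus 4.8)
The plan is to prove the two directions separately: ``only if'' via a structural description of $\Aut(K_w)$, and ``if'' via an explicit family of weight vectors.

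\textbf{Only if.} Declare $i \sim j$ when the transposition $(i\,j)$ lies in $\Aut(K_w)$; this is an equivalence relation, since $\Aut(K_w)$ is a group and $(i\,k) = (i\,j)(j\,k)(i\,j)$. Let $B_1, \dots, B_r$ be its classes. Transpositions lying inside a single class generate the symmetric group on that class, and transpositions in disjoint classes commute, so $\prod_{s=1}^{r} S_{B_s} \subseteq \Aut(K_w)$. For the reverse containment I would use the \emph{domination} relation: say $i$ dominates $j$ if $(F \smallsetminus \{i\}) \cup \{j\}$ is a face of $K_w$ whenever $F$ is a face containing $i$. This relation is preserved by every simplicial automorphism, being phrased purely in terms of the face poset; a short computation with faces shows it is transitive, hence a preorder; and it contains the total ordering of markings by weight --- if $w_j \leq w_i$ then $i$ dominates $j$, because replacing $i$ by $j$ in a face never raises its weight --- so it is a \emph{total} preorder. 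A further case analysis, on whether a given subset of markings contains $i$, $j$, both, or neither, shows that $i$ and $j$ dominate each other exactly when $i \sim j$; thus the $\sim$-classes are precisely the equivalence classes of this total preorder. Any $\sigma \in \Aut(K_w)$ preserves the preorder, hence permutes the $B_s$ in an order-preserving way, and since a finite linearly ordered set has no nontrivial automorphism, $\sigma$ fixes each $B_s$. Therefore $\Aut(K_w) = \prod_{s=1}^r S_{B_s} \cong \prod_{s=1}^r S_{|B_s|}$, a finite product of symmetric groups.

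\textbf{If.} Write $G \cong S_{n_1} \times \cdots \times S_{n_k}$; discarding trivial factors we may assume each $n_\ell \geq 2$, and when $k \leq 1$ the vectors $w = (1^{(n_1)})$ (or $w = (1)$ if $G$ is trivial) suffice, since $K_{(1^{(n)})}$ is $n$ isolated vertices. For $k \geq 2$, choose distinct rationals $0 < a_1 < \cdots < a_k < 1/4$ and let $w$ have, for each $\ell$, exactly $n_\ell$ coordinates equal to $a_\ell$ --- the $\ell$th \emph{block} $V_\ell$ --- together with $k$ extra \emph{separator} coordinates $\ast_1, \dots, \ast_k$ of weights $1 - a_1, \dots, 1 - a_k$. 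Markings in a common block are equal, so their transpositions are automorphisms and each $V_\ell$ lies inside one $\sim$-class; the work is to check that nothing is further identified. If $i \in V_\ell$ and $j \in V_m$ with $a_\ell < a_m$, then $\{\ast_\ell\}$ is a face avoiding $\{i,j\}$, the set $\{\ast_\ell, i\}$ has weight $1$ and is a face, while $\{\ast_\ell, j\}$ has weight exceeding $1$ and is not; hence $(i\,j)$ fails to be an automorphism, so $i \not\sim j$. Analogous one- and two-element witnesses, built from the smallest and largest block weights, show that each separator is alone in its $\sim$-class, distinct from every block and from every other separator. Thus the $\sim$-classes are exactly $V_1, \dots, V_k$ and the $k$ separator singletons, and by the structural result above $\Aut(K_w) \cong S_{n_1} \times \cdots \times S_{n_k} \cong G$.

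I expect the bookkeeping in the ``if'' direction to be the main obstacle: for each pair of markings placed in different target classes one must produce a subset of the remaining markings whose weight lands in the relevant half-open interval and confirm it survives deleting the two markings in play. This breaks into a handful of boundary cases --- notably a separator tested against a marking of the block to which it is ``tuned'', and the heaviest block --- and it is exactly these cases that force the bounds $0 < a_\ell < 1/4$ and the reduction to all $n_\ell \geq 2$. The ``only if'' direction is routine once the domination preorder is isolated; the points needing care there are the transitivity of domination and the equivalence of mutual domination with $\sim$.
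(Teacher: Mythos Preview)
Your proposal is correct, and both directions differ substantively from the paper's argument.

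For the ``only if'' direction, the paper shows that $\Aut(K_w)$ is generated by transpositions by proving that whenever $\sigma\in\Aut(K_w)$ sends $i$ to $j$, the transposition $(i\,j)$ also lies in $\Aut(K_w)$; the key step is a contradiction producing an infinite strictly increasing chain $w_{\sigma^k(i)}$. Your domination preorder repackages the same weight-monotonicity phenomenon more structurally: because domination is a total preorder preserved by automorphisms, and its equivalence classes are exactly your $\sim$-classes, you get the stronger conclusion $\Aut(K_w)=\prod_s S_{B_s}$ directly, not merely that $\Aut(K_w)$ is \emph{some} product of symmetric groups. This is a genuine gain, since it also feeds cleanly into your ``if'' direction.

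For the ``if'' direction, the paper proceeds by induction on the number of factors, at each step appending either a batch of new ``heavy'' or ``light'' coordinates, or a carefully tuned block $\bigl(1-\varepsilon/n_k,\,(\varepsilon/n_k)^{(n_k)}\bigr)$, after first symmetrizing the existing weights within each $\sim$-class. Your construction is non-inductive: all blocks $V_\ell$ and separators $\ast_\ell$ are laid down at once. The price is the case analysis you flag at the end---in particular, distinguishing $\ast_k$ from an element of $V_k$ genuinely needs a three-element witness and is where both the bound $a_\ell<1/4$ and the reduction to $n_\ell\ge 2$ are used---but all of these witnesses exist as you indicate. The paper's inductive argument trades that case analysis for a symmetrization lemma and a somewhat delicate verification that the appended block does not create unintended automorphisms; neither approach is obviously shorter, but yours is more explicit about the final weight vector.
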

    
    \subsection{Comparison with the algebraic moduli space}
    For $g, n \geq 0$ such that $2g - 2 + n \geq 3$, we have isomorphisms
    \[\Aut(\overline{\mathcal{M}}_{g, n}) \cong \Aut(\Delta_{g, n}) \cong S_n, \]
     following the results of ~\cite{K} and ~\cite{Mass}; here $S_n$ acts by relabelling the marked points. The analogous result cannot be true for general weight vectors. Indeed, ~\cite[Corollary 4.7]{hassett} states that if $w_i \leq w_i'$ for all $i$ and the complexes $K_w, K_{w'}$ coincide outside of their $1$-skeletons, then there is an isomorphism of coarse moduli spaces $\overline{M}_{g, w} \cong \overline{M}_{g, w'}$. Moreover, by ~\cite[Theorem 3.20]{MassMella}, the automorphism groups of the stacks and coarse spaces agree. This implies, for example, that when $w = (1^{(n)}, 1/2^{(m)})$, we have $\Aut(\overline{\mathcal{M}}_{g, w}) \cong S_{n + m}$. On the other hand, Theorem \ref{Auts} states that $\Aut(\Delta_{g, w}) \cong \Aut(K_w) \cong S_n \times S_m$.
    
    In ~\cite{MassMella}, Massarenti and Mella prove that for $g, n \geq 1$ such that $2g + 2 + n \geq 3$, the automorphism group of the moduli stack $\overline{\mathcal{M}}_{g, w}$ is given by the subgroup of $S_n$ generated by \emph{admissible transpositions}. These are transpositions $(i, j)$ such that, for all $S \subseteq \{1, \ldots, n\}$ with $|S| \geq 2$, we have
    \[w_i  + w(S) \leq 1 \iff w_j + w(S) \leq 1, \]
    where for a subset $S \subseteq \{1, \ldots, n\}$ we define
    \[w(S) := \sum_{i \in S} w_i. \]
    The group generated by admissible transpositions acts on $\overline{\mathcal{M}}_{g, w}$ by relabelling the marked points, and contracting rational components which become unstable if necessary. We now show that $\Aut(K_w)$ is the subgroup of $\Aut(\overline{\mathcal{M}}_{g, w})$ which preserves the locus $\partial\overline{\mathcal{M}}_{g, w}$ of singular curves.
    \begin{lem}\label{BoundaryPreserving}
    Suppose $g, n \geq 1$ with $2g - 2 + n \geq 3$, and fix $w \in \Q^n \cap (0, 1]^n$. Then
    \[\Aut(\overline{\mathcal{M}}_{g, w}, \partial\overline{\mathcal{M}}_{g, w}) \cong \Aut(K_w), \]
    where $\Aut(K_w)$ acts by permuting the markings.
    \end{lem}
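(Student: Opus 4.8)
The plan is to realize both sides as subgroups of $S_n$ and compare them directly. By Massarenti--Mella \cite{MassMella}, $\Aut(\overline{\mathcal{M}}_{g,w})$ is the subgroup of $S_n$ generated by admissible transpositions, acting on a curve by relabelling its marked points and then contracting any rational component that has become unstable; and $\Aut(K_w)$ is the subgroup of $\sigma\in S_n$ with $S\in K_w\iff\sigma(S)\in K_w$ for every $S\subseteq\{1,\dots,n\}$. The goal is to show that $\sigma\in\Aut(\overline{\mathcal{M}}_{g,w})$ preserves $\partial\overline{\mathcal{M}}_{g,w}$ exactly when $\sigma\in\Aut(K_w)$; one may assume $n\geq 2$, since both groups are trivial otherwise.

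First I would show that every $\sigma\in\Aut(K_w)$ preserves the boundary. The key point is that for such $\sigma$, relabelling the marked points of a $w$-stable curve by $\sigma$ destabilizes no component, so $\sigma$ acts on $\overline{\mathcal{M}}_{g,w}$ by pure relabelling, with no contraction. Indeed, relabelling preserves the genus, the number of nodes and the number of distinct marked points on each component, and --- since $\sigma\in\Aut(K_w)$ --- it also preserves the Hassett constraint that the markings at any common point form a face of $K_w$, as well as the positivity of each component degree: on a rational tail attached at a single node with marking set $S$ this positivity is precisely the condition $S\notin K_w$, and on a component of genus $\geq 1$, or a rational component with a different number of nodes, positivity is automatic (using $g\geq 1$) or holds for an equally trivial reason. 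Hence $\sigma$ carries every curve to an isomorphic relabelled curve, so it preserves the smooth locus $\mathcal{M}_{g,w}$, and therefore also $\partial\overline{\mathcal{M}}_{g,w}$.

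For the converse, suppose $\sigma\in\Aut(\overline{\mathcal{M}}_{g,w})$ with $\sigma\notin\Aut(K_w)$. Since $\Aut(K_w)$ is a group and $\sigma^{-1}$ preserves the boundary if $\sigma$ does, after possibly replacing $\sigma$ by $\sigma^{-1}$ I may choose $T\subseteq\{1,\dots,n\}$ with $w(T)>1$ but $w(\sigma(T))\leq 1$; as all weights are at most $1$, necessarily $|T|\geq 2$. I would then build a boundary curve that $\sigma$ pushes off the boundary: let $C=C_0\cup E$, with $C_0$ smooth of genus $g$, $E\cong\P^1$ meeting $C_0$ in one node, the markings indexed by $T$ placed at $|T|$ distinct points of $E$, and the remaining markings at general points of $C_0$. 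This is $w$-stable, since the degree of $K_C+\sum w_ip_i$ along $E$ equals $w(T)-1>0$ and $g\geq 1$ makes $C_0$ stable, and it is singular, so $[C]\in\partial\overline{\mathcal{M}}_{g,w}$. Applying $\sigma$ relabels the markings on $E$ by $\sigma$, after which the degree along $E$ is $w(\sigma(T))-1\leq 0$, so $E$ must be contracted; the resulting stable model is the smooth curve $C_0$ carrying the markings $\sigma(T)$ clustered at the former node, which is a legitimate point of $\mathcal{M}_{g,w}$ precisely because $w(\sigma(T))\leq 1$. This contradicts that $\sigma$ preserves $\partial\overline{\mathcal{M}}_{g,w}$; hence $\sigma\in\Aut(K_w)$, and the two inclusions give the isomorphism.

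I expect the main obstacle to be the first direction: one has to check carefully that the condition defining $\Aut(K_w)$ --- which is the admissibility condition of \cite{MassMella} strengthened to hold on singletons and the empty set as well --- is exactly what rules out every way relabelling could force a contraction, so that for $\sigma\in\Aut(K_w)$ there is genuinely nothing to contract. Once that component-by-component check is in place, the second direction is a brief explicit construction and the pieces fit together at once.
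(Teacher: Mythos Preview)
Your proposal is correct and follows essentially the same approach as the paper: both directions are handled by the same mechanisms (no contraction needed for $\sigma\in\Aut(K_w)$; an explicit $\mathbb{P}^1$-tail that gets contracted when $\sigma\notin\Aut(K_w)$). The only cosmetic difference is that the paper exploits the definition of admissible transposition to note that any $\sigma\notin\Aut(K_w)$ already fails on a \emph{two}-element set $S$, so their rational tail carries exactly two markings, whereas you allow an arbitrary $T$ with $|T|\ge 2$; your version works just as well and avoids invoking that extra observation.
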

    \begin{proof}
   Suppose first that $\sigma$ is in the subgroup of $S_n$ generated by admissible transpositions but $\sigma \notin \Aut(K_w)$. Then there exists some $S \subseteq \{1, \ldots, n\}$ with $|S| = 2$, $w(S) > 1$, but $w(\sigma(S)) \leq 1$; say $S = \{i, j \}$. Consider a pointed nodal curve $(C, p_1, \ldots, p_n)$ of arithmetic genus $g$ with two irreducible components $T_1, T_2$, so that $T_2$ is isomorphic to $\P^1$ and supports the marked points $p_i, p_j$, while the other marked points are distributed distinctly on $T_1$. Then $\sigma \cdot (C, p_1, \ldots, p_n)$ is obtained from $(C, p_1, \ldots, p_n)$ by first permuting the marked points according to $\sigma$, and then contracting the component $T_2$ to a point so that $p_{\sigma(i)} = p_{\sigma(j)}$ (this is necessary because $w_{\sigma(i)} + w_{\sigma(j)} \leq 1$). In particular $\sigma \cdot (C, p_1, \ldots, p_n)$ is no longer a singular curve. This shows that $\Aut(\overline{\mathcal{M}}_{g, w}, \partial\overline{\mathcal{M}}_{g, w})$ is a subgroup of $\Aut(K_w)$; to finish, we simply note that when applying $\sigma \in \Aut(K_w)$ to a nodal curve $(C, p_1, \ldots, p_n)$, there is never a need to contract any components, so $\Aut(K_w)$ preserves the boundary.
    \end{proof}
    The simplicial complexes $K_w$ correspond to the chambers of the \textit{fine chamber decomposition} of ~\cite[Section 5]{hassett} (see also \cite[Section 2]{alexeev}).
    
    In general, if $\mathcal{D}$ is a normal crossings divisor on a variety or DM stack $\mathcal{X}$, one has a homomorphism
    \[\Aut(\mathcal{X}, \mathcal{D}) \to \Aut(\Delta(\mathcal{X}, \mathcal{D})), \]
    where $\Delta(\mathcal{X}, \mathcal{D})$ is the dual complex of $\mathcal{D}$ in $\mathcal{X}$. Given Lemma \ref{BoundaryPreserving}, the upshot of Theorem \ref{Auts} is that this map is an isomorphism if we specialize to $\mathcal{D} = \partial \overline{\mathcal{M}}_{g, w}$ and $\mathcal{X} = \overline{\mathcal{M}}_{g, w}$:
    \begin{cor}
    Suppose $g, n \geq 1$ with $2g - 2 + n \geq 3$, and $w \in \Q^n \cap (0, 1]^n$. Then the map
    \[\Aut(\overline{\mathcal{M}}_{g, w}, \partial\overline{\mathcal{M}}_{g, w}) \to \Aut(\Delta_{g, w}) \]
    is an isomorphism.
    \end{cor}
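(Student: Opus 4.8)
The plan is to observe that both groups in the statement have already been computed to be $\Aut(K_w)$ — the source by Lemma~\ref{BoundaryPreserving}, the target by Theorem~\ref{Auts} together with Ulirsch's identification $\Delta_{g, w} = \Delta(\overline{\mathcal{M}}_{g, w}, \partial\overline{\mathcal{M}}_{g, w})$ — and that both identifications are realized by the action of $\Aut(K_w) \subseteq S_n$ relabeling marked points. So the real content is to check that the functorial map of the corollary is compatible with these two descriptions, and then conclude by a two-out-of-three argument.

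First I would recall the general construction of the map. For a normal crossings divisor $\mathcal{D}$ on a DM stack $\mathcal{X}$, an automorphism $\varphi \in \Aut(\mathcal{X}, \mathcal{D})$ permutes the locally closed strata of the induced stratification, respecting their incidences and the monodromy actions on the branches of $\mathcal{D}$ along each stratum; by the functoriality of the boundary-complex construction of \cite{CGP1} this produces an automorphism of the symmetric $\Delta$-complex $\Delta(\mathcal{X}, \mathcal{D})$, and the assignment $\varphi \mapsto \Delta(\varphi)$ is the homomorphism appearing in the corollary. Specializing to $\mathcal{X} = \overline{\mathcal{M}}_{g, w}$ and $\mathcal{D} = \partial\overline{\mathcal{M}}_{g, w}$, the strata are indexed by $w$-stable weighted graphs and the monodromy is by graph automorphisms, so $\Delta(\overline{\mathcal{M}}_{g, w}, \partial\overline{\mathcal{M}}_{g, w})$ is precisely $\Delta_{g, w}$ built from the category of such graphs.

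Next I would trace this construction through for $\varphi = \sigma$, the automorphism of $\overline{\mathcal{M}}_{g, w}$ given by $\sigma \in \Aut(K_w) \subseteq S_n$ acting by relabeling the markings; by Lemma~\ref{BoundaryPreserving} this $\sigma$ lies in $\Aut(\overline{\mathcal{M}}_{g, w}, \partial\overline{\mathcal{M}}_{g, w})$, since no rational components ever need to be contracted. On boundary strata $\sigma$ acts by relabeling the legs of the dual weighted graph of a $w$-stable curve, and under the identification above this is exactly the automorphism of $\Delta_{g, w}$ obtained by relabeling the legs of tropical curves. In other words, the composite
\[
\Aut(K_w) \hookrightarrow \Aut(\overline{\mathcal{M}}_{g, w}, \partial\overline{\mathcal{M}}_{g, w}) \xrightarrow{\ \Delta(\,\cdot\,)\ } \Aut(\Delta_{g, w})
\]
is precisely the marking action featured in Theorem~\ref{Auts}. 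Hence in the resulting commutative triangle with vertices $\Aut(K_w)$, $\Aut(\overline{\mathcal{M}}_{g, w}, \partial\overline{\mathcal{M}}_{g, w})$, and $\Aut(\Delta_{g, w})$, the edge from $\Aut(K_w)$ to the source is an isomorphism by Lemma~\ref{BoundaryPreserving} and the edge from $\Aut(K_w)$ to the target is an isomorphism by Theorem~\ref{Auts}; two-out-of-three then forces the remaining edge — the map of the corollary — to be an isomorphism. (Note that the hypotheses $g, n \geq 1$ and $2g - 2 + n \geq 3$ are exactly what is needed to invoke both results.)

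The main obstacle is the equivariance statement underlying the second step: one must verify that Ulirsch's identification $\Delta(\overline{\mathcal{M}}_{g, w}, \partial\overline{\mathcal{M}}_{g, w}) \cong \Delta_{g, w}$ intertwines the abstractly-defined action of $\Aut(\overline{\mathcal{M}}_{g, w}, \partial\overline{\mathcal{M}}_{g, w})$ on the dual complex with the combinatorial marking action on $\Delta_{g, w}$. This reduces to the naturality of the \cite{CGP1} boundary-complex functor with respect to the $S_n$-action and to the fact that both sides are assembled functorially from the category of $w$-stable weighted graphs with leg-relabeling morphisms; granting this compatibility, the rest of the argument is formal.
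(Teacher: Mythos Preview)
Your proposal is correct and is essentially the same argument the paper has in mind: the corollary is stated immediately after Lemma~\ref{BoundaryPreserving} as its ``upshot'' in light of Theorem~\ref{Auts}, with no further proof given. You have simply made explicit the two-out-of-three triangle and the equivariance check that the paper leaves tacit.
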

    We can also give a sufficient condition for the groups $\Aut(\overline{\mathcal{M}}_{g, w})$ and $\Aut(K_w)$ to coincide. Recall that a \textit{facet} of a simplicial complex is a face that is maximal with respect to inclusion.
    \begin{cor}\label{AlgebraicVTropical}
    Suppose $g, n \geq 1$ with $2g - 2 + n \geq 3$, and $w \in \Q^n \cap (0, 1]^n$. Then, if $K_w$ has no $1$-dimensional facets, then
    \[ \Aut(\overline{\mathcal{M}}_{g, w}) = \Aut(K_w).  \]
    \end{cor}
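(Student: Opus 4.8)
The plan is to combine the Massarenti--Mella description of $\Aut(\overline{\mathcal{M}}_{g,w})$ as the subgroup of $S_n$ generated by admissible transpositions with Lemma \ref{BoundaryPreserving}. One inclusion is immediate: since $\Aut(\overline{\mathcal{M}}_{g,w}, \partial\overline{\mathcal{M}}_{g,w})$ is by definition a subgroup of $\Aut(\overline{\mathcal{M}}_{g,w})$, Lemma \ref{BoundaryPreserving} already yields $\Aut(K_w) \subseteq \Aut(\overline{\mathcal{M}}_{g,w})$. For the reverse inclusion it suffices to show that, when $K_w$ has no $1$-dimensional facets, every admissible transposition $(i,j)$ defines a simplicial automorphism of $K_w$; the subgroup of $S_n$ these transpositions generate, namely $\Aut(\overline{\mathcal{M}}_{g,w})$, is then contained in $\Aut(K_w)$.

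The first step is a reduction of the claim ``$(i,j) \in \Aut(K_w)$'' to a statement about edges. Since $(i,j)$ fixes every subset of $\{1, \dots, n\}$ that either misses both $i$ and $j$ or contains both, the transposition $(i,j)$ lies in $\Aut(K_w)$ if and only if $w_i + w(T) \leq 1 \iff w_j + w(T) \leq 1$ for every $T \subseteq \{1, \dots, n\} \smallsetminus \{i,j\}$. When $|T| \geq 2$ this is precisely the defining condition of an admissible transposition, and when $T = \emptyset$ it holds trivially because $w_i, w_j \leq 1$. So the only content is the case $T = \{k\}$: one must show that for each $k \notin \{i,j\}$ we have $w_i + w_k \leq 1 \iff w_j + w_k \leq 1$, equivalently that $\{i,k\}$ is an edge of $K_w$ if and only if $\{j,k\}$ is.

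The hypothesis that $K_w$ has no $1$-dimensional facets is used exactly here. Suppose $\{i,k\} \in K_w$; I claim $\{j,k\} \in K_w$. Since $\{i,k\}$ is not a facet, it is contained in a triangle $\{i,k,\ell\}$ with $\ell \notin \{i,k\}$. If $\ell = j$, then $w_j + w_k \leq w_i + w_j + w_k \leq 1$, so $\{j,k\} \in K_w$. If $\ell \neq j$, then $\{k,\ell\}$ has two elements and $w_i + w(\{k,\ell\}) = w_i + w_k + w_\ell \leq 1$, so admissibility of $(i,j)$ forces $w_j + w_k + w_\ell \leq 1$, and hence $w_j + w_k \leq 1$ since $w_\ell > 0$. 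By the symmetry of the condition in $i$ and $j$, the converse holds as well, so $(i,j) \in \Aut(K_w)$ and the proof concludes. I do not anticipate a genuine obstacle: the only subtlety is the careful bookkeeping of the cases $|T| \leq 1$ and $\ell = j$, which is exactly where the ``no $1$-dimensional facets'' hypothesis does its work. It is worth recording that this hypothesis cannot be dropped --- for $w = (1/2, 1/2, 1)$ and any $g \geq 1$, the complex $K_w$ has the $1$-dimensional facet $\{1,2\}$, and $\Aut(K_w) \cong S_2 \subsetneq S_3 \cong \Aut(\overline{\mathcal{M}}_{g,w})$.
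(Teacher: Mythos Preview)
Your proof is correct and follows essentially the same approach as the paper: both reduce to showing that every admissible transposition lies in $\Aut(K_w)$, handle the case $|S|\geq 3$ (equivalently $|T|\geq 2$) directly from the definition of admissibility, and use the ``no $1$-dimensional facets'' hypothesis to extend a $1$-simplex to a larger face in the remaining case. Your write-up is slightly more careful in two places---you make the easy inclusion $\Aut(K_w)\subseteq\Aut(\overline{\mathcal{M}}_{g,w})$ explicit via Lemma~\ref{BoundaryPreserving}, and you separately treat the possibility $\ell=j$ when extending the edge $\{i,k\}$---but these are minor expository differences rather than a different strategy.
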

    \begin{proof}
    It suffices to show that every admissible transposition $\tau = (i, j)$ is in $\Aut(K_w)$. Suppose $S \subseteq \{1, \ldots, n \}$ satisfies $w(S) \leq 1$, in order to show that $w(\tau(S)) \leq 1$. If both $i, j \in S$ or $i, j \in S^c$, then $w(\tau(S)) = w(S)$, so we suppose without loss of generality that $i \in S$ while $j \in S^c$. Then, if $|S| \geq 3$, by the definition of admissible transposition, we must have $w(\tau(S)) \leq 1$. If $|S| = 2$, then $S$ forms a $1$-simplex of $K_w$, and cannot be a facet. Thus there exists some $T \subseteq \{1, \ldots, n \}$ such that $S \subsetneq T$ and $w(T) \leq 1$. Since $|T| \geq 3$, we have $w(\tau(T)) \leq 1$, and $w(\tau(S)) < w(\tau(T))$, so $w(\tau(S)) \leq 1$ and $\tau \in \Aut(K_w)$, finishing the proof.
    \end{proof}
    The condition of Corollary \ref{AlgebraicVTropical} is sufficient but not necessary: indeed, if $w = (1/2^{(n)})$, then $\Aut(\overline{\mathcal{M}}_{g, w}) \cong \Aut(K_w) \cong S_n$, but $K_w$ is the complete graph on $n$ vertices, so all of its facets are $1$-dimensional.

    \subsection{Tropical Hassett spaces excluded by Theorem \ref{Auts}} When $g \geq 1$, the space $\Delta_{g, w}$ is nonempty as long as $3g - 3 + n > 0$, so the positive genus cases not covered by Theorem \ref{Auts} are $(g, n) = (1, 1), (1, 2)$. When $n = 1$ we have $\Delta_{1, w} = \Delta_{1, 1}$ for any $w$, and this space is a single point, so the automorphism group is trivial. When $n = 2$, so $w = (w_1, w_2)$, we have $\Delta_{1, w} \cong \Delta_{1, 2}$ if $w_1 + w_2 > 1$, so in this case the automorphism group is trivial by ~\cite[Example 2.19]{K}. When $w_1 + w_2 \leq 1$, $\Delta_{1, w}$ will be shown to be trivial in Example \ref{ExceptionalCase}.
    
    \subsection{Related work} 
    In the special case $w = (1^{(n)})$, the automorphism group of $\Delta_{g, w}$ is known to be $S_n$: this is due to Abreu and Pacini ~\cite{AbreuPacini} when $g = 0$, and to the third author ~\cite{K} in arbitrary genus. Indeed, one of the main technical theorems in ~\cite{K} is also the driving force behind the calculation in the current paper.
    
    The topology of $\Delta_{g, w}$ was studied for $g \leq 1$ by Cerbu \textit{et al.} in ~\cite{CMPRS}. When $g = 0$ and the weight vector $w$ has at least two entries equal to $1$, the space $\Delta_{0, w}$ has the homotopy type of a wedge of spheres, possibly of varying dimension. Closed formulas for the number of spheres are known when $w$ is heavy/light. In higher genus, the topology of $\Delta_{g, w}$ has been partially explored by Li, Serpente, Yun, and the third author in ~\cite{kannan2020topology}. When $g \geq 1$, and for any value of $w$, the space $\Delta_{g, w}$ is shown to be simply-connected. Formulas for the Euler characteristic of $\Delta_{g, w}$ in terms of the combinatorics of the complex $K_w$ have also been derived. 
    
    The cone complexes $M_{0,w}^{\trop}$ were studied in the context of tropical compactification in \cite{CHMR2014moduli}. The authors showed that the complex $M_{0, w}^\trop$ can be embedded as a balanced fan $\Sigma_{0, w}$ in a real vector space if and only if $w$ is heavy/light. In the heavy/light case, they show that the locus $\mathcal{M}_{0, w}$ embeds into the toric variety $X(\Sigma_{0, w})$, in such a way that taking the closure of the image gives Hassett's original compactification. This procedure gives an isomorphism of Chow rings $A^*(\overline{\mathcal{M}}_{0, w}) \cong A^*(X(\Sigma_{0, w}))$, allowing for the computation of $A^*(\overline{\mathcal{M}}_{0, w})$ carried out in ~\cite{KKL}.
    

    \subsection{Acknowledgements}
    JH was supported by a BrownConnect Collaborative SPRINT Award. SK was supported by an NSF Graduate Research Fellowship.

    \section{Graphs and $\Delta_{g, w}$}\label{construction}

    We first recall the category $\Gamma_{g, n}$ of weighted stable graphs of genus $g$; see ~\cite[\S 2.1]{CGP2} or ~\cite[\S 2.1]{K} for a precise definition. An object of $\Gamma_{g, n}$ is a triple $\G = (G, h, m)$ where $G$ is a finite connected graph, while $h: V(G) \to \Z_{\geq 0}$ and $m: \{1,\ldots, n\} \to V(G)$ are functions; these three data are required to satisfy
    \[b^1(G) + \sum_{v \in V(G)} h(v) = g, \]
    and
    \[2 h(v) - 2 + \val(v) + |m^{-1}(v)| > 0  \]
    for all $v \in V(G)$. In the above, $b^1(G) = |E(G)| - |V(G)| + 1$ denotes the first Betti number of $G$, and $\val(v)$ denotes the valence of the vertex $v$, which is the number of half-edges emanating from $v$. A \textit{\textbf{morphism}} of weighted stable graphs of genus $g$ is a composition of isomorphisms and edge-contractions. Given a morphism $\varphi: \G \to \G'$ in $\Gamma_{g, n}$, each edge in $\G'$ has a unique preimage in $\G$. We write $\varphi^*: E(\G') \to E(\G)$ for the induced map of sets.
    \begin{defn}
    Given $w \in \Q^n \cap (0, 1]^n$, say $\G \in \mathrm{Ob}(\Gamma_{g, n})$ is \textit{\textbf{$w$-stable}} if for all $v \in V(\G)$, we have
    \[ 2 h(v) - 2 + \val(v) + w(m^{-1}(v)) > 0. \]
    We write $\Gamma_{g, w}$ for the full subcategory of $\Gamma_{g, n}$ whose objects are those which are $w$-stable.
    \end{defn}
    We remark that when $w = (1^{(n)})$, we have $\Gamma_{g, w} = \Gamma_{g,n}$. As in \cite{K}, it is useful to define an auxiliary groupoid $\Gamma_{g, w}^{\mathrm{EL}}$, whose objects are edge-labelled $w$-stable graphs of genus $g$.
    \begin{defn}
    Define $\Gamma_{g, w}^{\mathrm{EL}}$ to be the groupoid of pairs $(\G, \tau)$ where $\G \in \mathrm{Ob}(\Gamma_{g,w})$ and $\tau: E(\G) \to [p]$ is a bijection, where for an integer $p \geq 0$, we define
    \[[p] = \{0, \ldots, p\}. \]
    An isomorphism of pairs $\varphi: (\G, \tau) \to (\G', \tau')$ is an isomorphism $\G \to \G'$ such that the diagram
    \[
    \begin{tikzcd}
    &E(\G') \arrow[rr, "\varphi^*"] \arrow[dr, "\tau'"] & &E(\G) \arrow[dl, "\tau"'] \\
    & &\lbrack p \rbrack &
    \end{tikzcd}
    \]
    commutes.
    \end{defn}
    
    An open problem in graph theory is to classify those graphs which are determined, up to isomorphism, by their deck of edge-contractions. The reader may consult the thesis of Antoine Poirier ~\cite{Poirier} for a thorough overview of this problem. The main technical tool of this paper is a solution to an easier version of this problem for the categories $\Gamma_{g, w}^{\mathrm{EL}}$. Given $(\G, \tau : E(\G) \to [p]) \in \mathrm{Ob}(\Gamma_{g, w}^{\mathrm{EL}})$ and $i \in [p]$, we set $e_i = \tau^{-1}(i) \in E(\G)$, and put $\tau_i : E(\G) \to [p]$ for the unique edge-labelling making the diagram 
     \[
    \begin{tikzcd}
    &E(\G/e_i) \arrow[r, "c_i^*"] \arrow[d, "\tau_i"] &E(\G) \arrow[d, "\tau"] \\
    &\lbrack p - 1 \rbrack \arrow[r, "\delta^i"] & \lbrack p \rbrack
    \end{tikzcd}
    \]
    commute, where $c_i: \G \to \G/e_i$ is the contraction of edge $e_i$ and $\delta^i: [p - 1] \to [p]$ is the unique order-preserving injection whose image does not contain $i$.
    \begin{defn}
    Let $(\G, \tau) \in \mathrm{Ob}(\Gamma_{g, w}^{\mathrm{EL}})$. We define the \textit{\textbf{nonloop contraction deck}} of $(\G, \tau)$ to be the set of pairs
    \[\D^\G_{\tau} := \{((\G/e_i, \tau_i), i) \mid e_i \text{ is not a loop of }\G \} \subseteq \mathrm{Ob}(\Gamma_{g, w}^{\mathrm{EL}}) \times [p].  \]
    \end{defn}
    Given two lists $\D_1 = \{((\G_i, \tau_i), i) \mid i \in J_1 \}, \D_2 = \{((\H_i, \pi_i), i) \mid i \in J_2 \}$ of $\Gamma_{g, w}^{\mathrm{EL}}$ objects indexed by $J_1, J_2 \subseteq [p]$, we write $\D_1 \cong \D_2$ if $J_1 = J_2$, and $(\G_i, \tau_i) \cong (\H_i, \pi_i)$ for all $i \in J_1$.
    \begin{thm}\label{Reconstruction}
    Suppose $(\G, \tau), (\G', \tau') \in \mathrm{Ob}(\Gamma_{g, w}^{\mathrm{EL}})$ with $b_1(\G) = b_1(\G') = g$. Suppose further that $\D^{\G}_{\tau} \cong \D^{\G'}_{\tau'}$ and that $|V(\G)| = |V(\G')| \geq 3$. Then $(\G, \tau) \cong (\G', \tau')$.
    \end{thm}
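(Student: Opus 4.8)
The plan is to show that the labeled graph $(\G, \tau)$ is reconstructible, up to isomorphism in $\Gamma^{\mathrm{EL}}_{g,w}$, from the single datum $\D^\G_\tau$; applying such a reconstruction recipe to both $\G$ and $\G'$ then forces $(\G, \tau) \cong (\G', \tau')$.

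First I would record the immediate consequences of the hypotheses. From $b_1(\G) = b_1(\G') = g$ and $|V(\G)| = |V(\G')|$, together with $|E| = b_1 + |V| - 1$, we get $|E(\G)| = |E(\G')| = p+1$, so both edge sets are identified with $[p]$ by $\tau$ and $\tau'$. The index set $J \subseteq [p]$ of the list $\D^\G_\tau$ is exactly $\{\, i : e_i \text{ is not a loop of }\G\,\}$, and likewise for $\G'$; since $\D^\G_\tau \cong \D^{\G'}_{\tau'}$ these index sets coincide. Hence a given label names a loop of $\G$ if and only if it names a loop of $\G'$, and for every $i \in J$ we are handed a label-preserving isomorphism $\psi_i \colon \G/e_i \to \G'/e'_i$. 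These $\psi_i$ need not be induced by one common isomorphism $\G \to \G'$, so rather than glue them I would instead prove that the labeled graph $\G$ is determined, up to label-preserving isomorphism, by $\D^\G_\tau$ alone.

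The core of the proof is this recovery, organized as: first reconstruct the vertex set with all of its local data (genus, marking set, and the labeled multiset of loops at each vertex), then reconstruct the endpoints of every non-loop edge. For $i \in J$ write $e_i = \{u_i, v_i\}$. In the card $\G/e_i$, every vertex of $\G$ other than $u_i, v_i$ survives verbatim, carrying all of its local data and the labels of the non-loop edges at it; since edge labels persist under contraction, a vertex seen in two such cards is recognized as the same one. Thus every vertex that fails to be incident to some $e_i$ with $i \in J$ is directly visible. The only vertices not captured this way are those meeting every non-loop edge; but if both endpoints of some $e_i$ had that property, then every non-loop edge would be parallel to $e_i$ and $\G$ would have only two vertices, contradicting $|V(\G)| \ge 3$. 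So for such a vertex $v = v_i$ the other endpoint $u_i$ is already recovered, and one reads off $v$ by subtraction inside the merged vertex $w_i$ of $\G/e_i$: the loops at $w_i$ whose labels lie in $J$ are precisely the edges of $\G$ parallel to $e_i$ (another use of the labels), the remaining loops at $w_i$ split as the loops at $u_i$ together with those at $v_i$, and likewise for genus and markings, so removing the contribution of $u_i$ isolates $v$. Finally, each $i \in J$ joins the two recovered vertices into which $w_i$ splits in $\G/e_i$, while each loop is placed at its vertex by its label; this pins down $\G$. Running the same recipe on $\D^{\G'}_{\tau'}$ then yields the desired isomorphism $(\G, \tau) \cong (\G', \tau')$.

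The step I expect to be the genuine obstacle is making ``recognize the same vertex across cards'' and the subtraction argument watertight, in particular for apex-type graphs (stars, graphs in which almost all edges are parallel, or graphs with only two non-loop edge classes), where few or no cards retain a vertex verbatim and one must chase the label bookkeeping carefully. This is exactly where the hypothesis $|V(\G)| \ge 3$ is indispensable: with at most two vertices, contracting the unique non-loop class collapses the graph too far for any card to encode the splitting, and the reconstruction — hence the conclusion of the theorem — fails. I would also expect to lean on $w$-stability to exclude a few degenerate local configurations and to dispose of a short list of minimal cases by direct inspection.
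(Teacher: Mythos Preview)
The paper does not prove this theorem from scratch: its entire proof is a two-line reduction. For $w=(1^{(n)})$ the statement is Theorem~4.2 of \cite{K}, and for general $w$ one simply observes that $\Gamma_{g,w}^{\mathrm{EL}}$ is a full subcategory of $\Gamma_{g,n}^{\mathrm{EL}}$, so an isomorphism in the larger category between two $w$-stable objects is automatically an isomorphism in the smaller one. You missed this easy reduction; noticing it would have let you invoke the $w=(1^{(n)})$ result rather than reprove it. In particular, your expectation that $w$-stability will be needed to ``exclude degenerate local configurations'' is misplaced: the weight vector plays no role beyond restricting to a full subcategory.

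What you have sketched is essentially the content of the cited theorem in \cite{K}, and your outline is on the right track: reconstruct the labelled vertex set first, then place the non-loop edges. Your use of $|V(\G)|\ge 3$ to guarantee that at most one vertex meets every non-loop edge, and the subtraction argument for that vertex, are both correct. The real soft spot is the step you flag yourself: ``recognize the same vertex across cards.'' You assert that an unmerged vertex is identified by its local data (marking set, loop labels, incident non-loop labels), but you never explain how, looking at a single card $\G/e_i$, one distinguishes the merged vertex $w_i$ from the genuine ones; nothing in the card itself marks it, and in degenerate configurations the profile of $w_i$ can coincide with that of an unrelated true vertex or with the merged vertex of another card. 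The proof in \cite{K} does resolve this, but it takes more bookkeeping than your sketch suggests, and your plan to ``chase the label bookkeeping carefully'' is where the actual work lies.
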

    \begin{proof}
    In the case $w = (1^{(n)})$, this is Theorem 4.2 in ~\cite{K}. The case of general $w$ follows from this one, as $\Gamma_{g, w}^{\mathrm{EL}}$ may be identified with a full subcategory of $\Gamma_{g, n}^{\mathrm{EL}}$.
    \end{proof}
    \subsection{Description of $\Delta_{g, w}$ as a functor}
    
    We will calculate $\Aut(\Delta_{g, w})$ in the category of \textit{symmetric $\Delta$-complexes}, as introduced by Chan, Galatius, and Payne ~\cite{CGP1}. Put $\mathrm{I}$ for the category whose objects are the sets $[p]$ for each $p \geq 0$, and whose morphisms are all injections.
    \begin{defn}
    A \textbf{\textit{symmetric $\Delta$-complex}} is a functor $X: \mathrm{I}^{\mathrm{op}} \to \mathsf{Set}$.
    \end{defn}
    A morphism of symmetric $\Delta$-complexes is a natural transformation of functors. A symmetric $\Delta$-complex $X: \mathrm{I}^{\mathrm{op}} \to \mathsf{Set}$ should be thought of as a set of combinatorial gluing instructions for a topological space $|X|$. There is a \textit{\textbf{geometric realization functor}} given by $X \mapsto |X|$; see ~\cite{CGP1}, ~\cite{K}, or ~\cite{kannan2020topology} for a description of this functor.
    
    The symmetric $\Delta$-complex description of $\Delta_{g, w}$ is as follows: for each $p \geq 0$, we let \[\Delta_{g, w}([p]) = \{(\G, \tau) \in \pi_0(\Gamma_{g, w}^{\mathrm{EL}}) \mid |E(\G)| = p + 1 \}, \]
    where $\pi_0$ denotes the set of isomorphism classes. We put $[\G, \tau]$ for the equivalence class of a $\Gamma_{g, w}^{\mathrm{EL}}$-object $(\G, \tau)$, and will hereafter shorten $\Delta_{g, w}([p])$ to $\Delta_{g,w}[p]$. Given an injection $\iota: [p] \to [q]$, we define $\iota^* = \Delta_{g, w}(\iota): \Delta_{g, w}[q] \to \Delta_{g, w}[p]$ as follows: if $[\G, \tau] \in \Delta_{g, w}[q]$, then $\iota^*[\G, \tau]$ is the edge-labelled graph obtained by contracting all edges in $\G$ which are not labelled by the image of $\iota$, and then taking the induced labelling of the remaining edges which preserves their $\tau$-ordering. 
    
    \subsection{Automorphisms of $\Delta_{g, w}$ and the filtration by number of vertices}
    An automorphism of $\Delta_{g, w}$ is a natural isomorphism $\Delta_{g, w} \to \Delta_{g, w}$. To unpack this, we will identify a generating set for the morphisms in the category $\mathrm{I}$. For $p \geq 0$, put
    \[ \SS_{p + 1} := \mathrm{Hom}_{\mathrm{I}}([p], [p]),  \]
    so $\SS_{p + 1}$ is the group of permutations of the set $\{0, \ldots, p\}$. Given $\alpha \in \SS_{p + 1}$, we write $\alpha^* = \Delta_{g, w}(\alpha)$. Next, for each $i \in [p + 1]$, we put $\delta^{i}: [p] \to [p + 1]$ for the unique order-preserving injection whose image does not contain the element $i$. We put $d_i : = \Delta_{g, w}(\delta^i)$. It is apparent that any morphism $\iota: [p] \to [q]$ in the category $\mathrm{I}$ can be factored as a sequence of maps of the form $\delta^i$, followed by some element of $\SS_{q + 1}$. 
    
    An automorphism of $\Delta_{g, w}$ can therefore be understood as the data of bijections \[\Phi = \{\Phi_p: \Delta_{g, w}[p] \to \Delta_{g, w}[p]\}_{p \geq 0}, \] such that the diagrams
    \begin{equation}\label{Symmetric}
    \begin{tikzcd}
    &\Delta_{g,w}[p] \arrow[r, "\Phi_{p}"] \arrow[d, "\alpha^*"] &\Delta_{g, w}[p] \arrow[d, "\alpha^*"]\\
    &\Delta_{g, w}[p] \arrow[r, "\Phi_p"]  &\Delta_{g, w}[p]
    \end{tikzcd} 
    \end{equation}
    and
    \begin{equation}\label{Simplicial}
    \begin{tikzcd}
    &\Delta_{g,w}[p + 1] \arrow[r, "\Phi_{p + 1}"] \arrow[d, "d_i"] &\Delta_{g, w}[p + 1] \arrow[d, "d_i"]\\
    &\Delta_{g, w}[p] \arrow[r, "\Phi_p"]  &\Delta_{g, w}[p]
    \end{tikzcd} 
    \end{equation}
    commute for all $\alpha \in \SS_{p + 1}$ and $i \in [p + 1]$. We shall suppress the subscript and write $\Phi[\G, \tau]$ for $\Phi_p[\G, \tau]$.
    
    \begin{notn}
    Suppose $(\G, \tau) \in \mathrm{Ob}(\Gamma_{g, w}^{\mathrm{EL}})$, and that we have
    \[\Phi[\G, \tau] = [\G', \tau']. \]
    Then, for any $\alpha \in \SS_{p + 1}$, we must have 
    \[\Phi[\G, \tau \circ \alpha ] = [\G', \tau' \circ \alpha] \]
    by the commutativity of (\ref{Symmetric}). So, the action of $\Phi$ on one edge-labelling of $\G$ determines the action on all edge-labellings. We use the notation $(\Phi\G, \Phi\tau) := (\G', \tau')$; the graph $\Phi \G$ is determined up to isomorphism in $\Gamma_{g, w}$.
    \end{notn}
    
    \begin{rem}
    The group $S_n$ acts on $\Gamma_{g, n}$: if we are given $\G = (G, h, m) \in \mathrm{Ob}(\Gamma_{g, n})$, we put $\sigma \cdot \G = (G, h, m \circ \sigma^{-1})$; in this way the edges and vertices of $\G$ and $\sigma \cdot \G$ are identified, so that whenever the marking $i \in \{1, \ldots, n\}$ is supported on vertex $v$ in $\G$, the marking $\sigma(i)$ is supported on $v$ in $\sigma \cdot \G$. A given permutation $\sigma$ preserves the subcategory $\Gamma_{g, w}$ if and only if $\sigma \in \Aut(K_w)$. This gives the action of $\Aut(K_w)$ on $\Delta_{g, w}$ by automorphisms: $\sigma \cdot [\G, \tau] = [\sigma \cdot \G, \tau]$.
    \end{rem}
    
    \begin{exmp}\label{ExceptionalCase}
    When $w = (w_1, w_2)$ with $w_1 + w_2 \leq 1$, there are only two stable graphs in $\Gamma_{1, w}$ with a positive number of edges: a single loop, where the single vertex supports both markings, and a pair of parallel edges, where each vertex supports one marking. Both of these graphs have a unique edge-labelling up to their automorphism groups, so we have \[|\Delta_{1, w}[0]| = |\Delta_{1, w}[1]| = 1 \]
    while $\Delta_{1, w}[p] = \varnothing$ for $p>1$, so the automorphism group is trivial in this case. The geometric realization is given by the quotient of a $1$-simplex by its automorphism group $S_2$. 
    \end{exmp}
    
    Following ~\cite{K}, we analyze the action of $\Aut(\Delta_{g, w})$ by showing that it preserves the subspace $\V^{i}_{g, w}$ of $\Delta_{g, w}$ parameterizing tropical curves with at most $i$ vertices. Each $\V^{i}_{g, w}$ is a subcomplex of $\Delta_{g, w}$, and we have
    \[ \V^{1}_{g, w} \subseteq \V^{2}_{g, w} \subseteq \cdots \subseteq \V^{2g - 2 + n}_{g, w} = \Delta_{g, w}.  \]
    The proof that $\Aut(\Delta_{g, w})$ preserves this filtration is very similar to the proof of ~\cite[Proposition 3.4]{K}, with some minor differences. Due to this similarity, we record the result here and relegate its proof to Appendix \ref{VertexFiltration}.
    \begin{thm}\label{VFiltrationPreserved}
    Let $\Phi \in \Aut(\Delta_{g, w})$. Then $\Phi$ preserves the subcomplexes $\V^{i}_{g, w}$ for all $i \geq 1$. 
    \end{thm}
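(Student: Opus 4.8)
The plan is to adapt the argument of \cite[Proposition~3.4]{K}, inducting on $i$. The only structural input needed is the identity $d_j \circ \Phi = \Phi \circ d_j$, which is the commutativity of~(\ref{Simplicial}); recall that $d_j = \Delta_{g,w}(\delta^j)$ sends a cell $[\G, \tau]$ to $[\G/e_j, \tau_j]$, the contraction of the edge $e_j = \tau^{-1}(j)$. Two elementary facts about edge-contraction will be used repeatedly: (a) contracting a non-loop edge of a graph decreases its number of vertices by exactly one, while contracting a loop leaves the number of vertices unchanged, so in all cases $|V(\G)| \leq |V(\G/e)| + 1$; and (b) any connected graph with at least two vertices has a non-loop edge.

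\emph{Inductive step.} Fix $i \geq 2$ and assume $\Phi(\V^{i-1}_{g,w}) = \V^{i-1}_{g,w}$. Let $[\G, \tau] \in \V^{i}_{g,w} \cap \Delta_{g,w}[p]$. If $p = 0$ then every cell of $\Delta_{g,w}[0]$ already lies in $\V^{i}_{g,w}$, since a one-edge graph has at most two vertices, and $\Phi$ preserves $\Delta_{g,w}[0]$ tautologically; so assume $p \geq 1$. We may also assume $|V(\G)| = i$, as otherwise $[\G,\tau] \in \V^{i-1}_{g,w}$ and we are done. By~(b), choose a non-loop edge $e \in E(\G)$ and set $j = \tau(e)$. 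Then $\G/e_j$ has $p \geq 1$ edges and $i - 1$ vertices, so $d_j[\G,\tau] \in \V^{i-1}_{g,w}$; by hypothesis $d_j(\Phi[\G,\tau]) = \Phi(d_j[\G,\tau]) \in \V^{i-1}_{g,w}$, and since $d_j(\Phi[\G,\tau])$ is the contraction of one edge of $\Phi\G$, fact~(a) forces $|V(\Phi\G)| \leq i$, i.e.\ $\Phi[\G,\tau] \in \V^{i}_{g,w}$. Applying the same reasoning to $\Phi^{-1}$ gives $\Phi(\V^{i}_{g,w}) = \V^{i}_{g,w}$.

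\emph{Base case $i = 1$ (the main obstacle).} If $g = 0$ then $\V^1_{0,w} = \varnothing$ and there is nothing to prove, so assume $g \geq 1$. For $0 \leq p \leq g - 1$, the set $\V^1_{g,w} \cap \Delta_{g,w}[p]$ is a single cell $L_p$, represented by the graph with one vertex --- of weight $g - p - 1$, carrying all $n$ markings --- and $p+1$ loops; for $p \geq g$ it is empty. One must show $\Phi(L_p) = L_p$ for all such $p$. The difficulty is that neither the $\SS_{p+1}$-action nor the face maps single out $L_p$: writing $B_p$ for the ``banana'' cell given by two vertices joined by $p+1$ parallel edges, both $L_p$ and $B_p$ are fixed by all of $\SS_{p+1}$, since in each case $\Aut(\G)$ surjects onto $\mathrm{Perm}(E(\G))$, and contracting any parallel edge of $B_p$ turns the remaining edges into loops, so $L_p$ and $B_p$ have identical decks of faces. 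The distinction must therefore be extracted from the cells above them: for instance, whenever $1 \leq p \leq g-1$ some cell of $\Delta_{g,w}[p+1]$ has all of its faces equal to $L_p$ (take $B_{p+1}$, or $L_{p+1}$ when $p \leq g-2$), whereas for $p \geq 2$ no cell has all of its faces equal to a banana cell $B_p$. Assembling observations of this kind --- together with a direct check of the two smallest cells $L_0$ and $L_1$ --- into an intrinsic description of $\V^1_{g,w}$, one necessarily preserved by any automorphism, is precisely the content of the corresponding step in \cite{K}; since $\Gamma_{g,w}^{\mathrm{EL}}$ is a full subcategory of $\Gamma_{g,n}^{\mathrm{EL}}$, passing from $w = (1^{(n)})$ to general $w$ changes only which low-complexity graphs must be inspected. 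The details are carried out in Appendix~\ref{VertexFiltration}.
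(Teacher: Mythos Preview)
Your inductive step is clean and correct, but it does not actually reduce the problem: the base case $i = 1$ carries the entire content of the theorem. Once you know $\V^1_{g,w}$ is preserved, the inductive argument you give is fine (indeed the paper gets this for free, since fixing $b^1(\G)$ together with $|E(\G)|$ determines $|V(\G)|$ exactly). But you do not prove the base case --- you correctly identify the obstruction (the rose $L_p$ and the banana $B_p$ have the same $\SS_{p+1}$-stabilizer and the same deck of codimension-one faces), make a plausible remark about looking at cells \emph{above}, and then defer to \cite{K} and to Appendix~\ref{VertexFiltration}. Since Appendix~\ref{VertexFiltration} \emph{is} the proof you are meant to supply, this is circular as written.

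The paper's argument is organized differently and is worth comparing. Rather than inducting on $i$, it shows directly that $b^1(\G)$ is preserved by any $\Phi$, by proving that the single simplex $[\R_g]$ is fixed. The chain is: (i) $[\R_1]$ is characterized intrinsically as the unique vertex of $\Delta_{g,w}$ that is a face of \emph{every} facet (one exhibits, for each $(g,w)$, a maximal bridgeless $w$-stable graph); (ii) once $[\R_1]$ is fixed, the set of bridge indices of any $(\G,\tau)$ is preserved, since $i$ is a bridge iff $d_{[p]\smallsetminus\{i\}}[\G,\tau] \neq [\R_1]$; (iii) taking a maximal $\G$ whose every edge is a loop or a bridge, the bridge indices form a spanning tree in both $\G$ and $\Phi\G$, and contracting a spanning tree yields $[\R_g]$ on both sides. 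This avoids the case analysis you sketch (distinguishing $L_p$ from $B_p$ for each $p$) and handles all the roses at once. Your heuristic --- ``some cell above has all faces equal to $L_p$, but none has all faces equal to $B_p$'' --- is in the right direction for intermediate $p$ but does not obviously survive the passage to general $w$ (you need the relevant bananas and their expansions to be $w$-stable), and you have not given the ``direct check'' for $L_0$ at all, which is exactly step~(i) above and is where the $w$-dependence actually enters.
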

    
    The next theorem follows from Theorem \ref{Reconstruction}. We omit the proof, as it is exactly the same as in the special case of $w = (1^{(n)})$, which is ~\cite[Theorem 1.5]{K}.
    
    \begin{thm}\label{RestrictionInjects}
    Fix $g \geq 0$ and a weight vector $w \in \Q^n \cap (0, 1]^{n}$, where $2g - 2 + \sum w_i > 0$. Then the restriction map
    \[\Aut(\Delta_{g, w}) \to \Aut(\V^{2}_{g, w})  \]
    is an injection.
    \end{thm}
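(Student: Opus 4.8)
The plan is to reduce the injectivity statement to the claim that any $\Phi \in \Aut(\Delta_{g,w})$ restricting to the identity on $\V^2_{g,w}$ is already the identity: if two automorphisms agree on $\V^2_{g,w}$, apply this to their ``difference'' $\Psi^{-1}\Phi$. I would then prove by induction on $i \geq 2$ that $\Phi$ acts as the identity on $\V^i_{g,w}$; the base case $i = 2$ is the hypothesis, and the case $i = 2g - 2 + n$ gives the theorem since $\V^{2g-2+n}_{g,w} = \Delta_{g,w}$. By Theorem \ref{VFiltrationPreserved}, both $\Phi$ and $\Phi^{-1}$ preserve every $\V^i_{g,w}$, hence also preserve the difference $\V^i_{g,w} \smallsetminus \V^{i-1}_{g,w}$; so in the inductive step it suffices to treat a class $[\G, \tau]$ whose underlying graph has exactly $i$ vertices, where now $i \geq 3$.

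Writing $\Phi[\G, \tau] = [\G', \tau']$ with $\tau, \tau' \colon E \to [p]$, the graph $\G'$ also has $i$ vertices by the previous remark. For $j \in [p]$ set $e_j = \tau^{-1}(j)$ and $e'_j = (\tau')^{-1}(j)$. Commutativity of the diagram (\ref{Simplicial}) gives $d_j \Phi[\G, \tau] = \Phi\, d_j[\G, \tau]$, that is $[\G'/e'_j, \tau'_j] = \Phi[\G/e_j, \tau_j]$ for every $j$. The key point is then that $e_j$ is a nonloop of $\G$ if and only if $e'_j$ is a nonloop of $\G'$, and that in that case $(\G/e_j, \tau_j) \cong (\G'/e'_j, \tau'_j)$: if $e_j$ is a nonloop of $\G$ then $\G/e_j$ has $i - 1$ vertices, so $[\G/e_j, \tau_j] \in \V^{i-1}_{g,w}$ and the inductive hypothesis upgrades the equality above to $[\G'/e'_j, \tau'_j] = [\G/e_j, \tau_j]$; in particular $\G'/e'_j$ has $i - 1 < i$ vertices, so $e'_j$ is a nonloop of $\G'$. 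Conversely, if $e'_j$ is a nonloop of $\G'$ then $\G'/e'_j$ has $i - 1$ vertices, and since $\Phi^{-1}$ preserves the number of vertices and $\G/e_j$ represents $\Phi^{-1}[\G'/e'_j, \tau'_j]$, the graph $\G/e_j$ has $i - 1$ vertices, so $e_j$ is a nonloop of $\G$. This says precisely that the nonloop contraction decks agree: $\D^{\G}_{\tau} \cong \D^{\G'}_{\tau'}$.

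Finally I would invoke Theorem \ref{Reconstruction}: since $b_1(\G) = b_1(\G') = g$, the decks agree, and $|V(\G)| = |V(\G')| = i \geq 3$, it yields $(\G, \tau) \cong (\G', \tau')$, i.e.\ $\Phi[\G, \tau] = [\G, \tau]$. Combined with the inductive hypothesis, this shows $\Phi$ is the identity on $\V^i_{g,w}$, completing the induction.

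I do not expect a genuine obstacle here; the substantive input is Theorem \ref{Reconstruction}, and the argument above is the bookkeeping that feeds it matching contraction decks. The two points worth flagging are that the induction must begin at $i = 3$, because Theorem \ref{Reconstruction} needs at least three vertices — which is exactly why the statement injects $\Aut(\Delta_{g,w})$ into $\Aut(\V^2_{g,w})$ rather than into $\Aut(\V^1_{g,w})$ — and that preservation of the vertex filtration (Theorem \ref{VFiltrationPreserved}) is what synchronizes the loop/nonloop structure of $\G$ and $\G'$, without which the decks could not be compared.
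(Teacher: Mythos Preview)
Your overall strategy---induction on the vertex count, matching nonloop contraction decks, and invoking Theorem~\ref{Reconstruction}---is exactly the route the paper (via \cite{K}) takes. But there is a genuine gap in the final step. You write ``since $b_1(\G) = b_1(\G') = g$'' as though it were automatic, and then apply Theorem~\ref{Reconstruction}. This hypothesis is \emph{not} satisfied for an arbitrary $\G \in \Gamma_{g,w}$ with $i$ vertices: a graph with a vertex of positive weight $h(v) > 0$ has $b_1(\G) < g$, and such graphs certainly occur among the $i$-vertex simplices you must treat. Theorem~\ref{Reconstruction} as stated only applies to the ``pure'' case $b_1 = g$, so your invocation is illegitimate in general.

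The repair is short but necessary. Run your argument first only for those $[\G,\tau]$ with $|V(\G)| = i$ \emph{and} $b_1(\G) = g$; here your deck-matching works verbatim, and since contracting a nonloop preserves $b_1$, the inductive hypothesis still applies to each $\G/e_j$. This shows $\Phi$ fixes all $i$-vertex simplices of maximal Betti number. Now take any $[\H,\sigma]$ with $|V(\H)| = i$ and $b_1(\H) < g$. Adding a loop at a vertex $v$ with $h(v) > 0$ (which leaves the vertex count unchanged, preserves $w$-stability, and increases $b_1$ by one) and iterating, you obtain a graph $\G$ with $|V(\G)| = i$ and $b_1(\G) = g$ such that $[\H,\sigma]$ is a face of $[\G,\tau]$ for suitable $\tau$. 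Since $\Phi$ fixes $[\G,\tau]$ by the first step, it fixes the face $[\H,\sigma]$. This completes the inductive step for all $i$-vertex simplices.
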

    
    
    \section{Calculation of $\Aut(\Delta_{g, w})$ for $g \geq 1$}\label{ProofofAuts}
    Suppose $g,n \geq 1$ and $w \in \Q^n \cap (0, 1]^n$, where $2g - 2 + n \geq 3$. If $n = 1$, then $\Delta_{g,w} \cong \Delta_{g, 1}$, so Theorem \ref{Auts} specializes to the main result of ~\cite{K}. Thus we hereafter assume $n \geq 2$ (and when $g = 1$, $n\geq 3$). To prove Theorem \ref{Auts}, we first show that any $\Phi \in \Aut(\Delta_{g, w})$ preserves the $S_n$-orbit of a given simplex in $\V^{2}_{g, w}$.
    \subsection{$\Aut(\Delta_{g, w})$ preserves $S_n$-orbits in $\V^2_{g,w}$} We want to show that for any $[\G, \tau]$ in $\V^{2}_{g, w}$, the action of $\Phi \in \Aut(\Delta_{g, n})$ preserves the $S_n$-orbit of $[\G, \tau]$. It suffices to show this in the case where $[\G, \tau]$ is a facet of $\V^{2}_{g, w}$, i.e. $[\G, \tau] \in \V^{2}_{g, w}[g]$. The first step is to show that $\Aut(\Delta_{g, w})$ preserves the isomorphism class of the edge-labelled graph underlying such a facet, if we forget the marking function. This motivates the following definition.
    
    \begin{defn}\label{WeakIsomorphism}
	Given two objects $(\G, \tau), (\G', \tau')$ of $\Gamma_{g, w}^{\mathrm{EL}}$ with $\G = (G, h, m)$ and $\G' = (G', h', m')$, we say that $(\G, \tau)$ and $(\G', \tau')$ are \textit{\textbf{weakly isomorphic}} if there exists an isomorphism of weighted graphs
	\[
	\varphi: (G, h) \to (G', h')
	\]
	making the diagram
	\[\begin{tikzcd}
	&E(G') \arrow[dr, "\tau'"] \arrow[rr, "\varphi^*"] & &E(G) \arrow[dl, "\tau"']\\
	& &\lbrack p \rbrack &
	\end{tikzcd} \]
	commute. Such a map $\varphi$ is called a \textit{\textbf{weak isomorphism of pairs}}, and is denoted with a dashed arrow \[\varphi: (\G, \tau) \dashrightarrow (\G', \tau').\]
    \end{defn}
    
    The proof of the following lemma is the same as that of ~\cite[Proposition 5.3]{K}, and is thus omitted.
    
    \begin{lem}\label{WeakPreserved}
	Suppose $\Phi \in \Aut(\Delta_{g,w})$, and let $[\G, \tau] \in \V^{2}_{g, w}[g]$. Then, for any representatives $(\G, \tau), (\Phi\G, \Phi\tau)$, there exists a weak isomorphism
	\[\varphi: (\G, \tau) \dashrightarrow (\Phi\G, \Phi\tau). \]
    \end{lem}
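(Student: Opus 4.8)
The plan is to recover the weak-isomorphism type of $(\G,\tau)$ from features of the pair that are manifestly preserved by $\Phi$, handling the delicate part by induction on the number of edges.

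First I would pin down the shape of a facet of $\V^2_{g,w}$. If $[\G,\tau]\in\V^2_{g,w}[g]$ then $|E(\G)|=g+1$; since $b_1(\G)\le g$ we must have $|V(\G)|\ge 2$, and membership in $\V^2_{g,w}$ forces $|V(\G)|\le 2$, so $|V(\G)|=2$. The identity $b_1(\G)+\sum_v h(v)=g$ then gives $b_1(\G)=g$ and $h\equiv 0$: thus $\G$ is a graph on two vertices $u,v$ carrying some loops at $u$, some loops at $v$, and at least one edge joining $u$ to $v$, with no vertex-genera involved. Consequently the weak-isomorphism type of $(\G,\tau)$ is precisely the datum of the set $C\subseteq[g]$ of labels of the joining edges, together with the unordered partition $\{A,B\}$ of $[g]\smallsetminus C$ recording which loops sit at which vertex. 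Since $\Phi$ preserves $\V^2_{g,w}$ (Theorem~\ref{VFiltrationPreserved}) and each $\Phi_p$ is bijective, $[\Phi\G,\Phi\tau]$ is again such a facet, with associated data $(C',\{A',B'\})$, and it suffices to prove $C=C'$ and $\{A,B\}=\{A',B'\}$.

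The equality $C=C'$ is easy: a label $i$ lies in $C$ exactly when contracting $e_i$ identifies $u$ with $v$, that is, when $d_i[\G,\tau]\in\V^1_{g,w}$ — concretely, for such $i$ this face is the unique edge-labelled genus-$g$ rose. Since $\Phi$ commutes with $d_i$ by diagram~(\ref{Simplicial}) and preserves $\V^1_{g,w}$ by Theorem~\ref{VFiltrationPreserved}, we get $i\in C'\iff i\in C$.

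The equality $\{A,B\}=\{A',B'\}$ is the crux, and here Theorem~\ref{Reconstruction} is of no help since our graphs have only two vertices; a hands-on argument is needed. The idea is that whether two loop-labels $i\neq j$ sit at the same vertex of $\G$ is recorded in the weak-isomorphism type of a graph with fewer edges obtained by contraction: contracting $e_i$ and $e_j$ via the face operator $\iota^*$ associated to the injection with image $[g]\smallsetminus\{i,j\}$ produces a two-vertex graph whose multiset of vertex-genera is $\{2,0\}$ if $i$ and $j$ share a vertex in $\G$ and $\{1,1\}$ otherwise. Because $\Phi$ commutes with every face operator $\iota^*$ (by diagrams~(\ref{Symmetric}) and~(\ref{Simplicial}) together with the factorization of morphisms in $\mathrm{I}$), the desired statement for the facet $(\G,\tau)$ reduces to the a priori stronger assertion that $\Phi$ preserves the weak-isomorphism type of \emph{every} object of $\V^2_{g,w}$. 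I would prove this by induction on the number of edges: the inductive step is precisely the reduction above, run over all pairs of loop-labels, combined with the analysis of $C$ from the previous paragraph (which never sees vertex-genera); the low-degree base cases — two-vertex graphs with very few edges, such as unions of parallel edges and single bridges — are settled directly using the face maps out of $\Delta_{g,w}[1]$ and the preservation of the vertex filtration. This induction, which mirrors the proof of~\cite[Proposition~5.3]{K}, is the main technical obstacle. Once it is in place, $\Phi$ preserves the ``same vertex'' relation on loop-labels, hence $\{A,B\}=\{A',B'\}$; assembling this with $C=C'$ exhibits a weak isomorphism $\varphi\colon(\G,\tau)\dashrightarrow(\Phi\G,\Phi\tau)$.
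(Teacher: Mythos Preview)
The paper omits this proof entirely, deferring to \cite[Proposition~5.3]{K}; so there is no in-paper argument to compare against, and I will evaluate your sketch on its own. Your identification of the facets of $\V^{2}_{g,w}$ and your determination of the connecting-edge label set $C$ via the vertex filtration are both correct and clean.

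The gap is in your inductive argument for $\{A,B\}$. You propose to strengthen the claim to all of $\V^{2}_{g,w}$ and induct on the edge count, with the inductive step recovering $(C,\{A,B\})$ from the weak-isomorphism types of two-loop contractions. But the weak-isomorphism type of a general two-vertex graph also records the vertex-weight data $\{h(u),h(v)\}$, and your inductive step never recovers it, so the strengthened hypothesis cannot be propagated. The difficulty is already visible one level down: after contracting a pair of loops from a facet you must distinguish vertex-genera $\{2,0\}$ from $\{1,1\}$, and you invoke the hypothesis for that --- yet to establish the hypothesis for such graphs you would again contract loops, eventually reaching two-vertex graphs with at most one loop and nonzero $h$, where loop-pair contraction is unavailable and single contractions land in $\V^{1}_{g,w}$, giving no information about the split of $h$ between the two vertices. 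The ``direct'' treatment of these base cases you allude to (e.g.\ distinguishing bridges of type $\{a,g-a\}$ using only face maps and the filtration) is not routine, and you do not indicate how it goes.

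There is a much shorter route that bypasses the induction. For a facet $[\G,\tau]\in\V^{2}_{g,w}[g]$ with $n\ge 1$, the two vertices of $\G$ carry distinct marking sets, so every $\Gamma_{g,w}$-automorphism of $\G$ fixes both vertices. Consequently, for two loop-labels $i,j$, the transposition $(i\,j)$ lies in $\mathrm{Stab}_{\SS_{g+1}}[\G,\tau]$ if and only if $e_i$ and $e_j$ are loops based at the \emph{same} vertex. Since diagram~(\ref{Symmetric}) forces $\mathrm{Stab}_{\SS_{g+1}}[\G,\tau]=\mathrm{Stab}_{\SS_{g+1}}[\Phi\G,\Phi\tau]$, the same-vertex relation on loop labels --- hence the unordered partition $\{A,B\}$ --- is preserved by $\Phi$. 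Combined with $C=C'$ this yields the weak isomorphism directly, with no induction and no appeal to graphs carrying nonzero vertex weights. (Note that the paper's own proof of Lemma~\ref{cyclespreserved}(b) uses exactly this transposition-in-stabilizer idea when $|V(\G)|\ge 3$, and then cites the present lemma for the two-vertex case; this strongly suggests that the argument in \cite{K} is along these stabilizer lines rather than the contraction-induction you outline.)
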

    
    Now we work towards the proof that $S_n$-orbits of simplices in $\V^{2}_{g, w}[g]$ are preserved. For this we will need the following lemma. We adopt the convention that a $1$-cycle of a graph is a loop, and a $2$-cycle is a pair of parallel edges.
    
    \begin{lem}\label{cyclespreserved}
    Let $\Phi \in \Aut(\Delta_{g,w})$, and suppose $[\G, \tau] \in \Delta_{g, w}[p]$. Then:
    \begin{enumerate}[(a)]
        \item a subset $S \subseteq [p]$ indexes a $k$-cycle of $\G$ via $\tau$ if and only if it indexes a $k$-cycle of $\Phi\G$ via $\Phi\tau$;
        \item a subset $\{i,j\} \subseteq [p]$ indexes a pair of loops on the same vertex of $\G$ if and only if it indexes a pair of loops on the same vertex of $\Phi \G$.
    \end{enumerate}
    \end{lem}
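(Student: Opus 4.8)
The plan is to reduce both statements to the single $\Phi$-invariant primitive ``$i$ indexes a loop of $\G$'' and then bootstrap using that $\Phi$ commutes with the face maps $d_i$ (diagram (\ref{Simplicial})) and preserves the vertex filtration (Theorem \ref{VFiltrationPreserved}). For the primitive: the edge $\tau^{-1}(i)$ of $[\G,\tau]\in\Delta_{g,w}[p]$ is a loop if and only if $d_i[\G,\tau]$ has the same number of vertices as $[\G,\tau]$, since contracting a non-loop edge merges its two endpoints while contracting a loop (deleting it and raising a vertex genus) does not. The number of vertices of a class is exactly the index $j$ with the class in $\V^{j}_{g,w}\smallsetminus\V^{j-1}_{g,w}$, so it is $\Phi$-invariant; combined with $\Phi\circ d_i=d_i\circ\Phi$ this already gives part (a) for $k=1$, and it shows that ``$\{i,j\}$ is a pair of loops'' is $\Phi$-invariant, which is part of (b).

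For part (a) with $|S|=k\geq 2$, I would prove the equivalence: $S$ indexes a $k$-cycle of $\G$ via $\tau$ if and only if (i) no element of $S$ indexes a loop of $\G$, and (ii) for every $i\in S$, the set $S\smallsetminus\{i\}$ indexes a $(k-1)$-cycle of $d_i[\G,\tau]$. The forward implication is immediate, as contracting one edge of a $k$-cycle yields a $(k-1)$-cycle. For the converse, let $H\subseteq\G$ be the subgraph spanned by $\tau^{-1}(S)$; since contracting a non-loop edge of $\G$ that lies in $H$ has the same effect on $H$ as contracting it inside $H$, condition (ii) says precisely that $H/e\cong C_{k-1}$ for every edge $e$ of $H$. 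As $C_{k-1}$ is connected with $k-1$ edges, this forces $H$ to be connected with $k$ vertices and $k$ edges, hence a cycle $C_m$ ($2\leq m\leq k$) with trees attached; if $m<k$ then contracting a cycle edge produces a loop (when $m=2$) or a strictly shorter cycle carrying a nonempty tree (when $m\geq 3$), neither of which is $C_{k-1}$, so $m=k$ and $H\cong C_k$. Now (i) is $\Phi$-invariant by the primitive and (ii) by the inductive hypothesis applied to the classes $d_i[\G,\tau]$, using $\Phi\circ d_i=d_i\circ\Phi$; this completes (a).

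For the remaining content of (b) — that ``two loops at a common vertex'' is $\Phi$-invariant — I would induct on $|V(\G)|$. If $|V(\G)|\leq 2$, realize $[\G,\tau]$ as a face of a facet $[\Gamma',\tau']$ of $\V^{2}_{g,w}$ (any such stable graph is obtained from a two-vertex, all-genus-zero, $(g+1)$-edge graph by contracting loops, and where necessary all but one edge of a parallel bundle); since $\Phi$ commutes with the corresponding face map and, by Lemma \ref{WeakPreserved}, fixes the underlying weighted edge-labelled graph of $[\Gamma',\tau']$, it fixes that of $[\G,\tau]$ and hence preserves which loops share a vertex. If $|V(\G)|\geq 3$ and $i,j$ are loops at a common vertex $v$ of $\G$ but, for contradiction, at distinct vertices $a\neq b$ of $\Phi\G$ (they are loops of $\Phi\G$ by part (a)), choose an $a$--$b$ separating edge set $T$ of $\Phi\G$ with $T\cap\{i,j\}=\emptyset$ and let $\kappa^{*}$ be the face map contracting all edges outside $T\cup\{i,j\}$. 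Then $\kappa^{*}[\Phi\G]=\Phi(\kappa^{*}[\G])$ has $i,j$ at distinct vertices, while $\kappa^{*}[\G]$ keeps them at the image of $v$ because contraction never separates vertices; choosing $T$ so that $\kappa^{*}$ contracts at least one non-loop edge of $\G$, the class $\kappa^{*}[\G]$ has strictly fewer vertices than $\G$, and the inductive hypothesis gives the contradiction.

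The one genuinely delicate point is this last step of (b): edge-contraction cannot by itself distinguish two loops at one vertex from two loops at the two ends of a bridge, so one must argue in reverse — exhibiting a face in which the loops of $\Phi\G$ are visibly separated while the matching face of $\G$ is not — and then verify that the vertex count actually drops so that the induction closes. The awkward configurations, roughly those in which the common vertex carries only a single non-loop edge, are exactly the ones pushed down to the two-vertex base case, where Lemma \ref{WeakPreserved} does the work. Part (a), by contrast, is routine graph combinatorics once the loop primitive and the naturality of $\Phi$ under the $d_i$ are in hand.
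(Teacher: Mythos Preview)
Your argument for part (a) is the same induction as the paper's, with one genuine improvement: you add the hypothesis that no element of $S$ indexes a loop, which is needed for the converse of the recursive characterisation (a pair of loops satisfies the naive ``every one--edge contraction is a $(k-1)$--cycle'' condition for $k=2$ without being a $2$--cycle). The paper's proof glosses over this.

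For part (b) your route is different from the paper's. You induct on $|V(\G)|$, pushing the two--loop configuration down to a graph with strictly fewer vertices by contracting edges outside a carefully chosen $a$--$b$ separating set in $\Phi\G$; the base case $|V|\leq 2$ is then handled, as in the paper, via Lemma~\ref{WeakPreserved}. The paper instead exploits the $\SS_{p+1}$--equivariance of diagram~(\ref{Symmetric}) directly: if $i,j$ are loops at a common vertex then the transposition $(i,j)$ lies in $\mathrm{Stab}_{\SS_{p+1}}[\G,\tau]$, hence in $\mathrm{Stab}_{\SS_{p+1}}[\Phi\G,\Phi\tau]$, so $\Phi\G$ admits an automorphism swapping the loops $i,j$ and fixing every other edge; for $|V|\geq 3$ this forces the loops to share a vertex, and for $|V|\leq 2$ one again falls back on Lemma~\ref{WeakPreserved}. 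The paper's argument is shorter and uses a structural feature of symmetric $\Delta$--complexes (the $\SS_{p+1}$--action) that you do not touch; your argument is more elementary in that it only uses the face maps $d_i$ and the vertex filtration, at the cost of the extra bookkeeping in choosing $T$ and closing the induction. Both are valid.
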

    \begin{proof}
     We prove each part separately.
    \begin{enumerate}[(a)]
        \item When $k = 1$, the claim is true as an index $i$ labels a loop of $\G$ if and only if $\G/e_i$ has the same number of vertices as $\G$, and $\Phi d_i[\G, \tau] = d_i [\Phi \G, \Phi \tau]$. Since $\Phi$ preserves the number of vertices, it follows that $i$ must label a loop index of $\Phi \G$. Now $S$ labels a $k$-cycle of $\G$ if and only if, for all $i \in S$, the set $\delta_i(S)$ labels a $(k - 1)$-cycle of $d_i[\G, \tau]$ (here, $\delta_i: [p] \smallsetminus \{i\} \to [p - 1]$ is the unique map such that $\delta_i \circ \delta^i = \mathrm{id}$). Thus the claim follows by induction.
        \item For $i, j$ to label a pair of loops on the same vertex of $\G$, we must have $(i,j) \in \mathrm{Stab}_{\SS_{p + 1}}[\G, \tau]$. By the commutativity of (\ref{Symmetric}), we must also have $(i, j)\in \mathrm{Stab}_{\SS_{p + 1}}[\Phi\G, \Phi\tau]$, and by the first part of the lemma, $i$ and $j$ must both label loops of $\Phi \G$. So $\Phi \G$ must have an automorphism which exchanges the loops $i$ and $j$, but which fixes every other edge. If $|V(\Phi\G)| \geq 3$, this is only possible if $i$ and $j$ label loops on the same vertex of $\Phi \G$. When $|V(\Phi\G)| = 2$, the claim follows from Lemma \ref{WeakPreserved}, and the claim is clear when $|V(\Phi\G)| = 1$. \qedhere 
    \end{enumerate}
    \end{proof}
  
    Lemmas \ref{WeakPreserved} and \ref{cyclespreserved} give us a criterion for checking whether $\Aut(\Delta_{g,w})$ preserves the weak isomorphism class of a given simplex, and at the level of $\V^{2}_{g, w}$, Lemma \ref{WeakPreserved} means that $\Aut(\Delta_{g,w})$ acts on an edge-labelled stable graph by at most changing the marking function. We would like to show that this redistribution preserves the number of markings on each vertex. It is useful to introduce notation parameterizing the facets of $\V^{2}_{g, w}$.
     
    Fix a vertex set $\{v_1, v_2\}$. For two integers $k, \ell$ such that $k, \ell \geq 0$ and $k + \ell \leq g$, we fix $B^{k,  \ell}$ to be a graph with vertex set $\{v_1, v_2\}$, where $v_1$ and $v_2$ are connected by $g - (k +  \ell) + 1$ edges, so that $v_1$ supports $k$ loops while $v_2$ supports  $\ell$ loops. 
    
    By construction, $B^{k,  \ell}$ has genus $g$ and $g + 1$ edges, and we have graph isomorphisms $B^{k,  \ell} \cong B^{ \ell, k}$. Up to isomorphism, any facet of the subcomplex $\V^2_{g, w}$ is a marked, edge labeled version of $B^{k,  \ell}$ for some $k$,  $\ell$. Given a subset $A \subseteq \{1, \ldots, n\}$, we put $B^{k, \ell}_A$ for an $n$-marked version of $B^{k, \ell}$, where a vertex with $k$ loops supports the elements of $A$ and the other vertex supports the elements of $A^c$. We will use the boldface notation $\B^{k, \ell}_A$ when $B^{k, \ell}_A$ defines a $\Gamma_{g, w}$-object. Fixing choices of edge-labellings $\pi^{k, \ell}: E(B^{k, \ell}) \to [g]$, we put $[\B^{k, \ell}_A] = [\B^{k, \ell}_A, \pi^{k, \ell}]$ for the resulting simplex in $\V^{2}_{g, w}[g]$. Throughout the remainder of this section, we will tacitly change the choice of $\pi^{k, \ell}$ for a given $k, \ell$ if it is necessary. To prove that $\Aut(\Delta_{g, w}) \cong \Aut(K_w)$, it suffices to show that for any $\Phi \in \Aut(\Delta_{g, w})$, there exists a unique element $\sigma \in \Aut(K_w)$ such that
    \[\Phi[\B^{k, \ell}_A] = [\sigma \cdot \B^{k, \ell}_A] = [\B^{k, \ell}_{\sigma(A)}] \]
    for all $k, \ell, A$.
    
    Following ~\cite{K}, we now observe that for a fixed choice of $A$, there always exists some (not necessarily unique) permutation $\sigma_A \in S_n$ such that $\Phi[\B^{k, \ell}_A] = [\B^{k, \ell}_{\sigma_A(A)}]$:
    
    \begin{thm}\label{OrbitsPreserved}
    Suppose $g, n \geq 1$ with $2g - 2 + n \geq 3$ and $w \in \Q^n \cap (0, 1]^n$. Then for all $\Phi \in \Aut(\Delta_{g,w})$ and $\B_{A}^{k, \ell}$, there exists some $\Phi(A) \subseteq \{1, \ldots, n \}$ such that $|A| = |\Phi(A)|$ and $\Phi[\B^{k, \ell}_A] = [\B^{k, \ell}_{\Phi(A)}]$. Moreover, the choice of such $\Phi(A)$ is unique unless $(k, \ell) = (0, 0)$ and $|A| = n/2$.
    \end{thm}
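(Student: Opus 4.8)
The plan is to bootstrap from the rigidity already in hand. Lemmas~\ref{WeakPreserved} and~\ref{cyclespreserved} will pin down the edge-labelled graph underlying $\Phi[\B^{k,\ell}_A]$, leaving only the question of how $\Phi$ redistributes the markings, and the compatibility of $\Phi$ with the face maps $d_i$ and with the $\SS_{p+1}$-action will then control that redistribution. Concretely, for a facet $[\B^{k,\ell}_A]\in\V^2_{g,w}[g]$, Lemma~\ref{WeakPreserved} supplies a weak isomorphism from $(\B^{k,\ell}_A,\pi^{k,\ell})$ to any representative of $\Phi[\B^{k,\ell}_A]$; since a weak isomorphism preserves the underlying \emph{weighted} graph and $B^{k,\ell}$ has trivial vertex weights, $\Phi\B^{k,\ell}_A$ is $B^{k,\ell}$ equipped with some marking function. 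Lemma~\ref{cyclespreserved}(a) shows that the labels of the loops (as $1$-cycles), and hence also of the connecting edges, are preserved, and Lemma~\ref{cyclespreserved}(b) shows that the partition of the loop-labels between the two vertices is preserved; hence $\Phi\B^{k,\ell}_A=\B^{k,\ell}_{A'}$ for some $A'\subseteq\{1,\dots,n\}$, well defined when $k\neq\ell$ and well defined up to complementation when $k=\ell$.

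The next and decisive step is to prove $|A'|=|A|$; setting $\Phi(A):=A'$ then exhibits a permutation $\sigma_A\in S_n$ with $\Phi[\B^{k,\ell}_A]=[\B^{k,\ell}_{\sigma_A(A)}]$. This is the technical heart of the theorem, and I expect to follow the argument of ~\cite[\S 5]{K} almost verbatim: the $w$-stability inequalities only restrict which subsets index facets and do not otherwise intervene. The idea is to contract the loops of $\B^{k,\ell}_A$ one at a time. Contracting a loop keeps two vertices and only shifts a vertex weight, so the resulting facet still carries the marking set $A$ on a distinguished vertex, whereas contracting any connecting edge drops into $\V^1_{g,w}$, which records no marking partition at all. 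Since $\Phi$ commutes with every $d_i$ (diagram~(\ref{Simplicial})) and with $\SS_{p+1}$ (diagram~(\ref{Symmetric})), the identification of the previous paragraph propagates through the entire contraction deck, and an induction — on the number of vertices via Theorem~\ref{VFiltrationPreserved}, and then on the number of edges, exactly as in ~\cite{K} — forces $|A'|=|A|$. I expect this bookkeeping to be the main obstacle.

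For uniqueness: when $k\neq\ell$ the two vertices of $B^{k,\ell}$ carry different numbers of loops, so the marked graph $\B^{k,\ell}_{A'}$ determines $A'$. When $k=\ell>0$, an automorphism of $B^{k,\ell}$ swapping the two vertices must also permute the two \emph{nonempty} sets of loops, hence moves edge labels and is not a symmetry of the labelled simplex; so $[\B^{k,k}_{A'}]$ and $[\B^{k,k}_{(A')^c}]$ are distinct, and, using bijectivity of $\Phi$ together with the same dichotomy at the source, the value $\Phi(A)$ is again determined. When $(k,\ell)=(0,0)$, however, \emph{every} edge of $B^{0,0}$ is a connecting edge, so the vertex-swap automorphism fixes every edge label; thus $[\B^{0,0}_{A'}]=[\B^{0,0}_{(A')^c}]$ in $\Delta_{g,w}[g]$, the two candidate values $A'$ and $(A')^c$ for $\Phi(A)$ coincide as simplices, and they can be told apart only by cardinality. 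This is exactly why uniqueness holds unless $(k,\ell)=(0,0)$ and $|A|=|A^c|=n/2$.

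Finally, it is worth noting that Theorem~\ref{Reconstruction} is not invoked directly in this proof, since every object appearing in $\V^2_{g,w}$ has at most two vertices; it enters the broader argument only through the injection $\Aut(\Delta_{g,w})\hookrightarrow\Aut(\V^2_{g,w})$ of Theorem~\ref{RestrictionInjects}.
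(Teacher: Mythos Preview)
Your outline is broadly aligned with the paper for the cases $(k,\ell)\neq(0,0)$: both you and the paper defer to \cite[\S 5.2]{K}, and your use of Lemmas~\ref{WeakPreserved} and~\ref{cyclespreserved} to fix the underlying edge-labelled graph is correct. Your uniqueness analysis is also right.

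The genuine gap is the case $(k,\ell)=(0,0)$. Your only mechanism for proving $|A'|=|A|$ is ``contract the loops of $\B^{k,\ell}_A$ one at a time,'' but $\B^{0,0}_A$ has no loops: it is $g+1$ parallel edges between two vertices, and contracting \emph{any} edge collapses it to a single vertex in $\V^1_{g,w}$, where the partition $A\sqcup A^c$ is lost. So the contraction deck of $[\B^{0,0}_A]$ carries no information about $|A|$, and no induction on faces can recover it. The paper (following \cite[Proposition~5.2]{K}) instead looks \emph{upward}: it counts the isomorphism classes of three-vertex $w$-stable graphs with a $3$-cycle and at most one multiedge that contract to $\B^{0,0}_A$, obtaining
\[
\mu(A)=2^{|A|}+2^{\,n-|A|}+\text{constant}.
\]
Lemma~\ref{cyclespreserved} guarantees that $\Phi$ preserves this count, so $\mu(A)=\mu(C)$ whenever $\Phi[\B^{0,0}_A]=[\B^{0,0}_C]$, and convexity of $x\mapsto 2^x+2^{n-x}$ forces $|C|\in\{|A|,\,n-|A|\}$. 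This expansion-counting step is the missing idea in your proposal.

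Relatedly, your assertion that ``the $w$-stability inequalities only restrict which subsets index facets and do not otherwise intervene'' is not quite right: the paper explicitly flags the $(0,0)$ case as the one place where the argument of \cite{K} must be redone, precisely because $\mu(A)$ is a count inside $\Gamma_{g,w}$ rather than $\Gamma_{g,n}$ and one must check the formula still takes the shape above.
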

    \begin{proof}
    The argument is exactly the same as ~\cite[Section 5.2]{K}, except in the case $(k, \ell) = (0,0)$. In this case, for a fixed $A \subseteq \{1, \ldots, n\}$, we let \[\mu(A) = |\{\G \in \pi_0(\Gamma_{g, w}) \mid |V(\G)| = 3,  \G \text{ has a }3\text{-cycle,}\leq \text{1 multiedge, contracts to }\B^{0,0}_A \}|. \]
    For any $A \subseteq \{1, \ldots, n \}$, it is straightforward to compute
    \[ \mu(A) = 2^{|A|} + 2^{n - |A|} + \text{Constant},  \]
    cf. ~\cite[Proposition 5.2]{K}. By Lemma \ref{WeakPreserved}, there exists some $C \subseteq \{1, \ldots, n \}$ such that 
    \[\Phi[\B_{A}^{0,0}] = [\B_C^{0,0}], \]
    Lemma \ref{cyclespreserved} implies that we must have $\mu(A) = \mu(C)$. In particular, we must have either $|A| = |C|$ or $|A| = n - |C|$. Then unless $|A| = n/2$, there is a unique choice between $\Phi(A) = C, C^c$ such that $\Phi[\B_{A}^{0,0}] = [\B_{\Phi(A)}^{0,0}]$ and $|A| = |\Phi(A)|$. If $|A| = n/2$, this choice is only determined up to complementation.
    \end{proof}

    Recalling that the $\B^{k, \ell}_A$ are precisely the facets of $\V^2_{g,w}$ we can see, using that $\Phi(d_i[\B^{k,\ell}_A]) = d_i(\Phi[\B^{k,\ell}_A]$), that Theorem \ref{OrbitsPreserved} extends to the preservation of $S_n$-orbits in $\V^2_{g, w}$ as a whole.
    
    \subsection{Finishing the proof of Theorem \ref{Auts}}
    First we deal with the case $n = 2$. In this case, we have $\Aut(K_w) \cong S_2$, and $g \geq 2$. Given $x \in \{1, \ldots, n \}$ we have that the graph
    \[\B^{g-1, 1}_x := \B^{g-1, 1}_{\{x\}} \]
    is always stable. Given $\Phi \in \Aut(\Delta_{g, w})$, there exists a unique element $\sigma \in S_2$ such that 
    \[\Phi [\B^{g-1, 1}_x] = [\B^{g-1, 1}_{\sigma(x)}] \]
    for $x = 1, 2$.
    \begin{prop}\label{length2}
    Suppose $n = 2$, $g \geq 2$, and $w \in \Q^2 \cap (0, 1]^2$. Then if $\Phi \in \Aut(\Delta_{g, w})$ fixes the simplices $[\B^{g-1, 1}_x]$ for $x = 1, 2$, then $\Phi|_{\V^{2}_{g, w}} = \mathrm{Id}|_{\V^2_{g, w}}$.
    \end{prop}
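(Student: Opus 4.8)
The plan is to show that $\Phi$ fixes every facet of $\V^2_{g,w}$, viewed as an element of $\Delta_{g,w}[g]$. Since $\Phi$ commutes with the $\SS_{g+1}$-action on edge-labellings and with the face maps $d_i$, fixing every facet as a single such element forces $\Phi$ to fix every edge-labelling of every facet, hence every face of every facet, i.e. $\Phi|_{\V^2_{g,w}} = \mathrm{Id}$. So everything reduces to controlling $\Phi$ on the facets $[\B^{k,\ell}_A]$.

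By Theorem \ref{OrbitsPreserved}, $\Phi[\B^{k,\ell}_A] = [\B^{k,\ell}_{\Phi(A)}]$ with $|\Phi(A)| = |A|$, so for $n=2$ every facet with $|A| \in \{0,2\}$ is fixed. If $k=\ell$, or $(k,\ell)=(0,0)$, then the underlying graph $B^{k,\ell}$ has an automorphism transposing its two vertices, so $[\B^{k,\ell}_{\{1\}}] = [\B^{k,\ell}_{\{2\}}]$ and again the facet is fixed. The remaining facets are the $[\B^{k,\ell}_{\{x\}}]$ with $0 \le \ell < k$, $k+\ell \le g$ and $k-\ell \le g-1$ (the last two inequalities being exactly the condition for $w$-stability of such a facet, and notably independent of $w$); for each such unordered pair $\{k,\ell\}$, Theorem \ref{OrbitsPreserved} shows $\Phi$ acts on $[\B^{k,\ell}_{\{1\}}], [\B^{k,\ell}_{\{2\}}]$ through a well-defined permutation $\sigma_{\{k,\ell\}} \in S_2$. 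The hypothesis says precisely that $\sigma_{\{g-1,1\}} = \mathrm{id}$, and the content of the proposition is that then $\sigma_{\{k,\ell\}} = \mathrm{id}$ for all remaining pairs.

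To propagate this, I would introduce auxiliary edge-labelled $3$-vertex graphs in $\V^3_{g,w}$ --- ``gadgets'' $\G$ whose underlying graph is a path $V_1\text{--}W\text{--}V_2$ carrying prescribed loops at each vertex, prescribed multiplicities of the two bundles of path-edges, marking $1$ on $V_1$ and marking $2$ on $V_2$. Contracting the non-loop edges yields a non-loop contraction deck $\D^\G_\tau$ consisting of two $2$-vertex facets $F_1, F_2$, whose loop-data and marking-placements can be read off from the gadget parameters. Since $\Phi$ preserves the number of vertices (Theorem \ref{VFiltrationPreserved}), the set of loop-indices and the same-vertex partition of loop-indices (Lemma \ref{cyclespreserved}), and the multiplicities of bundles of parallel edges (Lemma \ref{cyclespreserved} applied to $2$-cycles), the image $\Phi\G$ is a genus-$g$ graph on $3$ vertices with deck $\Phi(\D^\G_\tau)$, and by Theorem \ref{Reconstruction} it is the \emph{unique} such graph. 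The crucial point is a realizability dichotomy: when $F_1, F_2$ are distinct ambiguous facets, the ``partially flipped'' deck obtained by applying a transposition to exactly one of them is, for suitable gadget parameters, not the deck of any $w$-stable $3$-vertex graph --- one is forced to put zero loops on a valence-two unmarked vertex, destroying stability. For such a gadget, this forces $\sigma_{\{k_1,\ell_1\}} = \sigma_{\{k_2,\ell_2\}}$, where $\{k_i,\ell_i\}$ is the type of $F_i$. Arranging parameters so that $F_2$ always has $k_2+\ell_2 = g$ while $F_1$ runs over the other types, and bootstrapping --- first through the types $\{k,0\}$, then back up to the remaining sum-$g$ types, with a few extra gadget shapes (unequal bundle multiplicities) for the central types in odd genus --- one chains these equalities from the reference type $\{g-1,1\}$ to every remaining type.

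The step I expect to be the main obstacle is exactly this last one: assembling a family of gadgets that simultaneously (i) are $w$-stable for \emph{every} $w \in \Q^2 \cap (0,1]^2$, the binding constraint being stability at the low-valence auxiliary vertices, which limits how few loops and markings they may carry; (ii) have partially-flipped decks that are genuinely unrealizable, so that the dichotomy fires; and (iii) collectively link every remaining facet type to $\{g-1,1\}$. Each non-realizability check is routine --- one identifies the unique candidate $3$-vertex graph with a prescribed deck from the shape of a facet with a $1$-edge cut and observes that it is degenerate --- but it must be done for each shape of gadget, and I expect small genus (certainly $g=2$, where the reference facet $\B^{g-1,1}_x$ is itself non-ambiguous, and possibly $g=3$) to need separate, direct treatment.
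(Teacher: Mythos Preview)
Your approach is essentially the paper's: the paper's entire proof is a citation to \cite[Proposition 5.13(b)]{K} together with the remark that that argument never invokes a $w$-unstable simplex, and your gadget-based plan (reduce to the ambiguous facets $[\B^{k,\ell}_{\{x\}}]$ with $k\neq\ell$, then chain their $S_2$-permutations together via three-vertex expansions whose partially-flipped non-loop contraction decks are unrealizable) is exactly the technique of that reference and of Propositions \ref{k0A}--\ref{klA} here. Your caveat that $g=2$ needs separate handling because the reference facet $\B^{g-1,1}_x=\B^{1,1}_x$ has $k=\ell$ and so imposes no constraint is well observed.
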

    \begin{proof}
    This proof follows from that of ~\cite[Proposition 5.13(b)]{K}: in that argument, no reference is made to $w$-unstable simplices in order to show that a given $w$-stable simplex is fixed.
    \end{proof}
    Given the result of Proposition \ref{length2}, we assume that $n \geq 3$ for the remainder of this section. Since $g \geq 1$, the graph \[\B^{0,0}_{x}:= \B^{0,0}_{\{x\}}\] is stable for each $x \in \{1, \ldots, n \}$. Theorem \ref{OrbitsPreserved} gives a unique permutation $\sigma \in S_n$ such that $\Phi[\B^{0,0}_{x}] = [\B^{0,0}_{\sigma(x)}]$ for all $x \in \{1, \ldots, n \}$. We will first show that $\sigma \in \Aut(K_w)$, and then that $\Phi$ acts as $\sigma$ on all elements of $\V^{2}_{g, w}[g]$. For the remainder of the paper, by an \textit{\textbf{expansion}} of a graph $\H \in \mathrm{Ob}(\Gamma_{g, w})$ we will mean a graph $\G$ with $|E(\G)| > |E(\H)|$ admitting a $\Gamma_{g, w}$-morphism to $\H$.
    \begin{lem}\label{PermutationisStable}
    Suppose $g \geq 1$, $n \geq 3$, and $w \in \Q^n \cap (0, 1]^n$, and suppose $\Phi \in \Aut(\Delta_{g, n})$. Let $\sigma \in S_n$ be the unique permutation such that
    \[\Phi[\B^{0,0}_x] = [\B^{0,0}_{\sigma(x)}] \]
    for all $x \in \{ 1, \ldots, n\}$. Then $\sigma \in \Aut(K_w)$.
    \end{lem}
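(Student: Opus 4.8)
The plan is to argue by contradiction: suppose $\sigma \notin \Aut(K_w)$, so there exists a subset $S \subseteq \{1,\ldots,n\}$ with $w(S) \leq 1$ but $w(\sigma(S)) > 1$ (or the reverse — we may as well arrange it this way by replacing $\sigma$ with $\sigma^{-1}$, since $\Aut(\Delta_{g,w})$ is a group and the hypothesis on $\sigma$ is symmetric in $\sigma$ and $\sigma^{-1}$). Pick such an $S$ of minimal size; then $|S| \geq 2$, and in fact we may take $S = \{x, y\}$ with two elements: if $|S| > 2$, removing an element keeps $w(S) \leq 1$, and by minimality the image still has weight $\leq 1$, so we can shrink until the image first exceeds $1$, landing on a pair. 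So we have $x, y$ with $w_x + w_y \leq 1$ but $w_{\sigma(x)} + w_{\sigma(y)} > 1$.

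First I would exhibit a $w$-stable graph witnessing the constraint $w_x + w_y \leq 1$ that is \emph{not} available once we apply $\sigma$. Concretely, consider an expansion of $\B^{0,0}_{\{x,y\}}$ (or of an appropriate $\B^{k,\ell}$) in which the markings $x$ and $y$ are pulled off onto a separate rational vertex joined to the rest of the graph: i.e. a three-vertex graph $\G$ with a vertex $v_0$ of weight $0$ supporting exactly the markings $\{x,y\}$, attached by a single edge (a bridge) to a two-vertex subgraph carrying genus $g$ and the remaining markings. This $v_0$ satisfies $2\cdot 0 - 2 + \val(v_0) + w(m^{-1}(v_0)) = -2 + 1 + (w_x + w_y) \leq 0$ — wait, that is unstable; so instead I attach $v_0$ by \emph{two} edges (creating a $2$-cycle), giving $\val(v_0) = 2$ and stability $-2 + 2 + (w_x+w_y) > 0$, which holds since weights are positive. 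Such a $\G$ lies in $\Gamma_{g,w}$ and is an expansion of $\B^{0,0}_{\{x,y\}}$; the key point is that the analogous graph with $\{x,y\}$ replaced by $\{\sigma(x),\sigma(y)\}$ is \emph{not} $w$-stable, because $w_{\sigma(x)} + w_{\sigma(y)} > 1$ forces no component carrying both of those markings can be contracted, but more to the point, the $2$-cycle vertex $v_0$ would need $w(m^{-1}(v_0)) = w_{\sigma(x)} + w_{\sigma(y)}$, and while $> 1$ doesn't itself violate stability at $v_0$, the obstruction is at the level of the \emph{simplicial complex} $K_w$: the marking set $\{\sigma(x),\sigma(y)\}$ is not a face of $K_w$, hence cannot be the full marking set of a single rational bivalent vertex that arises as a contraction target. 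I would make this precise by using Lemma \ref{cyclespreserved}: $\Phi$ preserves which index-subsets cut out $2$-cycles, and by Lemma \ref{WeakPreserved} $\Phi$ only redistributes markings on the underlying edge-labelled graph; combining these, $\Phi[\G,\tau]$ must be a $w$-stable graph with the same edge-labelled combinatorics as $\G$, whose $2$-cycle vertex $v_0$ now supports the markings $\sigma(\{x,y\})$ (since $\Phi$ acts on $\B^{0,0}_x$ via $\sigma$, and $\G$ contracts to $\B^{0,0}_x$ and to $\B^{0,0}_y$ along bridge-contractions). That image is not an object of $\Gamma_{g,w}$ — contradiction.

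The cleanest way to pin down "$\Phi[\G,\tau]$ sends the markings $x,y$ to $\sigma(x),\sigma(y)$ on the distinguished vertex" is to track contractions: $\G$ admits $\Gamma_{g,w}$-morphisms contracting one of the two parallel edges at $v_0$ to reach a graph $\G'$ that contracts further to $\B^{0,0}_{\{x,y\}}$, and contracting the bridge merges $v_0$ into the genus-carrying part. Using $\Phi d_i = d_i \Phi$ repeatedly, the image $\Phi[\G,\tau]$ must map under the corresponding face maps to $\Phi[\B^{0,0}_{\{x,y\}}]$-type simplices, which by Theorem \ref{OrbitsPreserved} and the established action of $\Phi$ on the one-element sets force the vertex labels to be $\sigma(x)$ and $\sigma(y)$. (Here I should double-check the small-case bookkeeping: $n \geq 3$ ensures the "remaining markings" set $\{1,\ldots,n\}\setminus\{x,y\}$ is nonempty so the two-vertex genus-$g$ part is genuinely stable and the graphs $\B^{0,0}_x$ are defined, and $2g-2+n \geq 3$ gives enough edges to run the argument; the degenerate configurations are handled exactly as the $(k,\ell)=(0,0)$ case in Theorem \ref{OrbitsPreserved}.)

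I expect the main obstacle to be the second paragraph's claim that the $\sigma$-image graph genuinely fails to be $w$-stable in a way that $\Phi$ cannot repair — i.e. ruling out that $\Phi$ "fixes" the instability by some clever different redistribution of \emph{all} the markings, not just $x$ and $y$. This is exactly where Lemma \ref{WeakPreserved} is essential: it says $\Phi$ acts on a facet of $\V^2_{g,w}$ (and, via face maps, on its expansions inside $\V^3$) by \emph{only} altering the marking function on a \emph{fixed} underlying edge-labelled weighted graph, so there is no freedom to change the combinatorial type; the marking on the distinguished bivalent vertex is then forced, and its weight must be $\leq 1$ for the result to be $w$-stable. Once that rigidity is in hand, the contradiction is immediate, so the write-up will mostly consist of setting up the witness graph $\G$ carefully and invoking Lemmas \ref{WeakPreserved} and \ref{cyclespreserved} together with Theorem \ref{OrbitsPreserved}.
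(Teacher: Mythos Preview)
Your approach has a fundamental orientation error. You attempt to build a $w$-stable graph that ``witnesses'' the constraint $w_x + w_y \leq 1$ and argue that its $\Phi$-image, now carrying $\{\sigma(x),\sigma(y)\}$ with $w_{\sigma(x)} + w_{\sigma(y)} > 1$, is not $w$-stable. But $w$-stability is an inequality of the form
\[
2h(v) - 2 + \val(v) + w(m^{-1}(v)) > 0,
\]
so increasing the total weight at a vertex only makes the inequality \emph{easier} to satisfy. You notice this yourself (``while $>1$ doesn't itself violate stability at $v_0$'') and then appeal to $K_w$, claiming the marking set of a vertex must be a face of $K_w$. That is false: there is no such requirement in the definition of a $w$-stable graph, and indeed the vertex of $\B^{g,0}_{S^c}$ opposite the loops has valence $1$, genus $0$, and marking set $S$ with $w(S) > 1$, which is precisely \emph{not} a face of $K_w$. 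So the image graph you construct \emph{is} an object of $\Gamma_{g,w}$, and there is no contradiction.

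Relatedly, your reduction to a pair is broken: minimality of $|S|$ gives that proper subsets $S' \subsetneq S$ satisfy $w(\sigma(S')) \leq 1$, so shrinking never makes the image ``first exceed $1$'' again, and minimal counterexamples of size $\geq 3$ genuinely exist (e.g.\ $w = (0.1,0.1,0.1,0.4,0.4,0.4)$ with $\sigma$ swapping the two blocks). The paper instead runs the argument in the other direction: start from $w(S) > 1$, observe that this is exactly what makes the graph $\B^{g,0}_{S^c}$ stable (the bare vertex has valence $1$ and genus $0$, so needs $w(S) > 1$), and then use that $\B^{g,0}_{S^c}$ shares an expansion with $\B^{0,0}_x$ precisely for $x \in S^c$ to conclude $\Phi(S^c) = \sigma(S^c)$ and hence $w(\sigma(S)) > 1$.
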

    \begin{proof}
    We will show that for $S \subseteq \{1, \ldots, n \}$, we have that $w(S)> 1$ if and only if $w(\sigma(S)) > 1$. By switching $\sigma$ with $\sigma^{-1}$, it suffices to just prove one direction. We may suppose that some $S$ with $w(S) > 1$ exists, because otherwise $\Aut(K_w) \cong S_n$. Since $w(S) > 1$, the graph $\B^{g, 0}_{S^c}$ is stable. Moreover, the graph $\B^{g, 0}_{S^c}$ shares an expansion with $\B^{0,0}_{x}$ if and only if $x \in S^c$. Therefore, if we let $\Phi(S^c)$ be as determined by Theorem \ref{OrbitsPreserved}, we have $\Phi(S^c) = \sigma(S^c)$. This implies that the graph $\B^{g,0}_{\sigma(S^c)}$ is stable, i.e. that $w((\sigma(S^c))^c) = w(\sigma(S)) > 1$, as we wanted to show.
    \end{proof}
    
    Given the result of Lemma \ref{PermutationisStable}, Theorem \ref{Auts} is rendered equivalent to the following result.
    
    \begin{thm}\label{FinalReduction}
    Fix $g \geq 1, n \geq 3$, and say that $\Phi \in \Aut(\Delta_{g, w})$ fixes each simplex $[\B^{0, 0}_x]$. Then $\Phi$ fixes all of the simplices $[\B^{k, \ell}_A]$.
    \end{thm}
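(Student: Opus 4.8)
The plan is to combine Theorem \ref{RestrictionInjects} with Theorem \ref{OrbitsPreserved} to turn the statement into a purely combinatorial bookkeeping, and then to run the same inductive reconstruction argument used for $w = (1^{(n)})$ in \cite[\S 5.3]{K}. By Theorem \ref{RestrictionInjects} it suffices to prove that $\Phi$ restricts to the identity on $\V^2_{g,w}$; since the facets of $\V^2_{g,w}$ are exactly the simplices $[\B^{k,\ell}_A]$, and every simplex of $\V^2_{g,w}$ is a face of such a facet (any two-vertex $w$-stable graph of genus $g$ expands to one with genus-zero vertices and $g+1$ edges), the action of $\Phi$ on all of $\V^2_{g,w}$ is determined by its action on the facets via the naturality squares (\ref{Symmetric}) and (\ref{Simplicial}), with Lemma \ref{cyclespreserved} pinning down the edge-labellings. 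Thus it is enough to show $\Phi[\B^{k,\ell}_A] = [\B^{k,\ell}_A]$ for every valid triple $(A;k,\ell)$. By Theorem \ref{OrbitsPreserved} we already know $\Phi[\B^{k,\ell}_A] = [\B^{k,\ell}_{\Phi(A)}]$ with $|\Phi(A)| = |A|$, so the entire content is to prove $\Phi(A) = A$, given that this holds for singletons.

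I would prove this by induction, ordering the facets by a complexity parameter built from $|A|$ (and, for $g \geq 2$, also from the number of loops $k + \ell$), so that the singleton simplices $[\B^{0,0}_{\{x\}}]$ form the base case. For the inductive step at $[\B^{k,\ell}_A]$, I would exhibit an explicit three-vertex $w$-stable expansion $\G \in \Gamma_{g,w}$, obtained by splitting one vertex of $B^{k,\ell}_A$ (distributing its markings, parallel edges, and loops so that $b_1(\G) = g$), whose nonloop contraction deck $\D^{\G}_\tau$ consists of $[\B^{k,\ell}_A]$ together with facets that are strictly smaller in the chosen ordering, hence already known to be fixed by $\Phi$. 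Since $\Phi$ preserves the number of vertices and the deck of $\Phi\G$ is obtained from $\D^{\G}_\tau$ by applying $\Phi$ to each entry (Lemma \ref{cyclespreserved}(a) ensuring the loop indices match up), all of its entries except possibly the one coming from $[\B^{k,\ell}_A]$ agree with those of $\D^{\G}_\tau$; Theorem \ref{Reconstruction}, applicable because $|V(\G)| = |V(\Phi\G)| = 3$, then forces $\Phi\G \cong \G$, whence $\Phi[\B^{k,\ell}_A] = [\B^{k,\ell}_A]$ as well, closing the induction. Lemmas \ref{WeakPreserved} and \ref{cyclespreserved} are what guarantee that $\Phi\G$ is again a graph of the expected combinatorial shape (correct loops, parallel bundles, and per-vertex marking counts), so that reading off $\Phi$ on the remaining deck entry is legitimate. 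For $g = 1$, where the only facet types are $(0,0)$ and $(1,0)$ and the only three-vertex genus-one expansions are triangles and ``double edge plus pendant edge'' graphs, this specializes to a short explicit argument.

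The main obstacle, and the only place the weighted case genuinely diverges from \cite{K}, is ensuring that the connecting gadgets $\G$ actually lie in $\Gamma_{g,w}$: in the weighted setting the facets appearing in $\V^2_{g,w}$ are constrained by the stability inequalities at $v_1$ and $v_2$, many expansions available in $\Gamma_{g,n}$ fail to be $w$-stable, and one must choose the marking distribution on the split vertices of $\G$ so that \emph{their} stability inequalities hold as well. The key point is that $w$-stability of the target facet $[\B^{k,\ell}_A]$ already supplies the slack needed at each vertex of $\G$ — the new vertices carry marking sets that are unions or subsets of $A$ and $A^c$ together with enough incident half-edges — so that, exactly as in the proof of Proposition \ref{length2}, the argument never needs to invoke a $w$-unstable simplex. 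Carrying this out is a careful but routine case analysis, supplemented by separate treatment of the smallest values of $g$ and $n$, where the inductions degenerate.
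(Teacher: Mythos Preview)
Your overall strategy—build three-vertex expansions linking $[\B^{k,\ell}_A]$ to previously fixed facets—is exactly the paper's, but the key deductive step you describe is a misapplication of Theorem \ref{Reconstruction}. That theorem requires the \emph{full} nonloop contraction decks $\D^{\G}_{\tau}$ and $\D^{\Phi\G}_{\Phi\tau}$ to agree; you only know they agree at every index except the one $j$ recording $[\B^{k,\ell}_A]$, which is precisely the entry in question. From partial deck agreement alone you cannot invoke the reconstruction theorem to conclude $\Phi\G \cong \G$, so the induction as written does not close. (Theorem \ref{Reconstruction} is in fact used only to prove Theorem \ref{RestrictionInjects}, not in the proof of Theorem \ref{FinalReduction} itself.)

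What the paper does instead—and what you would need to do—is a direct analysis of $\Phi\G$ that bypasses reconstruction. Lemma \ref{cyclespreserved} determines the weak isomorphism type of $\Phi\G$ (its cycle and loop structure), so one can name the marking sets $B_1,B_2,B_3$ on its three vertices as unknowns. The known fixed contractions $d_i\Phi[\G] = d_i[\G]$ at the other indices then impose explicit set-theoretic constraints on the $B_r$; one solves these to show, for each $x \in A$, that $x$ lies in the correct union $B_r \cup B_s = \Phi(A)$, whence $A \subseteq \Phi(A)$ and equality follows from $|\Phi(A)| = |A|$. This is carried out in three stages (Propositions \ref{00A}, \ref{k0A}, \ref{klA}): first all $[\B^{0,0}_A]$, then $[\B^{k,0}_A]$, then $[\B^{k,\ell}_A]$, with a separate treatment of the case $|A| = n/2$ where $\Phi(A)$ is only determined up to complement. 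Your inductive ordering by $|A|$ and $k+\ell$ is not quite the right one, since the argument for $[\B^{0,0}_A]$ with large $|A|$ uses only the singletons $[\B^{0,0}_x]$ as anchors, not smaller $A$'s.
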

    
    Theorem \ref{FinalReduction} is further broken up into the three intermediate results Proposition \ref{00A}, Proposition \ref{k0A}, and Proposition \ref{klA}, whose statements and proofs make up the remainder of this section.
    
    \begin{prop}\label{00A}
    Fix $g \geq 1, n \geq 3$, and suppose $\Phi \in \Aut(\Delta_{g, w})$ fixes the simplices $[\B^{0, 0}_x]$ for all $x \in \{1, \ldots, n\}$. Then, $\Phi$ fixes the simplices $[\B^{0, 0}_A]$ for all $A \subseteq \{ 1, \ldots, n\}$.
    \end{prop}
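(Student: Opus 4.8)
The plan is to induct on $s := \min(|A|,\,n-|A|)$; using the graph isomorphism $\B^{0,0}_C \cong \B^{0,0}_{C^c}$ I may assume $|A| \le n/2$. The cases $s\le 1$ are immediate: $[\B^{0,0}_\emptyset]$ is the unique simplex of its shape possessing an empty vertex, so it is fixed, and the singletons $[\B^{0,0}_x]$ are fixed by hypothesis. So I fix $A$ with $2\le |A|\le n/2$, assume the proposition for all smaller $s$, and choose $x\in A$.

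The engine of the induction is an auxiliary $w$-stable graph $\G_{A,x}$ living one dimension above $\V^2_{g,w}$: it has three vertices supporting $A\setminus\{x\}$, $A^c$, and $\{x\}$, with $g$ parallel edges between the first two and a single edge joining the $\{x\}$-vertex to each of the other two. First I would check $w$-stability (the only delicate case is $g=1$, where it uses $A\setminus\{x\}\neq\emptyset$ and $A^c\neq\emptyset$, both guaranteed since $2\le|A|\le n/2$ and $n\ge 3$), note that $b_1(\G_{A,x})=g$ and $|E(\G_{A,x})|=g+2$, so $[\G_{A,x}]\in\Delta_{g,w}[g+1]$, and compute its nonloop contraction deck directly: it consists (up to isomorphism) of $g$ copies of $[\B^{g-1,0}_{\{x\}^c}]$ (contract one of the $g$ parallel edges), one copy of $[\B^{0,0}_A]$, and one copy of $[\B^{0,0}_{A\setminus x}]$ (using $(A\setminus x)^c=A^c\cup\{x\}$).

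Now I would apply $\Phi$. By Theorem~\ref{VFiltrationPreserved} the graph $\Phi\G_{A,x}$ again has three vertices; by Lemma~\ref{cyclespreserved}(a) it is loopless and has exactly as many $3$-cycles as $\G_{A,x}$, namely $g>0$. A one-line count of loopless three-vertex graphs with $g+2$ edges and exactly $g$ triangles shows its edge-multiplicity profile must be $\{1,1,g\}$; writing the $g$-edge pair as $\{a,c\}$ and the apex as $b$, its deck is $\{[\B^{0,0}_{m(c)}],\,[\B^{0,0}_{m(a)}],\,[\B^{g-1,0}_{m(b)^c}]^{\times g}\}$, where $m(a),m(b),m(c)$ partition $\{1,\dots,n\}$. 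On the other hand this deck is the $\Phi$-image of the deck of $\G_{A,x}$; by induction $\Phi$ fixes $[\B^{0,0}_{A\setminus x}]$, and it also fixes $[\B^{g-1,0}_{\{x\}^c}]$ --- which for $g=1$ is just $[\B^{0,0}_x]$, and for $g\ge 2$ follows from the same device applied to the three-vertex graph with vertices marked $\emptyset$, $\{x\}$, $\{x\}^c$ (deck: $[\B^{0,0}_x]$, $[\B^{0,0}_\emptyset]$, and $g$ copies of $[\B^{g-1,0}_{\{x\}^c}]$), the residual $\{x\}\leftrightarrow\{x\}^c$ ambiguity being killed by the size constraint in Theorem~\ref{OrbitsPreserved}. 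Comparing the two descriptions of the deck of $\Phi\G_{A,x}$ forces $m(b)=\{x\}$, so $m(a),m(c)$ partition $\{x\}^c$; whichever of $m(a),m(c)$ carries the simplex $[\B^{0,0}_{A\setminus x}]$ must then equal $A\setminus\{x\}$ itself (it avoids $x$), so the other equals $A^c$, and the last deck entry is $[\B^{0,0}_{A^c}]=[\B^{0,0}_A]$. By the matching this is $\Phi[\B^{0,0}_A]$, which completes the induction.

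The main obstacle I anticipate is not any one step but the bookkeeping that links them: verifying cleanly that $\Phi$ carries the nonloop contraction deck of $\G_{A,x}$ onto that of $\Phi\G_{A,x}$ (using that $\Phi$ commutes with the face maps up to the symmetric action), pinning down $\Phi\G_{A,x}$ from its combinatorial invariants --- this is precisely the kind of reconstruction packaged by Theorem~\ref{Reconstruction} --- and tracking the residual $A\leftrightarrow A^c$ ambiguity of Theorem~\ref{OrbitsPreserved} carefully enough that it collapses, as above, to the honest equality $\Phi[\B^{0,0}_A]=[\B^{0,0}_A]$ rather than merely $\Phi[\B^{0,0}_A]\in\{[\B^{0,0}_A],[\B^{0,0}_{A^c}]\}$. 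I would also take care to recheck all stability inequalities for the auxiliary graphs in the boundary cases $g=1$, $|A|=2$, and $|A|=n/2$.
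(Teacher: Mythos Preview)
Your approach is correct and yields a complete proof, but it is organized differently from the paper's.

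Both proofs pivot on a three-vertex auxiliary graph with edge-multiplicity profile $\{1,1,g\}$, but with different marking placements. The paper puts the $g$ parallel edges between the $\{x\}$-vertex and the $A^c$-vertex (so the apex is $A\setminus\{x\}$); the two single-edge contractions then yield $[\B^{0,0}_x]$ and $[\B^{0,0}_A]$, and one reads off directly that $x\in\Phi(A)$ for every $x\in A$, whence $A=\Phi(A)$ by the size constraint in Theorem~\ref{OrbitsPreserved}. That step relies on the \emph{uniqueness} of $\Phi(A)$, which fails when $|A|=n/2$; the paper is then forced into a separate analysis of that case, splitting further into $n\ge 5$ (a short argument via common expansions with the $[\B^{0,0}_S]$ for $|S|=2$) and $n=4$ (a longer chase through two additional auxiliary graphs $\T_2,\T_3$).

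Your graph instead has the $g$ parallel edges between $A\setminus\{x\}$ and $A^c$ (apex $\{x\}$), so one deck entry is $[\B^{g-1,0}_{\{x\}^c}]$ rather than a $[\B^{0,0}_\bullet]$. You therefore need to know in advance that $\Phi$ fixes this simplex, and you supply a short direct argument for it (really a special case of the paper's later Proposition~\ref{k0A}). The payoff is that, combined with the inductive hypothesis on $[\B^{0,0}_{A\setminus x}]$, your matching pins down the third deck entry as $[\B^{0,0}_A]$ \emph{without} invoking the uniqueness of $\Phi(A)$, so the $|A|=n/2$ case needs no special treatment and the substantial case analysis occupying the second half of the paper's proof disappears entirely. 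The cost is the small auxiliary lemma and the inductive bookkeeping; overall the two approaches come out to comparable length, with yours arguably the more uniform.
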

   
    \begin{proof}
    If $|A|$ or $|A^c| = 0$, then $[\B^{0,0}_A]$ is fixed by by Theorem 3.6.  If $|A|$ or $|A^c| = 1$ then $\B^{0, 0}_A$ = $\B^{0, 0}_x$ for some $x \in \{ 1, \ldots, n\}$, so it's fixed by hypothesis. So, assume that $|A|$ and $|A^c| \geq 2$, and for now we also assume that $|A| \neq n/2$, so in particular $|A| \neq |A^c|$ and there is a unique $\Phi(A) \subseteq \{1, \ldots, n \}$ with $|\Phi(A)| = |A|$ and $\Phi[\B^{0, 0}_A] = [\B_{\Phi(A)}^{0,0}]$. For each $x \in A$, consider the graph $\T_1$ of Figure \ref{B00A-fig1}, with some edge-labelling $\tau$ such that $d_i[\T_1, \tau] = [\B^{0,0}_x]$ and $d_j[\T_1, \tau] = [\B_{A}^{0, 0}]$; we put $[\T_1]:= [\T_1, \tau]$. Then, by Lemma \ref{cyclespreserved}, we have that $\Phi[\T_1]$ is weakly isomorphic to $[\T_1]$. Let $B_1, B_2, B_3 \subseteq \{1, \ldots, n\}$ be the markings on the vertices of $\Phi[\T_1]$ as indicated by Figure \ref{B00A-fig1}.
    
    \begin{figure}[h]
        \centering
        \includegraphics[scale=1.2]{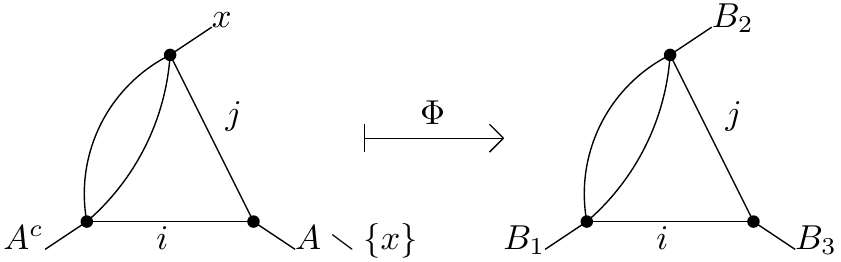}
        \caption{The simplex $[\T_1]$ and its image under $\Phi$}
        \label{B00A-fig1}
    \end{figure}
    
    Then, since $[\B_{x}^{0,0}]$ is fixed, we must have that either $B_1 \cup B_3 = \{x\}$ or $B_1 \cup B_3 = \{x\}^c$. But if $B_1 \cup B_3 = \{ x\}$, then by stability $B_3 = \{ x\}$ and $B_1 = \varnothing$. Upon contracting edge $j$ in both graphs, we would then have that the $S_n$-orbit of $[\B_{A}^{0,0}]$ is not preserved by $\Phi$, which is impossible. Therefore, we have $B_1 \cup B_3 = \{x\}^c$ and $B_2 = \{x\}$.  Moreover, since $S_n$-orbits of graphs with two vertices are preserved, we have $|A^c| + 1 = |B_1| + |B_2|$, so $|B_1| = |A^c|$, hence $|B_3| = |A| - 1$. It follows that $B_2 \cup B_3 = \Phi(A)$, hence $x \in \Phi(A)$. It follows that $A \subseteq \Phi(A)$, so in fact $A = \Phi(A)$ by Theorem \ref{OrbitsPreserved}.
    
    It is only left to show that $\Phi[\B^{0,0}_A] = [\B^{0,0}_A]$ when $|A| = n/2$. Since $n \geq 3$, this case only arises when $n \geq 4$ and $n$ is even. We treat the $n = 4$ case and the $n \geq 5$ case separately. If $n \geq 5$, then the set of graphs $\B_{S}^{0,0}$ such that $|S| = 2$ sharing an expansion with $\B_{A}^{0,0}$ are precisely those $S$ such that $S \subseteq A$ or $S \subseteq A^c$. Since $n \geq 5$, we know all of the $\B_{S}^{0,0}$ with $|S| = 2$ are fixed, hence for any choice of $\Phi(A)$ such that $\Phi[\B_{A}^{0,0}] = [\B^{0,0}_{\Phi(A)}]$, we must have that 
    \[ \binom{A}{2} \cup \binom{A^c}{2} =  \binom{\Phi(A)}{2} \cup \binom{\Phi(A)^c}{2}. \]
    This implies that $A = \Phi(A)$ or $A = \Phi(A)^c$, so in particular we have that $[\B_{A}^{0,0}]$ is fixed by $\Phi$.
    
    Finally consider the case when $n = 4$ and $|A| = 2$. Say $|A| = \{x, y\}$ and $A^c = \{u, v\}$, and suppose for sake of contradiction we have
    \[\Phi[\B_{\{x, y\}}^{0,0}] = [\B^{0,0}_{\{x, v\}}]. \]
    Then consider an expansion $\mathbf{T}_2$ of $\B_{\{x, y\}}^{0,0}$ as in Figure \ref{n=4case}, with an edge labelling $\tau$ so that $d_i[\mathbf{T}_2, \tau] = [\B^{0,0}_{\{x, y\}}]$ and $d_j[\mathbf{T}_2, \tau] = [\B^{0,0}_{x}]$; we put $[\mathbf{T}_2] = [\mathbf{T}_2, \tau]$. We let $B_1', B_2', B_3' \subseteq \{1, \ldots, n \}$ denote the marking sets on the vertices of $\Phi[\T_2]$ as in Figure \ref{n=4case}. 
    
    \begin{figure}[h]
        \centering
        \includegraphics[scale=1.2]{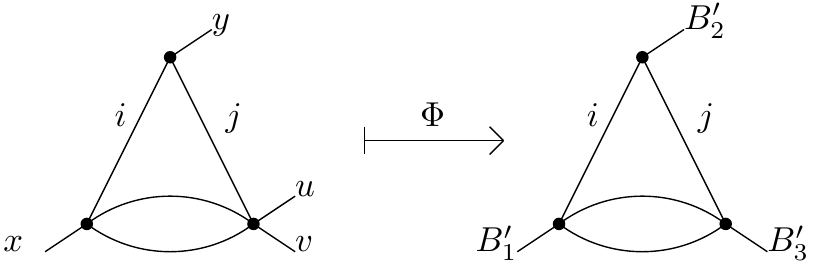}
        \caption{The simplex $[\mathbf{T}_2]$ and its image under $\Phi$}
        \label{n=4case}
    \end{figure}
    
    Then, since $\Phi[\B^{0,0}_{x}] = [\B^{0,0}_{x}]$, we must have $d_j\Phi[\mathbf{T}_2] = [\B^{0,0}_{x}]$, so either $B_2' \cup B_3' = \{x\}$ or $B_1' = \{x\}$. If $B_2' \cup B_3' = \{x\}$, then by stability we have $B_2' = \{x\}$ while $B_3' = \varnothing$, but then \[\Phi[\B_{\{x, y\}}^{0,0}] = d_i \Phi[\mathbf{T}_2] = [\B^{0,0}_{\varnothing}], \]
    which is a contradiction. Therefore $B_1' = \{x\}$ while $B_2' \cup B_3' = \{x\}^c$. Since $d_i \Phi[\mathbf{T}_2] = [\B_{\{x, v\}}^{0,0}]$ by hypothesis, it follows that $B_2' = \{v\}$ and $B_3' = \{u, y \}$. From this we may conclude that
    \[\Phi[\B^{0, g - 1}_{y}] = [\B^{0, g- 1}_{v}]. \]
    This clearly contradicts our hypothesis if $g = 1$, so we may now suppose that $g \geq 2$. 
    
    Finally, consider the simplex $[\mathbf{T}_3]$ and its image under $\Phi$ in Figure \ref{n=4casetwo}, where the edge-labelling is such that $d_i[\mathbf{T}_3] = [\B^{0,0}_y]$ and $d_j[\mathbf{T}_3] = [\B^{0, g-1}_y]$. Let $B_1'', B_2'', B_3'' \subseteq \{1, \ldots, n \}$ be the marking sets of $\Phi[\T_3]$ as in Figure \ref{n=4casetwo}.
    
    \begin{figure}[h]
        \centering
        \includegraphics[scale=1.2]{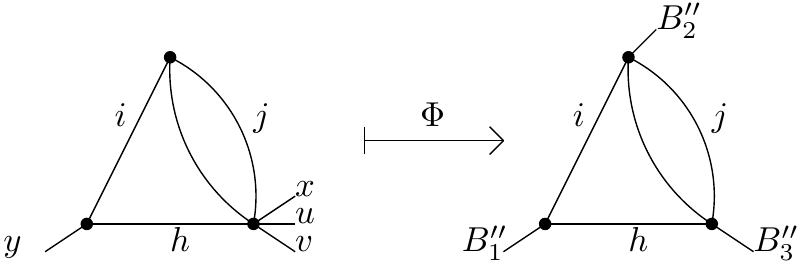}
        \caption{The simplex $[\mathbf{T}_3]$ and its image under $\Phi$}
        \label{n=4casetwo}
    \end{figure}
    
    Then, as we have $\Phi[\B^{0, g - 1}_{y}] = [\B^{0, g- 1}_{v}]$, it must be that $B_1''= \{v\}$ while $B_2'' \cup B_3 ''= \{v\}^c$. On the other hand, since $\Phi[\B^{0,0}_y] = [\B^{0,0}_y]$, we must have $d_i\Phi[\mathbf{T}_3] = [\B^{0,0}_y]$, hence $B_3'' = \{y\}$ and $B_2'' = \{x, v \}$. From this it follows that $\Phi d_h[\mathbf{T}_3] = \B^{0,0}_{\{x, v\}}$. This is a contradiction as $d_h[\mathbf{T}_3] = [\B^{0,0}_\varnothing]$ must be fixed by Theorem \ref{OrbitsPreserved}. This completes the proof. \qedhere
    

    \end{proof}
    
    The next step is to show that any automorphism which fixes the simplices $[\B^{0,0}_A]$ must also fix the simplices $[\B^{k, 0}_A]$.
    
    \begin{prop}\label{k0A}
    Fix $g \geq 1, n \geq 3$, and suppose $\Phi \in \Aut(\Delta_{g, w})$ fixes all simplices of the form $[\B^{0, 0}_A]$. Then, $\Phi$ fixes all simplices of the form $[\B^{k, 0}_A]$ with $k \leq g$.
    \end{prop}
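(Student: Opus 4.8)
The proof goes by induction on $k$, the base case $k = 0$ being the hypothesis. So fix $k \geq 1$ and assume that $\Phi$ fixes every simplex $[\B^{k-1,0}_{A'}]$. By Theorem \ref{OrbitsPreserved} (and since $(k,0) \neq (0,0)$), for each $A$ there is a unique subset $\Phi(A)$ with $|\Phi(A)| = |A|$ and $\Phi[\B^{k,0}_A] = [\B^{k,0}_{\Phi(A)}]$, so it suffices to prove $\Phi(A) = A$. We may assume $\B^{k,0}_A$ is $w$-stable, and the case $|A| = 0$ is immediate since then $\Phi(A) = \varnothing$ for cardinality reasons.

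Suppose first that $|A| \geq 2$. For each $x \in A$ I build a three-vertex expansion $\T_x$ of $\B^{k,0}_A$: on vertices $u_1, u_2, v_2$, place $k-1$ loops and the marking $\{x\}$ on $u_1$, the marking $A \smallsetminus \{x\}$ on $u_2$, and the marking $A^c$ on $v_2$; join $u_1$ to $u_2$ by a double edge, $u_1$ to $v_2$ by a single edge, and $u_2$ to $v_2$ by $g-k$ edges. This graph has genus $g$ and $g+2$ edges, and it is $w$-stable: the inequalities at $u_1$ and $u_2$ hold automatically because $k \geq 1$ and $A \smallsetminus \{x\} \neq \varnothing$, and the inequality at $v_2$ is exactly the one needed for the stability of $\B^{k,0}_A$. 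Contracting one of the two parallel $u_1$--$u_2$ edges recovers $[\B^{k,0}_A]$, while contracting the single $u_1$--$v_2$ edge yields $[\B^{k-1,0}_{\{x\}\cup A^c}]$, which is $\Phi$-fixed by the inductive hypothesis. As in the proof of Proposition \ref{00A}, Lemma \ref{cyclespreserved} shows $\Phi[\T_x]$ is weakly isomorphic to $[\T_x]$, so it is the same edge-labelled graph with a redistributed marking, say sets $B_1, B_2, B_3$ on $u_1, u_2, v_2$. Applying $\Phi$ to the two contractions gives $\Phi(A) = B_1 \cup B_2$ from the first and $[\B^{k-1,0}_{B_1 \cup B_3}] = [\B^{k-1,0}_{\{x\}\cup A^c}]$ from the second; when $k \geq 2$ this forces $B_2 = A \smallsetminus \{x\}$ (the case $k = 1$ requires a complementation check discussed below), so $\Phi(A) \supseteq A \smallsetminus \{x\}$. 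Letting $x$ range over $A$ and using $|A| \geq 2$ yields $\Phi(A) \supseteq A$, hence $\Phi(A) = A$.

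It remains to handle $|A| = 1$, for which I first run the previous paragraph at level $k$, so that every $[\B^{k,0}_{A'}]$ with $|A'| \geq 2$ is already known to be $\Phi$-fixed. Write $A = \{x\}$ and pick any $y \neq x$, possible since $n \geq 3$. When $k < g$, use the three-vertex expansion of $\B^{k,0}_{\{x\}}$ carrying $k$ loops and the marking $\{x\}$ on one vertex, the marking $\{y\}$ on a second, and the marking $\{x,y\}^c$ on a third, with the edges arranged so that one contraction gives $[\B^{k,0}_{\{x\}}]$ and another gives $[\B^{k,0}_{\{x,y\}}]$; this expansion is $w$-stable because $g - k \geq 1$ and $\{x,y\}^c \neq \varnothing$. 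Applying Lemma \ref{cyclespreserved} and comparing the two contractions of its image under $\Phi$, one contraction gives $\Phi(\{x\}) = B_1$ with $|B_1| = 1$ while the other gives $B_1 \cup B_2 = \{x,y\}$ (using that $[\B^{k,0}_{\{x,y\}}]$ is fixed), so $\Phi(\{x\}) \in \{\{x\},\{y\}\}$; since this holds for every $y \neq x$ and $n \geq 3$, we conclude $\Phi(\{x\}) = \{x\}$. When $k = g$, stability of $\B^{g,0}_{\{x\}}$ forces $w(\{x\}^c) > 1$, and I instead re-expand the loop-vertex of $\B^{g,0}_{\{x\}}$ into a $(g-1)$-loop vertex joined by a double edge to a vertex carrying the marking $\{x\}$, with the remaining single edge running to the vertex carrying $\{x\}^c$; this contracts to $[\B^{g,0}_{\{x\}}]$ and to $[\B^{g-1,0}_{\{x\}^c}]$, the latter $\Phi$-fixed by the inductive hypothesis, and comparing the two contractions of its image under $\Phi$ pins down $\Phi(\{x\}) = \{x\}$ directly. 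This completes the induction.

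The main obstacle is the degenerate-case bookkeeping rather than any new idea. One must check $w$-stability of the auxiliary expansions in the corners $k = g$ and $A^c$ of small weight, and one must contend with the complementation ambiguity of $[\B^{0,0}_{\cdot}]$ that appears precisely when $k = 1$: there the contracted simplex $[\B^{0,0}_{\{x\}\cup A^c}]$ only determines $B_1 \cup B_3$ up to complement, and when this creates genuine ambiguity — which happens only for $n$ even and $|A| = n/2 + 1$, forcing $|A| \geq 3$ — one splits off a two-element subset of $A$ rather than $\{x\}$ in the expansion $\T_x$, after which the contracted simplex lands on a $[\B^{0,0}_{\cdot}]$ whose marking set has size $\neq n/2$ and the argument proceeds as before. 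Finally, one should confirm in each case that Lemma \ref{cyclespreserved} rigidifies the expansion enough that only its marking function can move under $\Phi$; this holds because each expansion used is asymmetric enough to have no nontrivial edge-label-preserving automorphism.
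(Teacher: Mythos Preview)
Your induction on $k$ is a different route from the paper's: for $1 \le k < g$ the paper uses a \emph{loop-free} triangle $\G_{k,A}$ on vertices marked $\{x\}$, $A\setminus\{x\}$, $A^c$ with multiedges of sizes $k+1$, $g-k$, $1$, one of whose contractions is $[\B^{0,0}_x]$ itself (fixed by hypothesis, with a singleton marking) and another is $[\B^{g-k-1,0}_{A^c\cup\{x\}}]$. Assuming $x$ lands on the wrong vertex of $\Phi[\G_{k,A}]$ forces one vertex to carry $\varnothing$, and then Theorem~\ref{OrbitsPreserved} applied to the second contraction yields $n=2$, a contradiction. The case $k = g$ is handled separately by noting that $\B^{g,0}_A$ shares a common expansion with $\B^{0,0}_x$ iff $x \in A$. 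No induction and no split on $|A|$ are needed.

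For $k \ge 2$ your inductive step is essentially sound, but there is a real gap at $k = 1$. You claim the complementation ambiguity in $[\B^{0,0}_{\{x\}\cup A^c}]$ is ``genuine only for $|A| = n/2 + 1$'' and propose splitting off two marks so that the contracted $[\B^{0,0}_\cdot]$ has marking of size $\ne n/2$. This conflates two issues: the identity $[\B^{0,0}_S] = [\B^{0,0}_{S^c}]$ holds for \emph{every} $S$, so from your two contractions you only learn $B_1\cup B_2 = \Phi(A)$ and $B_1\cup B_3 \in \{\{x\}\cup A^c,\ A\setminus\{x\}\}$, and nothing you invoke pins down $|B_1|$ for a three-vertex simplex. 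The bad alternative $B_2 = \{x\}\cup A^c$ is therefore consistent whenever $2|A| \ge n+1$, not only at $|A| = n/2+1$, and your two-element fix suffers the same defect. The argument can be salvaged (aggregate over $x \in A$: if the bad case held for every $x$ then $\Phi(A) = \{1,\ldots,n\}$, and if it fails for two distinct $x$ one is done), but this still leaves $|A^c| \le 1$ unresolved and is considerably more work than you indicate. The paper's choice to anchor on the singleton $[\B^{0,0}_x]$ rather than on $[\B^{0,0}_{\{x\}\cup A^c}]$ is exactly what makes the complementation harmless.
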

    
    \begin{proof}
    If $k = 0$, then $[\B^{k,0}_A]$ is fixed by Proposition \ref{k0A}. When $k = g$, $\B^{g, 0}_A$ shares a common expansion with $\B_{x}^{0,0}$ if and only if $x \in A$, so $[\B^{g, 0}_A]$ is fixed because all of the simplices $[\B_{x}^{0,0}]$ are fixed. So suppose that $1 \leq k < g$ and that $A$ is nonempty.
    
    For a pair $[\B^{k,0}_A]$ and $[\B^{0,0}_x]$ with $x \in A$ consider the graph $\G_{k, A}$ of Figure \ref{k0A-fig}, where the $\{x, A - \{x\}\}$ multiedge has cardinality $k+1$ and contains an edge labelled $j$, and the $\{A^c, x\}$ multiedge has cardinality $g - k$ and contains an edge labelled $h$. Give such a graph an edge labeling $\tau$ such that \[d_i[\G_{k, A}] = [\B^{0,0}_x],\] \[d_j[\G_{k, A}] = [\B^{k,0}_A],\] and \[d_h[\G_{k, A}] = [\B^{g-k-1, 0}_{A^c \cup \{x\}}];\] we put $[\G_{k, A}] := [\G_{k, A}, \tau]$. Then, by Lemma \ref{cyclespreserved}, we  have that $\Phi[\G_{k, A}]$ is weakly isomorphic to $[\G_{k, A}]$. Let $C_1, C_2, C_3 \subseteq \{1, \ldots, n \}$ be the marking sets on the vertices of $\G_{k, A}$ as in Figure \ref{k0A-fig}.
    
    \begin{figure}[h]
    \centering
    \includegraphics[scale=1.2]{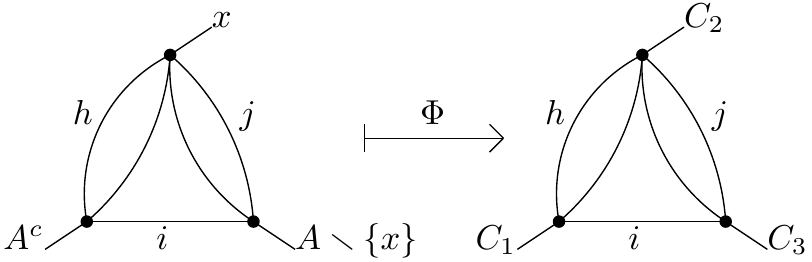}
    \caption{The simplex $[\G_{k, A}]$ and its image under $\Phi$}
    \label{k0A-fig}
    \end{figure}
    
     If we can show that $x \not\in C_1$, we're done: we must have $C_2 \cup C_3 = \Phi(A)$, where $\Phi(A) \subseteq \{1, \ldots, n \}$ is uniquely determined by Theorem \ref{OrbitsPreserved}. So if $x \in C_2 \cup C_3$ we have $x \in \Phi(A)$ which gives that $A \subseteq \Phi(A)$, and the result follows as $|A| = |\Phi(A)|$.
    
    As such, assume for contradiction that $x \in C_1$. Then $d_i[\G_{k, A}] = [\B^{0,0}_x]$, so $C_3 = \emptyset$, and $C_2 =\{x\}^c$. Since $n - 1 = |C_2 \cup C_3| = |\Phi(A)| = |A|$, this gives $|A^c| = |C_1| = 1$. Thus \[d_h\Phi[\G_{k, A}] = [\B^{g - k - 1, 0}_{\{1, \ldots, n\} }]. \] However, we have \[d_h[\G_{k, A}] = [\B^{g - k - 1, 0}_{A^c \cup \{x\}}],\] so we must have $n = |A^c| + 1 = 2$, a contradiction to our hypothesis that $n \geq 3$. \qedhere
    
    \end{proof}
    
    By symmetry, Proposition \ref{k0A} gives us that any $[\B^{0, \ell}_A]$ is also fixed.
    
    \begin{prop}\label{klA}
    Fix $g \geq 1, n \geq 3$, and suppose $\Phi \in \Aut(\Delta_{g, w})$ fixes all simplices of the form $[\B^{k, 0}_A]$ with $k < g$. Then, $\Phi$ fixes all simplices of the form $[\B^{k, \ell}_A]$.
    \end{prop}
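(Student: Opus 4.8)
The plan is to reduce, for each remaining simplex, to Proposition \ref{k0A} by means of a suitable auxiliary expansion, exactly in the spirit of the proofs of Propositions \ref{00A} and \ref{k0A}. First dispose of the trivial cases: if $A=\emptyset$ or $A=\{1,\dots,n\}$ then $\Phi(A)=A$ by Theorem \ref{OrbitsPreserved}, and if $k\ell=0$ the simplex is fixed by Proposition \ref{k0A}; using $[\B^{k,\ell}_A]=[\B^{\ell,k}_{A^c}]$ we may assume $k\ge\ell\ge 1$ and $\emptyset\neq A\neq\{1,\dots,n\}$. Now induct on $\ell$. For the inductive step, form $\G$ by subdividing one connecting edge of $B^{k,\ell}_A$ by a new vertex $v_0$ and moving one of the $\ell$ loops at the $A^c$-vertex onto $v_0$: thus $\G$ has a vertex $v_1$ carrying $A$ and $k$ loops, a vertex $v_0$ carrying no markings and a single loop, and a vertex $v_2$ carrying $A^c$ and $\ell-1$ loops, joined by single edges $v_1v_0$, $v_0v_2$ and by $g-k-\ell$ edges $v_1v_2$. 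One checks $b^1(\G)=g$ and $|E(\G)|=g+2$, that contracting $v_1v_0$ gives $[\B^{k+1,\ell-1}_A]$ while contracting $v_0v_2$ gives $[\B^{k,\ell}_A]$, and that the only stability inequality for $\G$ which is not automatic is the one at $v_2$, which coincides with the stability of $B^{k+1,\ell-1}_A$. When $\ell\ge 2$ the simplex $[\B^{k+1,\ell-1}_A]$ is fixed by induction, and when $\ell=1$ it equals $[\B^{k+1,0}_A]$ and is fixed by Proposition \ref{k0A}; in both cases it is a genuine simplex, and hence $\G$ is $w$-stable --- except in the single configuration $(k,\ell)=(g-1,1)$ with $w(A^c)\le 1$, which must be treated separately.

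Granting stability of $\G$, Lemma \ref{cyclespreserved} gives that $\Phi[\G]$ is weakly isomorphic to $[\G]$, so it has the same underlying edge-labelled graph and merely redistributes the markings among $v_1,v_0,v_2$ as, say, $Y_1,Y_0,Y_2$. Contracting the edge of $\Phi[\G]$ corresponding to $v_1v_0$ produces $[\B^{k+1,\ell-1}_{Y_1\cup Y_0}]$, which must equal the fixed simplex $[\B^{k+1,\ell-1}_A]$; since $k+1>\ell-1$ the two vertices of $B^{k+1,\ell-1}$ are distinguishable, forcing $Y_1\cup Y_0=A$ and hence $Y_2=A^c$. Contracting the edge corresponding to $v_0v_2$ then produces $[\B^{k,\ell}_{A\setminus Y_0}]$, which equals $\Phi[\B^{k,\ell}_A]=[\B^{k,\ell}_{\Phi(A)}]$ with $|\Phi(A)|=|A|$ by Theorem \ref{OrbitsPreserved}. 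When $k\neq\ell$ we conclude $A\setminus Y_0=\Phi(A)$, and comparing cardinalities gives $Y_0=\emptyset$ and $\Phi[\B^{k,\ell}_A]=[\B^{k,\ell}_A]$. When $k=\ell$ the graph $B^{k,k}$ has an extra symmetry, so this only yields $A\setminus Y_0\in\{\Phi(A),\Phi(A)^c\}$; to kill the second possibility one should arrange that $v_1$ carries whichever of $A,A^c$ is the smaller (which is legitimate, as $[\B^{k,k}_A]=[\B^{k,k}_{A^c}]$), since then a cardinality count again forces $Y_0=\emptyset$, while in the remaining equality case $|A|=|A^c|$ one gets $\Phi(A)\in\{A,A^c\}$, which suffices.

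For the omitted configuration $(k,\ell)=(g-1,1)$ with $w(A^c)\le 1$, one replaces $\G$ by the graph on three vertices $u_1,u_2,u_3$ in which $u_1$ carries $A$ and no loops, $u_2$ carries no markings and $g-2$ loops, $u_3$ carries $A^c$ and one loop, with a double edge $u_1u_2$ and a single edge $u_2u_3$; this is always $w$-stable for $A\neq\emptyset$, contracting a component of the double edge gives $[\B^{g-1,1}_A]$, and contracting $u_2u_3$ gives $[\B^{g-2,0}_{A^c}]$, which is fixed by Proposition \ref{k0A}, and running the same comparison fixes $[\B^{g-1,1}_A]$. The main technical point throughout is the step ``$\Phi[\G]$ is weakly isomorphic to $[\G]$'' together with the identification of which contraction is which: for a few low-genus configurations (for instance $g\in\{2,3\}$ with $(k,\ell)=(1,1)$) the underlying uncoloured graph of the auxiliary graph acquires an automorphism, so the weak isomorphism is only determined up to that automorphism and one must contract all of the relevant edges --- but each such contraction lands either in a simplex of the form $[\B^{m,0}_C]$ (fixed by Proposition \ref{k0A}) or in a simplex of the form $[\B^{k',\ell'}_{\{1,\dots,n\}}]$ (fixed by Theorem \ref{OrbitsPreserved}), so together they still pin down the markings of $\Phi[\G]$. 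Once Proposition \ref{klA} is established, combining it with Propositions \ref{00A} and \ref{k0A} proves Theorem \ref{FinalReduction}, and hence Theorem \ref{Auts} by Lemma \ref{PermutationisStable}.
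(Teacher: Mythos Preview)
Your inductive scheme --- reduce $[\B^{k,\ell}_A]$ to $[\B^{k+1,\ell-1}_A]$ by inserting a new middle vertex $v_0$ carrying one of the $\ell$ loops --- is a genuinely different route from the paper's. The paper instead pulls off a single \emph{marking} $x\in A$ onto its own vertex, building a three-vertex graph $\G_{k,\ell,A}$ whose relevant contractions are $[\B^{0,\ell}_x]$, $[\B^{k,\ell}_A]$, and $[\B^{g-k-1,0}_{A^c\cup\{x\}}]$; the first and third are already fixed by Proposition~\ref{k0A}, so no induction on $\ell$ is needed. The crucial design feature of the paper's auxiliary graph is that \emph{all} $\ell$ loops sit on a single vertex: Lemma~\ref{cyclespreserved}(b) then keeps them together in $\Phi\G$, and the single observation that $d_j\Phi[\G]$ must have shape $\B^{k,\ell}$ (by Theorem~\ref{OrbitsPreserved}) identifies which of the three vertices carries them.

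Your graph $\G$, by contrast, has loops on all three vertices ($k$ on $v_1$, one on $v_0$, $\ell-1$ on $v_2$), and the sentence ``Lemma~\ref{cyclespreserved} gives that $\Phi[\G]$ is weakly isomorphic to $[\G]$'' is the real gap. Lemma~\ref{cyclespreserved} determines the non-loop skeleton and tells you that the three loop-groups each stay together and on distinct vertices, but it does \emph{not} tell you which group lands on which vertex of $\Phi\G$. Concretely, for $\ell=1$ the assignments ``$k$ loops on $u_1$, one loop on $u_0$'' and ``$k$ loops on $u_0$, one loop on $u_1$'' are both compatible with Lemma~\ref{cyclespreserved} \emph{and} with your two contractions $d_a,d_b$; one must additionally contract a $v_1v_2$ edge (landing in $[\B^{g-2,1}_{\{1,\dots,n\}}]$) to distinguish them, and this is not a ``low-genus'' phenomenon but occurs for every $g$. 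You gesture at this in your final paragraph, but the repair is not carried out, and framing it as ``the underlying uncoloured graph acquires an automorphism'' mislocates the issue. Your approach can be completed with a systematic case analysis of loop placements (using $d_a$, $d_b$, and $d_{c_i}$ together), but that analysis is missing.

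Two smaller points: in your exceptional construction, contracting $u_2u_3$ gives $[\B^{g-1,0}_{A^c}]$, not $[\B^{g-2,0}_{A^c}]$; and for $g=2$ (so $k=\ell=1$) with both $w(A),w(A^c)\le 1$, your exceptional argument as written leaves the ambiguity $\Phi(A)\in\{A,Y_3\}$ for an unspecified $Y_3\subseteq A^c$, which takes a further appeal to Theorem~\ref{OrbitsPreserved} (ruling out $A\cup Y_2=\{1,\dots,n\}$) to resolve.
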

     
    \begin{proof}
    If either of $k, \ell = 0$ then $[\B^{k,\ell}_A]$ is fixed by Proposition \ref{k0A}. Similarly, by Theorem \ref{OrbitsPreserved}, we may always assume $A, A^c$ nonempty. So, assume first that $k, \ell \geq 1$, $k + \ell < g$. For every pair $[\B^{k,\ell}_A]$ and $[\B^{0,\ell}_x]$ with $x \in A$ consider the graph $\G_{k,\ell,A}$ of Figure 6, where the $\{x, A - \{x\}\}$ multiedge has cardinality $k+1$, and the $\{A^c, x\}$ multiedge has cardinality $g - k - \ell$. Give such a graph an edge labeling $\tau$ such that $d_i[\G_{k,\ell,A}] = [\B^{0,\ell}_x]$, $d_j[\G_{k,\ell,A}] = [\B^{k,\ell}_A]$, and $d_h[\G_{k,\ell,A}]$ =  $[\B^{g-k-1,0}_{A^c\cup\{x\}}]$; we put $[\G_{k,\ell,A}] := [\G_{k,\ell,A}, \tau]$.
    \begin{figure}[h]
        \centering
        \includegraphics[scale=1.2]{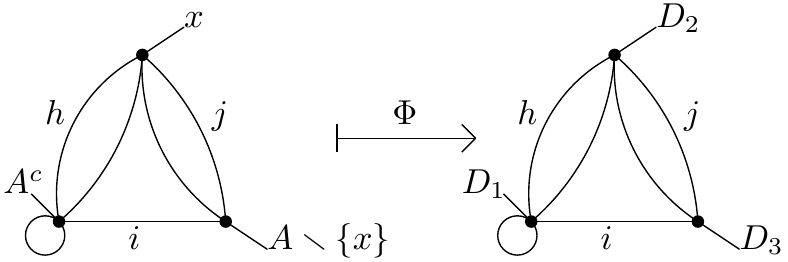}
        \caption{The simplex $[\G_{k, \ell, A}]$ and its image under $\Phi$}
        \label{klA-fig}
    \end{figure}
    By hypothesis, $\ell$ and $k$ are nonzero, so neither of the vertices adjacent to edge $j$ in $\Phi(\G_{k,\ell,A}]$ can support $\ell$ loops: if either did, then $d_j\Phi[\G_{k,\ell,A}] = [\B^{k+\ell,0}_{\Phi(A)}]$ for some choice of edge labeling, a contradiction to Lemma 3.2. This means that the $\ell$ loops in $\Phi[\G_{k,\ell,A}]$ are adjacent to edges $h$ and $i$. Thus, by Lemma 3.3 applied to all remaining edges of $\Phi[\G_{k,\ell,A}]$, we have that $\Phi[\G_{k,\ell,A}]$ is weakly isomorphic to $[\G_{k,\ell,A}]$, so the situation is as depicted in Figure \ref{klA-fig}; let $D_1, D_2, D_3 \subseteq \{1, \ldots, n \}$ be the marking sets on the vertices as in the figure.

    By Proposition 3.9, \[d_i\Phi[\G_{k,\ell,A}] = \Phi[\B^{0,\ell}_x] = [\B^{0,\ell}_x].\] Further, by the same, \[d_h\Phi[\G_{k,\ell,A}] = \Phi[\B^{g-k-1,0}_{A^c \cup \{x\}}] = [\B^{g-k-1,0}_{A^c \cup \{x\}}].\] Since $\ell \neq 0$, this implies that $D_2 = \{x\}$ and $D_1 \cup D_2 = A^c \cup \{x\}$, so $D_1 = A^c$, and thus $D_3 = A \smallsetminus \{x\}$. This implies that $\Phi[\G_{k,\ell,A}] = [\G_{k,\ell,A}]$, and so each contraction of $[\G_{k,\ell,A}])$ is fixed under $\Phi$ as well.
    
    Now say $k + \ell = g$, with $k, \ell > 0$. Recall that by Theorem 3.4, we may always assume that $|A|, |A^c| \geq 1$. Then, for every pair $[\B^{k,\ell}_A]$ and $[\B^{0,\ell}_x]$ with $x \in A$, consider the graph $\G'_{k,\ell,A}$ of Figure \ref{klA-fig2}, where the $\{x, A\smallsetminus\{x\}\}$ multiedge has cardinality $k+1$. 
    
    Give such a graph an edge-labelling $\tau$ such that $d_i([\G'_{k,\ell,A}]) = [\B^{0,\ell}_x]$ and $d_j([\G'_{k,\ell,A}]) = [\B^{k,\ell}_A]$; we put $[\G'_{k,\ell,A}] := [\G'_{k,\ell,A}, \tau]$. By Lemma \ref{cyclespreserved}, $\Phi([\G'_{k,\ell,A}])$ is weakly isomorphic to $[\G'_{k,\ell,A}]$: this is immediate ignoring the loops, while the loops cannot be adjacent to edge $j$ lest $d_j(\Phi([\G'_{k,\ell,A}])) = [\B^{g,0}_A]$ for some edge labeling of $\B^{g,0}_A$. Let $E_1, E_2, E_3 \subseteq \{1, \ldots, n\}$ be the markings on the vertices of $\Phi[\G_{k, \ell, A}']$ as in Figure \ref{klA-fig2}.
    
     \begin{figure}[h]
        \centering
        \includegraphics[scale=1.2]{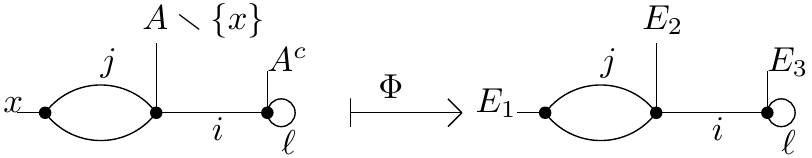}
        \caption{The simplex $[\G_{k, \ell, A}']$ and its image under $\Phi$}
        \label{klA-fig2}
    \end{figure}
    
    Then, one can see that $E_1 = \{x\}$ because 
    \[\Phi d_i[\G_{k, \ell, A}'] = \Phi [\B^{0, \ell}_x] = [\B^{0, \ell}_x] \]
    by Proposition \ref{k0A}. As such, since $E_1 \cup E_2 = \Phi(A)$, we have that $x \in A$ implies $x \in \Phi(A)$, and the result follows. \qedhere

    \end{proof}
    
    The preceding three propositions combine to prove Theorem \ref{FinalReduction}.
   \begin{proof}[Proof of Theorem \ref{FinalReduction}]
   Suppose given $\Phi \in \Aut(\Delta_{g, w})$ such that $\Phi$ fixes all the simplices $[\B_{x}^{0,0}]$. Proposition \ref{00A} shows then that $\Phi$ fixes all $[\B_{A}^{0,0}]$, then Proposition \ref{k0A} shows that $\Phi$ fixes the $[\B_{A}^{k,0}]$, and then Proposition \ref{klA} shows that $\Phi$ fixes all the $[\B_A^{k, \ell}]$.
   \end{proof}
   
   Finally, we conclude this section by indicating how Theorem \ref{Auts} follows from its results.
   
   \begin{proof}[Proof of Theorem \ref{Auts}]
   By Theorem \ref{RestrictionInjects}, it suffices to show that for any $\Phi \in \Aut(\Delta_{g, w})$, there exists a unique element $\sigma \in \Aut(K_w)$ such that $\Phi = \sigma$ when restricted to $\V^{2}_{g, w}$. Given such $\Phi$, there exists a unique permutation $\sigma \in S_n$ such that $\Phi = \sigma$ on the $n$ simplices $[\B_x^{0,0}]$, by Theorem \ref{OrbitsPreserved}. Lemma \ref{PermutationisStable} then implies that $\sigma \in \Aut(K_w)$. Then Theorem \ref{FinalReduction} gives that $\Phi \circ \sigma^{-1}$ acts as the identity on the facets of $\V^{2}_{g, w}$, from which it follows that $\Phi \circ \sigma^{-1}$ fixes the whole subcomplex $\V^{2}_{g, w}$. Thus $\Phi = \sigma$ on $\V^{2}_{g, w}$ and the proof is complete. 
   \end{proof}

\section{The genus 0 case}\label{genuszero}
When $g = 0$, Theorem \ref{Auts} fails for general $w$. We will first give some counterexamples, and then proceed to show that the theorem still holds when $g = 0$ and $w$ is assumed to be heavy/light.
\subsection{Counterexamples to Theorem \ref{Auts} when $g = 0$}
We now give an infinite family of counterexamples to Theorem \ref{Auts} in the case $g = 0$.
\begin{prop}\label{0dimcounterexamples}
For each integer $k \geq 1$, set $w^k = (1/k)^{(2k + 2)}$. Then \[\Aut(\Delta_{g, w^k}) \cong S_{N(k)},\] 
where \[ N(k) : = \frac{1}{2} \cdot \binom{2k + 2}{k + 1}. \]
Moreover, we have $N(k) > 2k + 2$ for $k \geq 2$ and $N(1) = 3 < 4$, so $\Aut(\Delta_{g, w^k}) \not \cong \Aut(K_{w^k})$ for all $k \geq 1$.
\end{prop}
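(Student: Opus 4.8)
The plan is to compute $\Aut(\Delta_{0, w^k})$ and $\Aut(K_{w^k})$ directly and compare the degrees of the resulting symmetric groups. First I would note that $w^k(S) = |S|/k \le 1$ precisely when $|S| \le k$, so $K_{w^k}$ is the $(k-1)$-skeleton of the full simplex on $\{1, \dots, 2k+2\}$; every permutation of the vertex set preserves this complex, so $\Aut(K_{w^k}) \cong S_{2k+2}$.

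The key structural step is to show that $\Delta_{0, w^k}$ is $0$-dimensional. A genus-$0$, $w^k$-stable graph $\G$ is a tree, say with $e$ edges and $e+1$ vertices. If $v$ is a leaf then $\val(v) = 1$, so $w^k$-stability forces $w^k(m^{-1}(v)) > 1$, i.e. $v$ carries at least $k+1$ of the $2k+2$ markings; a valence-$2$ vertex must carry at least one marking. A short case analysis on the shape of the tree when $e \ge 2$ (it is either a path on $\ge 3$ vertices, with two leaves and an interior valence-$2$ vertex, or it has a branch vertex, forcing at least three leaves) shows that at least $2k+3 > 2k+2$ markings would be required. Hence every $w^k$-stable genus-$0$ graph has exactly one edge, i.e. $\Delta_{0, w^k}[p] = \varnothing$ for all $p \ge 1$.

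Next I would enumerate $\Delta_{0, w^k}[0]$. A one-edge genus-$0$ graph has two valence-$1$ vertices, each forced by the previous step to carry exactly $k+1$ markings; it admits a unique edge-labelling, and two such graphs are isomorphic in $\Gamma_{0, w^k}^{\mathrm{EL}}$ exactly when they induce the same unordered partition of $\{1, \dots, 2k+2\}$ into two blocks of size $k+1$. Thus $|\Delta_{0, w^k}[0]| = \tfrac{1}{2}\binom{2k+2}{k+1} = N(k)$. Since a symmetric $\Delta$-complex $X$ with $X[p] = \varnothing$ for all $p \ge 1$ is just the set $X[0]$, and any natural self-transformation of it is an unrestricted self-map of $X[0]$, we get $\Aut(\Delta_{0, w^k}) \cong \mathrm{Perm}(\Delta_{0, w^k}[0]) \cong S_{N(k)}$.

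Finally I would check the arithmetic. Directly, $N(1) = \tfrac{1}{2}\binom{4}{2} = 3 < 4$. For $k \ge 2$, putting $m = k+1 \ge 3$ it suffices to see $\binom{2m}{m} > 4m$: this holds at $m = 3$ since $\binom{6}{3} = 20 > 12$, and $\binom{2(m+1)}{m+1} = \tfrac{2(2m+1)}{m+1}\binom{2m}{m} \ge 2\binom{2m}{m}$ propagates the inequality. So $N(k) \ne 2k+2$ for every $k \ge 1$, and finite symmetric groups of distinct degrees $\ge 2$ are non-isomorphic, giving $\Aut(\Delta_{0, w^k}) \cong S_{N(k)} \not\cong S_{2k+2} \cong \Aut(K_{w^k})$. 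I expect no serious obstacle here; the only steps needing genuine care are the tree-stability count that collapses $\Delta_{0, w^k}$ to dimension $0$, and the remark that automorphisms of a purely $0$-dimensional symmetric $\Delta$-complex are completely unconstrained.
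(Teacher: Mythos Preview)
Your proposal is correct and follows essentially the same approach as the paper: identify $\Delta_{0,w^k}$ as $N(k)$ isolated vertices, compute $\Aut(K_{w^k}) \cong S_{2k+2}$, and verify the inequality $N(k) > 2k+2$ for $k \ge 2$ by induction on the central binomial coefficient. You supply a bit more detail than the paper on why no $w^k$-stable tree can have two or more edges, and your inductive step uses the cleaner bound $\binom{2m+2}{m+1} \ge 2\binom{2m}{m}$ in place of the paper's estimate $\frac{(2k+2)(2k+1)}{(k+1)^2} > 1 + \tfrac{1}{k}$, but these are cosmetic differences.
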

\begin{proof}
 The space $\Delta_{0, w^k}$ consists of $N(k)$ disjoint vertices; this is because the only $w^k$-stable trees consist of only one edge, where each vertex supports $k + 1$ markings. This proves that $\Aut(\Delta_{0, w^k}) \cong S_{N(k)}$. Clearly $\Aut(K_{w^k}) \cong S_{2k + 2}$, so it only remains to prove that $N(k) > 2k + 2$ for all $k \geq 2$. We do this by induction. When $k = 2$, we have $N(2) = 10$ and $2k + 2 = 6$. Now suppose it is known that
 \[ \binom{2k}{k} > 4k. \]
 We then have
 \[ \binom{2k + 2}{k + 1} = \frac{(2k + 2)(2k + 1)}{(k+1)^2} \cdot \binom{2k}{k} > \frac{(2k + 2)(2k + 1)}{(k+1)^2} \cdot 4k. \]
 Observe that
 \[\frac{(2k + 2)(2k + 1)}{(k+1)^2} = \frac{4k^2 + 6k + 1}{k^2 +2k + 1} > 2 > 1 + \frac{1}{k}, \]
 for $k \geq 2$. Hence
 \begin{equation*}
 \binom{2k + 2}{k + 1} > \left(1 + \frac{1}{k} \right)4k = 4k + 4,
 \end{equation*}
 as desired.
 \end{proof}
 \begin{exmp}
 The family of examples provided by Proposition \ref{0dimcounterexamples} are $0$-dimensional. A $1$-dimensional example where $\Aut(\Delta_{0, w}) \not \cong \Aut(K_w)$ is given by $w = (1/3^{(3)}, 7/12^{(3)})$. In this case we have $\Aut(K_w) \cong S_3 \times S_3$, but $\Aut(\Delta_{0, w})$ is isomorphic to the wreath product $S_3 \wr S_3$. See Figures \ref{dim1counterexample} and \ref{weightcomplexS3S3}. This also gives an example where $\Aut(\Delta_{0, w})$ is not isomorphic to a direct product of symmetric groups, which cannot happen when $g \geq 1$, by Theorem \ref{Auts} and Theorem \ref{AllProductsArise}.
 \end{exmp}
 \begin{figure}[h]
     \centering
     \includegraphics[scale=1.2]{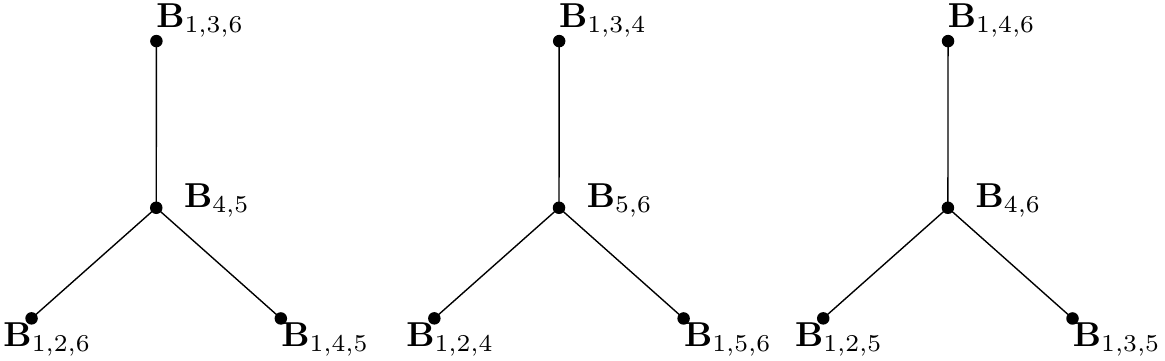}
     \caption{The tropical moduli space $\Delta_{0, w}$ for $w = (1/3^{(3)}, 7/12^{(3)})$.}
     \label{dim1counterexample}
 \end{figure}
 
  \begin{figure}[h]
     \centering
     \includegraphics[scale=1.2]{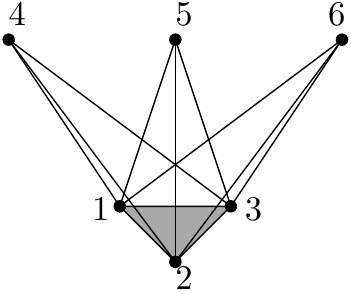}
     \caption{The simplicial complex $K_w$ for $w = (1/3^{(3)}, 7/12^{(3)})$.}
     \label{weightcomplexS3S3}
 \end{figure}
\subsection{Calculation of $\Aut(\Delta_{0, w})$ for heavy/light $w$}
In this section, we will remedy the genus $0$ failure of Theorem \ref{Auts} when $w$ is \textit{heavy/light}, i.e. when $w = (\varepsilon^{(m)}, 1^{(n)})$ for $\varepsilon \leq 1/m$.
\begin{thm}\label{AutsZero}
Suppose $m + n \geq 5$ and $n,m \geq 2$, and set $w = (\varepsilon^{(m)}, 1^{(n)})$ for any $\varepsilon \leq 1/m$. Then
\[\Aut(\Delta_{0, w}) \cong \Aut(K_w) \cong S_m \times S_n. \]
\end{thm}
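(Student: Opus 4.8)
The plan is to mimic the strategy used for the positive-genus Theorem~\ref{Auts}: reduce to understanding the action of an automorphism on a controlled family of low-dimensional simplices, show that it must act by a permutation lying in $\Aut(K_w)$, and then bootstrap to all of $\Delta_{0,w}$. The computation $\Aut(K_w) \cong S_m \times S_n$ for heavy/light $w$ is elementary: since $\varepsilon \leq 1/m$, every subset of the light markings together with at most one heavy marking has $w$-weight $\leq 1$, while no two heavy markings can lie in a common face; so the light markings are interchangeable among themselves, the heavy ones among themselves, and no automorphism of $K_w$ can swap a light marking with a heavy one (because light markings lie in faces of dimension $\geq 1$ with other light markings, whereas a heavy marking is only ever in a face with light markings — one checks the degrees/links distinguish them since $m, n \geq 2$). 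So the content is the isomorphism $\Aut(\Delta_{0,w}) \cong \Aut(K_w)$.

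First I would set up the genus-$0$ analogues of the combinatorial scaffolding from Section~\ref{ProofofAuts}. In genus $0$ the facets of $\V^2_{0,w}$ are marked, edge-labelled versions of the tree with two vertices joined by a single edge: for a partition $\{1,\dots,m+n\} = A \sqcup A^c$ with both sides carrying enough weight to be stable, write $\B_A$ for this two-vertex tree. By Theorem~\ref{RestrictionInjects} it suffices to determine the action of $\Phi \in \Aut(\Delta_{0,w})$ on $\V^2_{0,w}$, and by Theorem~\ref{VFiltrationPreserved} and the genus-$0$ version of Lemma~\ref{WeakPreserved} (weak isomorphism class preserved), $\Phi$ sends each $[\B_A]$ to some $[\B_{\Phi(A)}]$ with $|A| = |\Phi(A)|$ or $|A| = |\Phi(A)^c|$. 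The first real step is to pin down the singletons: for a single marking $x$, $[\B_{\{x\}}]$ is a $\V^2_{0,w}$-facet exactly when the one-vertex side $\{x\}^c$ is stable, which — since $n \geq 2$ — always holds, and the singleton on the other side is stable iff $2h - 2 + \mathrm{val} + w_x > 0$ with $h=0$, $\mathrm{val}=1$, i.e. never; so $\B_{\{x\}}$ has $\{x\}$ on a valence-$1$, weight-$w_x$ vertex, which forces stability $w_x > 1$ — impossible. Hence I must instead work with the smallest stable $A$'s. Concretely: a subset $A$ gives a stable leaf-vertex iff $|A| \geq 2$ or $A$ contains a heavy marking; this is where heavy/light enters decisively. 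One shows $\Phi$ preserves the set of singletons-union-a-light-pair type configurations and, arguing as in Proposition~\ref{00A} with explicit three-vertex expansions $\T$ (caterpillar trees) whose contractions realize prescribed $[\B_A]$'s, that there is a permutation $\sigma \in S_{m+n}$ with $\Phi[\B_A] = [\B_{\sigma(A)}]$ for all relevant $A$; then the genus-$0$ analogue of Lemma~\ref{PermutationisStable} (using that $\B_{A}$ is stable iff $w(A)$ and $w(A^c)$ are both "large enough", a condition invariant under $\sigma$) gives $\sigma \in \Aut(K_w) = S_m \times S_n$. Finally, replacing $\Phi$ by $\sigma^{-1}\Phi$, I reduce to showing an automorphism fixing every $[\B_A]$ is the identity on $\V^2_{0,w}$, which follows from the genus-$0$ analogue of the chain of arguments in Propositions~\ref{00A}--\ref{klA} (there are no loops now, so only the "$k = \ell = 0$" case survives and the argument is shorter).

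The main obstacle I anticipate is the base case — identifying which two-vertex trees are $w$-stable and getting a rigid enough starting configuration for $\Phi$ to act on. Unlike the positive-genus situation, where $\B^{0,0}_x$ is always stable for every single marking $x$ and gives $n$ canonical fixed simplices to anchor the induction, here the stability constraint $w(A), w(A^c) > 0$-with-valence-$1$ means singletons are almost never stable, and the asymmetry between heavy and light markings must be exploited to distinguish them combinatorially inside $\Delta_{0,w}$ itself (not just inside $K_w$). I expect to handle this by using, as anchors, the simplices $[\B_A]$ where $A$ is a single heavy marking together with all but one of the light markings — or dually a pair of light markings — and showing via expansion arguments that $\Phi$ must permute heavy anchors among themselves and light configurations among themselves; the constraint $m + n \geq 5$ and $n, m \geq 2$ is exactly what is needed to have enough room for these expansions and to rule out sporadic coincidences (analogous to the $n = 4$ subtlety in Proposition~\ref{00A}). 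Once the anchors are rigidified the rest is a routine transcription of the genus-$0$ case of~\cite{K} together with the arguments already given in Section~\ref{ProofofAuts}.
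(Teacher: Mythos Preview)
Your proposal has a genuine gap at the step you flag as ``the first real step.'' In genus $g\geq 1$, Theorem~\ref{OrbitsPreserved} establishes that $\Phi$ preserves the $S_n$-orbit of $[\B^{0,0}_A]$ (in particular that $|A|=|\Phi(A)|$ up to complement) by counting three-vertex graphs containing a $3$-cycle, via the invariant $\mu(A)=2^{|A|}+2^{n-|A|}+\text{const}$. In genus $0$ there are no cycles, multiedges, or loops, so every three-vertex object is a path and this counting invariant is unavailable. Your appeal to Lemma~\ref{WeakPreserved} is vacuous here: all two-vertex trees with one edge are weakly isomorphic, so the lemma yields no constraint on $|A|$ versus $|\Phi(A)|$. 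You never propose a substitute invariant that would let you prove that $\Phi$ preserves any distinguished class of anchor vertices; you only say ``one shows $\Phi$ preserves\ldots'' and ``I expect to handle this,'' which is precisely the missing idea. (Note also that your suggested anchor ``a pair of light markings'' is not even a vertex of $\Delta_{0,w}$, since $w(\{i,i'\})=2\varepsilon\leq 1$ fails the stability condition $w(A)>1$.)

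The paper supplies exactly this missing ingredient, and the route is quite different from a transcription of Section~\ref{ProofofAuts}. First it observes (via Buneman's Splits-Equivalence Theorem) that $\Delta_{0,w}$ is a flag complex, so $\Aut(\Delta_{0,w})\cong\Aut(\Delta_{0,w}^{(1)})$ reduces to the $1$-skeleton. Then, in Lemma~\ref{maximize}, it proves by an explicit (and somewhat delicate) optimization that the vertices of maximal valence in $\Delta_{0,w}^{(1)}$ are precisely the ``special'' ones $\B_{i,j}$ with $i$ light and $j$ heavy. Since any graph automorphism preserves valence, $\Phi$ permutes the special vertices, and one then checks that an arbitrary vertex $\B_A$ is uniquely determined by which special vertices it is adjacent to. The hypotheses $m,n\geq 2$ and $m+n\geq 5$ enter in the valence calculation, not (as you suggest) in providing room for caterpillar-expansion arguments. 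The paper explicitly remarks that establishing an analogue of Lemma~\ref{maximize} for general $w$ is the principal obstruction to computing $\Aut(\Delta_{0,w})$ in general---this is the genus-$0$ replacement for the cycle-counting invariants, and it is the idea your outline is missing.
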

To prove Theorem \ref{AutsZero}, we will describe $\Delta_{0, w}$ as a \textit{flag complex}, i.e. the maximal simplicial complex on its $1$-skeleton. This allows us to calculate $\Aut(\Delta_{0, w})$ by instead calculating the automorphism group of its $1$-skeleton.

\begin{rem}
In ~\cite{CHMR2014moduli}, the tropical moduli space $M_{0, w}^{\trop}$ is realized, for heavy/light $w = (\epsilon^{(m)}, 1^{(n)})$, as the Bergman fan $B(G_w)$ of the graphic matroid of the \textit{reduced weight graph} $G_w$ associated to $w$. The vertex set of $G_w$ is $\{1, 2, \ldots, m + n - 1\}$, and edge $(i, j)$ is included whenever $w_i + w_j > 1$. The space $\Delta_{0, w}$ can be constructed as the link of $M_{0, w}^\trop$ at its cone point, so in particular we have $\Aut(\Delta_{0, w}) \cong \Aut(B(G_w))$. The fan $B(G_w)$ carries actions of the groups $\Aut(G_w)$ and $\Aut(I(G_w))$, where $I(G_w)$ denotes the independence complex of the graph $G_w$. In general, we have $\Aut(G_w) \cong S_{m} \times S_{n - 1}$, while a general description of $\Aut(I(G_w))$ eludes the authors. In the case $n = 2$, the graph $G_w$ is a star with $m$ leaves, and $I(G_w)$ is a standard $(m-1)$-simplex, so we have that $\Aut(G_w) \cong \Aut(I(G_w)) \cong S_m$. By Theorem \ref{AutsZero}, in this case both groups are strictly smaller than the automorphism group $\Aut(B(G_w)) \cong S_m \times S_2$.
\end{rem}

\subsection{$\Delta_{0, w}$ as a flag complex}
When $g = 0$ and $w \in \Q^n \cap (0, 1]^n$ with $\sum w_i > 2$, the objects in $\Gamma_{0, w}$ are automorphism-free, and hence $\Delta_{0, w}$ may be realized as a simplicial complex. Given some $A \subseteq \{1,\ldots, n\}$ with $w(A), w(A^c) > 1$, we put $\mathbf{B}_A$ for a $\Gamma_{0, w}$-object with one edge, such that one vertex supports the elements of $A$, and the other supports the elements of $A^c$. A collection of vertices $\{\mathbf{B}_{A_1}, \ldots, \mathbf{B}_{A_k}\}$ spans a $(k - 1)$-simplex of $\Delta_{0, w}$ if and only if there exists a $\Gamma_{0, w}$-object $\G$ with precisely $k$ edges $e_1, \ldots, e_k$, such that
 \[ \G/\{e_i \}^c \cong \mathbf{B}_{A_i} \]
 for all $i$; here $\G/\{e\}^c$ indicates the graph obtained from $\G$ by contracting all edges except for $e$. 
 
 We claim that $\Delta_{0, w}$ is a flag complex. This claim when $w = (1^{(n)})$ is due to N. Giansiracusa \cite{Giansiracusa}, and its proof is based on the Buneman Splits-Equivalence Theorem ~\cite[Theorem 3.1.4]{Phylogenetics}, which we state here in a form compatible with our notation:
 \begin{thm}\label{Phylo}
    A collection $\{\mathbf{B}_{A_1}, \ldots, \mathbf{B}_{A_k}\}$ of vertices of $\Delta_{0, n}$ spans a simplex if and only if each pair $\{\mathbf{B}_{A_i}, \mathbf{B}_{A_j} \}$ forms a $1$-simplex of $\Delta_{0, n}$.
 \end{thm}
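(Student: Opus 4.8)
The plan is to recognize Theorem~\ref{Phylo} as a reformulation of the classical Buneman Splits-Equivalence Theorem and to prove it directly in the language of stable genus-$0$ trees (so one could alternatively just cite \cite[Theorem~3.1.4]{Phylogenetics}). The dictionary is as follows: a vertex $\mathbf{B}_A$ of $\Delta_{0, n}$ records the unordered partition, or \emph{split}, $\{A, A^c\}$ of $\{1, \ldots, n\}$ into two blocks of size at least $2$, and a stable $n$-marked genus-$0$ tree $\G$ with edges $e_1, \ldots, e_k$ encodes exactly a system of $k$ distinct such splits $\{A_i, A_i^c\}$ — the split read off by deleting $e_i$ — that is \emph{pairwise compatible}, meaning that for any two of them at least one of $A_i \cap A_j$, $A_i \cap A_j^c$, $A_i^c \cap A_j$, $A_i^c \cap A_j^c$ is empty. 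The first step is the case $k = 2$: I would check by hand that $\{\mathbf{B}_A, \mathbf{B}_B\}$ spans a $1$-simplex if and only if $\{A, A^c\} \neq \{B, B^c\}$ and the two splits are compatible, by forming the two-edge path whose three vertices carry the three nonempty cells of the common refinement of the splits and verifying stability of all three (this is where the block-size hypothesis and the distinctness of the two vertices are used). The forward direction of the theorem for general $k$ is then immediate, since the restriction of a simplex to any pair of its vertices is an edge.

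For the converse I would induct on $k$, the case $k = 1$ being trivial since $\mathbf{B}_{A_1}$ is itself a one-edge tree. Given pairwise compatible nontrivial splits $\{A_i, A_i^c\}_{i = 1}^{k}$, which we may assume distinct, the inductive hypothesis produces a stable tree $T$ with $k - 1$ edges realizing $A_1, \ldots, A_{k - 1}$. The crux is the claim that $A_k$ \emph{localizes at a vertex} of $T$: there is a vertex $v$ of $T$ such that $A_k$ is the disjoint union of a subset of the markings $m^{-1}(v)$ carried by $v$ with the marking sets of some of the subtrees hanging off the edges at $v$. Granting this, I refine $T$ by splitting $v$ into two vertices $v', v''$ joined by a new edge, distributing the edges and markings at $v$ according to the partition cut out by $A_k$; the result is a stable tree with $k$ edges that realizes all of $A_1, \ldots, A_k$, since the new edge induces the split $\{A_k, A_k^c\}$ and the old edges are undisturbed.

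The main obstacle is the localization claim, which I would prove by orienting each edge $f$ of $T$ toward the ``mixed'' component of $T - f$ — the one whose marking set meets both $A_k$ and $A_k^c$. Compatibility of $A_k$ with the split of $f$ forces exactly one component of $T - f$ to be mixed, unless $\{A_k, A_k^c\}$ is already the split of $f$, in which case $\mathbf{B}_{A_k}$ coincides with an existing vertex and there is nothing to add; so the orientation is well-defined. A short argument with compatibility — if a vertex $u$ had two outgoing edges, the two disjoint subtrees on their far sides would both be mixed, making all four intersections of $A_k$ with one of those edge-splits nonempty — shows every vertex has at most one outgoing edge, so in $T$ there is a unique sink $v$, and this is the localizing vertex. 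Finally I would verify stability of the refined tree: the only way one of $v', v''$ could fail to be stable is if it became a $2$-valent vertex with no markings, which would force $\{A_k, A_k^c\}$ to equal the split of an existing edge, the already-excluded case, or a $1$-valent vertex with a single marking, which would make $A_k$ a block of size $1$, contradicting nontriviality. I would dispatch the small cases (such as $n = 4$, where several candidate splits are trivial) separately, keeping in mind the useful fact that in a stable genus-$0$ tree every pendant subtree automatically carries at least one marking, which is what makes the ``mixed side'' well-behaved.
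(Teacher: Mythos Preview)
Your proposal is correct and in fact does more than the paper: the paper does not prove Theorem~\ref{Phylo} at all, but simply states it as the Buneman Splits-Equivalence Theorem, citing \cite[Theorem~3.1.4]{Phylogenetics} and the flag-complex observation of Giansiracusa. What you have written is a faithful sketch of the standard proof of Buneman's theorem, translated into the language of stable genus-$0$ trees; the edge-orientation argument for localizing the new split at a vertex is exactly the classical one. One small point: your justification that a vertex cannot have two outgoing edges is slightly muddled as stated---the cleanest phrasing is that if both far subtrees $T_1, T_2$ were mixed, then the non-mixed side of the edge to $T_1$ (which contains $T_2$) would force $T_2$ to lie entirely in $A_k$ or $A_k^c$, contradicting that $T_2$ is mixed---but the conclusion is right. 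In short, you have supplied a self-contained proof where the paper is content to cite the literature.
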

 The analogous theorem for $\Delta_{0, w}$ follows from the following observation: a graph $\G$ in $\Gamma_{0, n}$ lies in $\Gamma_{0, w}$ if and only if $\G /\{e\}^c$ lies in $\Gamma_{0, w}$ for all $e \in E(\G)$. Thus, if we are given a collection $\{\mathbf{B}_{A_1}, \ldots, \mathbf{B}_{A_k}\}$ of vertices of $\Delta_{0, w}$ such that each pair of them spans a $1$-simplex of $\Delta_{0, w}$, then we can use Theorem \ref{Phylo} to guarantee that there exists some graph $\G$ in $\Gamma_{0, n}$ such that $\G$ has precisely $k$ edges $e_1, \ldots, e_k$, and such that $\G / \{e_i \}^c \cong \mathbf{B}_{A_i}$ for all $i$. By our observation, we actually have $\G \in \mathrm{Ob}(\Gamma_{0, w})$, hence $\{\mathbf{B}_{A_1}, \ldots, \mathbf{B}_{A_k}\}$ spans a simplex of $\Delta_{0, w}$. As such, we have the following corollary of Theorem \ref{Phylo}.
 \begin{cor}
 The space $\Delta_{0, w}$ is a flag complex. In particular, we have
 \[\Aut(\Delta_{0, w}) \cong \Aut(\Delta_{0, w}^{(1)}), \]
 where $\Delta_{0, w}^{(1)}$ denotes the $1$-skeleton of $\Delta_{0, w}$.
 \end{cor}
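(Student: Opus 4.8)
The plan is to deduce the Corollary from Theorem~\ref{Phylo} together with a local characterization of $w$-stability. The easy direction of the flag property is automatic: any subcollection of the vertices of a simplex of $\Delta_{0, w}$ again spans a simplex, so in particular every pair does. For the converse, suppose $\{\mathbf{B}_{A_1}, \ldots, \mathbf{B}_{A_k}\}$ is a collection of vertices of $\Delta_{0, w}$ such that each pair $\{\mathbf{B}_{A_i}, \mathbf{B}_{A_j}\}$ spans a $1$-simplex. Since each $\mathbf{B}_{A_i}$ is in particular a vertex of $\Delta_{0, n}$ and each such pair spans a $1$-simplex there too, Theorem~\ref{Phylo} produces a graph $\G \in \mathrm{Ob}(\Gamma_{0, n})$ with exactly $k$ edges $e_1, \ldots, e_k$ and $\G / \{e_i\}^c \cong \mathbf{B}_{A_i}$ for all $i$. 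It then remains only to upgrade $\G$ from a $\Gamma_{0, n}$-object to a $\Gamma_{0, w}$-object.

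The crux is the observation that a tree $\G \in \mathrm{Ob}(\Gamma_{0, n})$ lies in $\Gamma_{0, w}$ if and only if $\G / \{e\}^c$ does for every $e \in E(\G)$, and I would prove this by inspecting the inequality $\val(v) - 2 + w(m^{-1}(v)) > 0$ vertex by vertex. Because $\G$ is already $(1^{(n)})$-stable, this inequality is automatic at vertices of valence $\geq 3$, and at valence-$2$ vertices it follows from $|m^{-1}(v)| \geq 1$ together with positivity of the weights; so $w$-stability of $\G$ is equivalent to the single condition that $w(m^{-1}(v)) > 1$ at every leaf $v$. For the edge $e$ incident to such a leaf $v$, the two-vertex graph $\G/\{e\}^c$ has vertex marking sets $m^{-1}(v)$ and its complement, and is a valid $\Gamma_{0, w}$-object precisely when both have $w$-weight $> 1$; taking $e$ to range over leaf edges thus recovers the leaf conditions, while the other contractions $\G/\{e\}^c$ impose nothing new. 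Applying this to the graph $\G$ above, each $\G/\{e_i\}^c \cong \mathbf{B}_{A_i}$ is a $\Gamma_{0, w}$-object by hypothesis, hence $\G \in \mathrm{Ob}(\Gamma_{0, w})$, and so $\{\mathbf{B}_{A_1}, \ldots, \mathbf{B}_{A_k}\}$ spans a simplex of $\Delta_{0, w}$; this proves $\Delta_{0, w}$ is flag.

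For the automorphism statement I would first recall that the objects of $\Gamma_{0, w}$ are automorphism-free, so $\Delta_{0, w}$ is an honest simplicial complex and its automorphisms as a symmetric $\Delta$-complex coincide with its simplicial automorphisms (a natural automorphism is determined by its effect at the $[0]$-level, and conversely any bijection of vertices preserving simplices extends to the whole functor). Restriction to the $1$-skeleton then gives a homomorphism $\Aut(\Delta_{0, w}) \to \Aut(\Delta_{0, w}^{(1)})$, which is injective since a simplicial automorphism is determined by its action on vertices, and surjective since any automorphism of $\Delta_{0, w}^{(1)}$ carries cliques to cliques and hence, by the flag property just established, carries simplices to simplices, extending uniquely to a simplicial automorphism of $\Delta_{0, w}$. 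The only step that requires genuine care is the vertex-local criterion for $w$-stability in the middle paragraph; it is a short computation rather than a real obstacle, and everything else is either the cited Buneman Splits-Equivalence Theorem or formal manipulation of flag complexes.
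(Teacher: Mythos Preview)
Your proof is correct and follows essentially the same route as the paper: apply Theorem~\ref{Phylo} to obtain $\G \in \Gamma_{0,n}$, then use the observation that $\G \in \Gamma_{0,w}$ if and only if $\G/\{e\}^c \in \Gamma_{0,w}$ for every edge $e$. The paper states this observation without justification, whereas you supply the valence-by-valence verification; your argument for the automorphism consequence is likewise a correct unpacking of what the paper leaves implicit in the phrase ``In particular.''
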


\subsection{Calculation of $\Aut(\Delta_{0, w})$ for heavy/light $w$}
We now prove the following theorem.
\begin{thm}\label{MtimesN}
Let $m, n \geq 2$ such that $m + n \geq 5$. Then, if $w = (\varepsilon^{(m)}, 1^{(n)})$ for $\varepsilon \leq 1/m$, we have
\[\Aut(\Delta_{0, w}^{(1)}) \cong S_m \times S_n.  \]
\end{thm}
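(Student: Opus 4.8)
The plan is to work throughout with the graph $\Gamma := \Delta_{0,w}^{(1)}$. By the description of $\Delta_{0,w}$ preceding Theorem~\ref{Phylo}, the vertices of $\Gamma$ are the \emph{$w$-admissible splits}: unordered partitions $\{A, A^c\}$ of $\{1,\dots,m+n\}$ with $w(A) > 1$ and $w(A^c) > 1$, written $\mathbf B_A$; and by Theorem~\ref{Phylo}, $\mathbf B_A$ and $\mathbf B_B$ are joined by an edge exactly when one of $A\cap B$, $A\cap B^c$, $A^c\cap B$, $A^c\cap B^c$ is empty. Write $H$ and $L$ for the heavy and light markings. Since $\varepsilon \le 1/m \le 1/2$, a set $A$ occurs in an admissible split precisely when each of $A$, $A^c$ contains a heavy marking and any side containing only one heavy marking also contains a light marking. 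The group $\Aut(K_w) \cong S_m \times S_n$ (Theorem~\ref{GenusZero}) acts on $\Gamma$ by permuting markings, visibly preserving admissibility and Buneman compatibility; a short check using $m+n\ge 5$ (a nontrivial element moves some mixed atomic vertex) shows this action is faithful. So it remains to prove $\Aut(\Gamma) \subseteq S_m\times S_n$.

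For $h\in H$ and $\ell\in L$ the split $\{h,\ell\}\,|\,(\text{rest})$ is always admissible; call the corresponding vertices $\mathbf B_{\{h,\ell\}}$ \emph{mixed atomic}, and let $\mathcal M$ denote their set. A direct computation gives: if $\mathbf B_A\notin\mathcal M$ then $\mathbf B_{\{h,\ell\}}\sim\mathbf B_A$ iff $h$ and $\ell$ lie on the same side of $\{A,A^c\}$; and $\mathbf B_{\{h,\ell\}}\sim\mathbf B_{\{h',\ell'\}}$ iff $\{h,\ell\}\cap\{h',\ell'\}=\emptyset$. Thus $\mathcal M$ carries the structure of the categorical product $K_n\times K_m$ (vertex set $H\times L$, with $(h,\ell)\sim(h',\ell')$ iff $h\ne h'$ and $\ell\ne\ell'$). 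The crucial point — and the step I expect to be the main obstacle — is that $\mathcal M$ is exactly the set of vertices of maximal degree in $\Gamma$, so that $\Aut(\Gamma)$ preserves $\mathcal M$. Here $\deg\mathbf B_A$ equals the number of proper subsets of $A$ of weight $>1$ plus the number of proper subsets of $A^c$ of weight $>1$; since a set with $a$ heavy and $b$ light markings has $2^{a+b}-2^b-a$ subsets of weight $>1$, maximality of $\deg$ reduces to the inequality $2^{|A|}+2^{|A^c|}-2^{b_1}-2^{b_2}\le 2^{m+n-2}-2^{m-1}+2$ (with $b_1,b_2$ the numbers of light markings on the two sides), whose right side is attained (because $2^{|A|}+2^{|A^c|}$ dominates) at $\{|A|,|A^c|\}=\{2,m+n-2\}$, and then, since $2^{m-1}-1>0$, at a two-element side of the form $\{h,\ell\}$ rather than $\{h,h'\}$; I would verify this directly, estimating $2^{|A|}+2^{|A^c|}$ against $2^{b_1}+2^{b_2}$ over the possible sizes of $A$.

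Now let $\Phi\in\Aut(\Gamma)$. It restricts to an automorphism of $K_n\times K_m$ on $\mathcal M$, and $\Aut(K_n\times K_m)\cong S_n\times S_m$ unless $n=m$ (necessarily $\ge 3$, by $m+n\ge 5$), in which case there is additionally the transposition swapping the two factors. That transposition cannot be induced by an element of $\Aut(\Gamma)$: for $h\ne h'$ in $H$ the split $\{h,h'\}\,|\,(\text{rest})$ is admissible (as $n\ge 3$), with $\mathcal M$-neighborhood the ``heavy slab'' $\{(g,\ell):g\notin\{h,h'\}\}$; the transposition would carry this to a ``light slab'' $\{(g,\ell):\ell\notin\{\ell_0,\ell_1\}\}$, but no vertex of $\Gamma$ has such a set as its $\mathcal M$-neighborhood — such a vertex would keep some light marking on the same side as every heavy marking, impossible since each admissible split has heavy markings on both sides. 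Hence $\Phi$ acts on $\mathcal M$ as $(h,\ell)\mapsto(\rho(h),\pi(\ell))$ for a unique $(\rho,\pi)\in S_n\times S_m$; set $\sigma:=(\pi,\rho)\in S_m\times S_n=\Aut(K_w)$.

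Finally, $\Psi := \sigma^{-1}\Phi$ fixes $\mathcal M$ pointwise, and it suffices to show $\Psi=\mathrm{id}$. The assignment $\mathbf B_A\mapsto N(\mathbf B_A)\cap\mathcal M$ is injective: for $\mathbf B_A\notin\mathcal M$ this set is $\{(h,\ell):h,\ell\text{ on the same side of }\{A,A^c\}\}$, from which one recovers $\{A,A^c\}$ — indeed for each $h$ it gives the set $L_h$ of light markings on $h$'s side, two heavy markings $h,h'$ are on the same side iff $L_h=L_{h'}$ (otherwise $L_h$, $L_{h'}$ partition $L$ and so differ, using $m\ge 2$), and this pins down the partition of $H\cup L$; for $\mathbf B_A=\mathbf B_{\{h,\ell\}}\in\mathcal M$ the set is $\{(h',\ell'):h'\ne h,\ \ell'\ne\ell\}$, which recovers $(h,\ell)$; and a comparison shows these two families of subsets of $H\times L$ are disjoint. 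Since $\Psi$ is a graph automorphism fixing every element of $\mathcal M$, it fixes $N(\mathbf B_A)\cap\mathcal M$ for every $A$, hence fixes every $\mathbf B_A$; so $\Psi=\mathrm{id}$ and $\Phi=\sigma\in S_m\times S_n$. This proves $\Aut(\Delta_{0,w}^{(1)})\cong S_m\times S_n$, i.e. Theorem~\ref{MtimesN}; the degenerate case $n=2$ (so $m\ge 3$) runs the same way with no heavy slabs needed.
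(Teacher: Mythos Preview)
Your proposal is correct and follows essentially the same approach as the paper. Both arguments hinge on the same two ingredients: (i) the vertices $\mathbf{B}_{\{h,\ell\}}$ with one heavy and one light marking are exactly the vertices of maximal degree in $\Delta_{0,w}^{(1)}$ (this is Lemma~\ref{maximize}, proved in Appendix~\ref{Calculus}), so any automorphism permutes them among themselves; and (ii) every vertex $\mathbf{B}_A$ is uniquely determined by its set of neighbours among these special vertices, so an automorphism fixing all special vertices is the identity. Your degree formula and the maximization sketch agree with the paper's computation in Appendix~\ref{Calculus}.

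The one genuine difference is in how the ``product form'' of $\Phi|_{\mathcal M}$ is extracted. The paper writes ``define $\sigma(i):=i'$ and $\tau(j):=j'$'' and then asserts that $\Phi_{(\sigma,\tau)}\circ\Psi^{-1}$ fixes all special graphs, tacitly assuming that $i'$ depends only on $i$ and $j'$ only on $j$. You instead recognize the induced subgraph on $\mathcal M$ as the categorical product $K_n\times K_m$, invoke its automorphism group, and when $n=m$ explicitly rule out the factor-swap using the heavy-slab/light-slab asymmetry. This is a cleaner justification of a step the paper leaves implicit, at the cost of needing the extra case analysis when $n=m$; the underlying strategy is the same.
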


To prove Theorem \ref{MtimesN}, we will show that any automorphism of $\Delta_{0, w}^{(1)}$ can be completely described by its action on graphs of the form \[\B_{i, j} : = \B_{\{i, j\}}, \] where $i \in \{1, \dots m\}$ and $j \in \{m + 1, \dots, m + n\}$. Graphs of this form will be called \textit{\textbf{special}}. Special graphs have the maximal number of expansions among all graphs in $\Delta_{0, w}^{(1)}$:

\begin{lem}\label{maximize}
Consider the same hypotheses as Theorem \ref{MtimesN}. For a graph $\mathbf G$, let $\mathrm{exp}(\mathbf G)$ denote the number of isomorphism classes of expansions of $\mathbf G$ with precisely one more edge than $\G$. Then for all graphs $\mathbf B_{i, j}$ as above and for all vertices $\mathbf B_A \in \Delta^{(1)}_{0, w}$,
\[\mathrm{exp}(\mathbf B_{i,j}) \ge \mathrm{exp}(\mathbf B_A),\]
with equality if and only if $\mathbf B_A = \mathbf B_{i',j'}$ for possibly different indices $i' \in \{1, \dots m\}$ and $j' \in \{m + 1, \dots, m + n\}$.
\end{lem}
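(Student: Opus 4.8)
The plan is to compute $\mathrm{exp}(\mathbf{B}_A)$ directly for an arbitrary vertex $\mathbf{B}_A$ of $\Delta_{0,w}^{(1)}$ as a function of the combinatorial data of $A$ — namely the number of light markings in $A$ and the number of heavy markings in $A$ — and then maximize this function over all valid $A$. Recall that $\mathbf{B}_A$ is a one-edge tree with $A$ on one side and $A^c$ on the other, and that $A$ defines a vertex precisely when $w(A), w(A^c) > 1$; since $\varepsilon \le 1/m$, the total light weight is at most $1$, so $A$ being a vertex forces \emph{both} $A$ and $A^c$ to contain at least one heavy marking. An expansion of $\mathbf{B}_A$ with one extra edge is obtained by splitting one of the two vertices of $\mathbf{B}_A$ into two vertices joined by a new edge, distributing the markings (and keeping the old edge-endpoint on one side), subject to $w$-stability: each of the three resulting vertices must have its incident weight exceed... more precisely, each valence-plus-weight stability inequality must hold, which for these trees amounts to every vertex being "heavy enough." So $\mathrm{exp}(\mathbf{B}_A)$ counts, up to isomorphism, the ways to refine the bipartition $\{A, A^c\}$ into a tripartition realized by a path of two edges (a caterpillar with three vertices), with the stability constraint on the middle and outer blocks.

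\textbf{Key steps.} First I would set up notation: write $A$ as containing $a$ light and $b$ heavy markings, so $A^c$ contains $m-a$ light and $n-b$ heavy, with $b \ge 1$ and $n - b \ge 1$. Second, I would enumerate the expansions: splitting the $A$-side vertex means choosing a proper nonempty sub-multiset of markings of $A$ to "peel off" onto a new pendant vertex (the new vertex supports only markings, no other edge), and symmetrically for the $A^c$-side; additionally there may be expansions that split \emph{both} original vertices simultaneously only if we add one edge total, which is impossible, so every one-edge expansion splits exactly one side. Hence $\mathrm{exp}(\mathbf{B}_A) = f(a,b) + f(m-a, n-b)$, where $f(a,b)$ counts the stable ways to peel a sub-collection off a vertex carrying $a$ light and $b$ heavy markings while leaving it attached to its original edge. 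Third, I would work out $f(a,b)$: a peeled-off pendant vertex must itself be stable, so it needs total weight $> 1$, meaning it must contain at least one heavy marking (one heavy marking alone already has weight $1$, so actually it needs either $\ge 2$ markings including a heavy one, or... — here I must be careful: a valence-$1$ vertex with $h(v)=0$ needs $w(m^{-1}(v)) > 1$, so a single heavy marking with weight exactly $1$ is \emph{not} enough; it needs a heavy marking plus at least one more marking, or two heavy markings, etc.). Likewise the remaining vertex, which keeps the old edge (valence $1$ from the old edge plus $1$ from the new edge $=$ valence $2$), needs $2 - 2 + 2 + \text{weight} > 0$, automatically satisfied. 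So $f(a,b)$ essentially counts subsets of the $b$ heavy markings and the $a$ light markings that are nonempty, proper, contain at least one heavy marking, and have weight $>1$ — and because heavy markings have weight $1$ each and $\varepsilon$ is tiny, "weight $>1$ and contains a heavy marking" means "contains $\ge 2$ heavy markings, or contains exactly $1$ heavy marking together with $\ge 1$ light marking." Counting gives a clean formula in terms of $2^a$, $2^b$, $a$, $b$; then I would add the two contributions and maximize. The maximum of $2^a + 2^b$-type quantities subject to $a + (m-a) = m$ etc. is pushed to the extremes, and the detailed bookkeeping shows the unique maximizers are exactly $(a,b) = (1,1)$ and its symmetric partner $(m-1, n-1)$ — i.e. $\mathbf{B}_A = \mathbf{B}_{i,j}$ — because any light marking moved to a side with $a \ge 2$ or any imbalance reduces the count.

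\textbf{The main obstacle} I expect is the careful case analysis in pinning down $f(a,b)$ exactly, especially the boundary behavior: I must correctly handle that a single heavy marking has weight exactly $1$ (not $>1$), that a pendant vertex needs strictly more weight, that the "genus $0$, tree" setting forbids loops and multi-edges so every expansion is genuinely a vertex-split, and that when $b = 1$ (so $A$ has only one heavy marking) peeling that heavy marking off is forced to take at least one light companion. Getting these edge cases right is what makes the equality statement — that $\mathbf{B}_{i,j}$ are the \emph{only} maximizers — come out, rather than just the inequality. Once $f$ is correct, the maximization itself is an elementary monotonicity argument: I would show $f(a,b) + f(m-a,n-b)$ strictly decreases when we move a light marking from the $(a=1)$-side to the other side, or move a heavy marking to unbalance $b$ away from $1$, using the convexity of $t \mapsto 2^t$; combined with $m, n \ge 2$ and $m + n \ge 5$ to rule out degenerate coincidences, this yields the claim.
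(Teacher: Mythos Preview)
Your approach is essentially the paper's: both parametrize $\mathbf{B}_A$ by its light and heavy marking counts, obtain the same closed formula for $\mathrm{exp}(\mathbf{B}_A)$ (the paper's coordinates $(x,y)=(|A|,\#\text{light in }A)$ are your $(a+b,a)$, and the paper's per-side count $2^x - 2^y - (x-y) - 1$ is exactly your $f(a,b)$), and then maximize. The only real difference is in the optimization step --- the paper extends the formula to a continuous polygonal domain and applies calculus (interior critical point, boundary segments, a separate $n=2$ case), whereas you propose discrete convexity moves; either works, but be warned that the function is not monotone in either variable (e.g.\ increasing $b$ raises or lowers the count depending on whether $a+b$ lies below or above $(m+n)/2$), so your ``move a marking and watch the count drop'' argument will still need endpoint bookkeeping comparable to the paper's. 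One minor slip: the middle valence-$2$ vertex is \emph{not} automatically stable (with zero markings the inequality reads $0>0$), but your requirement that the peeled subset be proper is exactly what fixes this.
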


The proof of Lemma \ref{maximize} amounts to a somewhat tedious application of basic calculus, and can be found in Appendix \ref{Calculus}. Establishing an analogue of this lemma for arbitrary weight vectors seems to be the principal obstruction to determining the groups $\Aut(\Delta_{0, w})$ in general.

\begin{proof}[Proof of Theorem \ref{MtimesN}]
The desired isomorphism is given by the map
$$F: S_m \times S_n \to \Aut(\Delta_{0, w}^{(1)}),\quad (\sigma, \tau) \mapsto \Phi_{(\sigma, \tau)},$$ 
where $\Phi_{(\sigma, \tau)}$ is the automorphism of $\Delta_{0, w}^{(1)}$ that relabels light points using the permutation $\sigma \in S_m$ and the heavy points with the permutation $\tau \in S_n$. We must show that $F$ is both injective and surjective:

\subsubsection*{$F$ is injective} 
Supposing that $\Phi_{(\sigma, \tau)}$ acts as the identity on $\Delta_{0, w}^{(1)}$, we must show that $(\sigma, \tau)$ is the identity permutation. We use that $\Phi_{(\sigma, \tau)}$ in particular fixes each special graph $\mathbf B_{i,j}$. As we are assuming $m + n \ge 5$, the graph $\mathbf B_{i,j}$ has at least $3$ marked points on its other endpoint. It follows that $\{\sigma(i), \tau(j)\} = \{i, j\}$, or $\sigma(i) = i$ and $\tau(j) = j$. This demonstrates that $(\sigma, \tau)$ is the identity permutation.  

\subsubsection*{$F$ is surjective}
Fix an arbitrary automorphism $\Psi \in \Aut(\Delta_{0, w}^{(1)})$. Note that $\Psi$ preserves the number of expansions of a graph (i.e. the valence of a vertex in $\Delta_{0, w}^{(1)})$. This means that for all special graphs $\mathbf B_{i,j}$, we must have that $\Psi(\mathbf B_{i,j})$ is some special graph $\mathbf B_{i',j'}$ as well. Define the permutation $(\sigma, \tau) \in S_m \times S_n$ via $\sigma(i) := i'$ and $\tau(j) := j'$. We now claim that $\Phi_{(\sigma, \tau)} = \Psi$. Since $\Phi_{(\sigma, \tau)} \circ \Psi^{-1}$ fixes all special graphs by definition, it suffices to check that $(\Phi_{(\sigma, \tau)} \circ \Psi^{-1})(\mathbf B_A) = \mathbf B_A$ where $A$ is an arbitrary subset of left-hand weights. 

For any such graph $\mathbf B_A$, we can decompose $A$ into light and heavy weights as $A = A_L \sqcup A_H$, where $A_L \subset \{1, \dots, m\}$ and $A_H \subset \{m + 1, \dots, m + n\}$. Similarly we can decompose $A^C$ into disjoint sets $A^C_L$ and $A^C_H$. Note that the set of special graphs incident to $\mathbf B_A$ is incident is then precisely $\{\mathbf B_{i , j}\} \cup \{\mathbf B_{i',j'}\}$, where $(i, j) \in A_L \times A_H$ and $(i', j') \in A^C_L \times A^C_H$. 

In general if $\mathbf B_A$ is incident to special vertices $\{\mathbf B_{i,j}\}$, then $A$ can be recovered up to complement from the pairs $(i, j)$. Indeed, start with any such neighbor $\mathbf B_{i,j}$; without loss of generality, $i$ and $j$ are supported on the left-hand endpoint of $\mathbf B_{A}$. We can read off the rest of the markings on this vertex as follows. The left-hand light indices $i'$ are those for which $\mathbf B_{i',j}$ is incident to $\mathbf B_A$. Similarly, the left-hand heavy indices $j'$ are those for which $\mathbf B_{i',j'}$ is incident to $\mathbf B_A$ for all left-hand light indices $i'$. All of the weights $\{i'\} \cup \{j'\}$ either make up $A$ or $A^C$, so we conclude that $\mathbf B_A$ is uniquely determined by its special neighbors. 

In summary, we know that $\Phi_{(\sigma, \tau)} \circ \Psi^{-1}$ fixes all special neighbors of $\mathbf B_A$, and that $\mathbf B_A$ is the unique one-edge graph incident to all of these special neighbors. It follows that $\Phi_{(\sigma, \tau)} \circ \Psi^{-1}$ fixes $\mathbf B_A$ as well, so $\Phi_{(\sigma, \tau)} = \Psi$ and $F$ is surjective.
\end{proof}

\appendix
\section{Proof of Theorem \ref{AllProductsArise}}\label{ProofofAllProducts}
In this appendix we prove Theorem \ref{AllProductsArise}, restated here.
\begin{thm*}
    Let $G$ be a group. Then there exists $n \geq 1$ and $w \in \Q^n \cap (0, 1]^n$ such that
    \[\Aut(K_w) \cong G  \]
    if and only if $G$ is isomorphic to the direct product of finitely many symmetric groups.
\end{thm*}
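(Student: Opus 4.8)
The statement has two directions, and both follow from a single structural description of $\Aut(K_w)$ valid for \emph{every} weight vector. The plan is therefore: (1) prove that $\Aut(K_w)$ is always the direct product of the symmetric groups on the blocks of a canonical partition of $\{1,\dots,n\}$; (2) realize an arbitrary list of block sizes by an explicit $w$.

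For step (1), call $i$ and $j$ \emph{interchangeable} if the transposition $(i\,j)$ lies in $\Aut(K_w)$; unpacking the definition of $K_w$, this means $w_i+w(S)\le 1 \iff w_j+w(S)\le 1$ for all $S\subseteq\{1,\dots,n\}\setminus\{i,j\}$. Since $\Aut(K_w)$ is a group and $(i\,k)=(i\,j)(j\,k)(i\,j)$, interchangeability is an equivalence relation; let $C_1,\dots,C_r$ be its classes. As all transpositions inside a class belong to $\Aut(K_w)$ and distinct classes have disjoint support, $\prod_t \operatorname{Sym}(C_t)\subseteq \Aut(K_w)$. For the reverse containment I would introduce the relation $i\preceq j$, meaning $w(S\cup\{j\})\le 1 \Rightarrow w(S\cup\{i\})\le 1$ for all $S\subseteq\{1,\dots,n\}\setminus\{i,j\}$ (intuitively, ``$i$ is no heavier than $j$''). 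A one-line computation shows $\preceq$ is a \emph{total} preorder: if $i\not\preceq j$ and $j\not\preceq i$, then suitable witnessing sets force $w_i>w_j$ and $w_j>w_i$ simultaneously. Its associated equivalence relation is exactly interchangeability, so the classes $C_1,\dots,C_r$ form a finite chain under the induced strict order, say $C_1\prec\cdots\prec C_r$. Now any $\sigma\in\Aut(K_w)$ sends faces to faces and non-faces to non-faces, hence preserves $\preceq$, and therefore descends to an order-preserving bijection of this finite chain; a finite totally ordered set has only the identity as an order automorphism, so $\sigma$ fixes each $C_t$ setwise. Thus $\Aut(K_w)\subseteq\prod_t\operatorname{Sym}(C_t)$, giving equality and, in particular, $\Aut(K_w)\cong S_{|C_1|}\times\cdots\times S_{|C_r|}$. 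This proves the ``only if'' direction.

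For step (2), given $G\cong S_{n_1}\times\cdots\times S_{n_k}$, first delete every trivial factor (those with $n_i\le 1$); if nothing remains, take $w=(1)$, for which $K_w$ is a point with trivial automorphism group. Otherwise we may assume every $n_i\ge 2$. Put $n=n_1+\cdots+n_k$, partition $\{1,\dots,n\}$ into blocks $B_1,\dots,B_k$ of sizes $n_1,\dots,n_k$, and give every marking in $B_i$ the weight $a_i:=i/(k+1)\in\mathbb{Q}\cap(0,1]$. Markings in one block have equal weight, hence are interchangeable. Conversely, for $i<j$ pick a marking $s\in B_{k+1-i}$ with $s\notin\{x,y\}$, where $x\in B_i$ and $y\in B_j$ are the markings being compared; this is possible since $2\le k+1-i\le k$, each block has at least two elements, and $B_{k+1-i}$ contains at most one of $x,y$ (as $B_i\neq B_j$). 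Taking $S=\{s\}$ we get $w(S\cup\{x\})=a_{k+1-i}+a_i=1$ while $w(S\cup\{y\})=a_{k+1-i}+a_j=1+(j-i)/(k+1)>1$, so $x$ and $y$ are not interchangeable. Hence the interchangeability classes are precisely $B_1,\dots,B_k$, and step (1) gives $\Aut(K_w)\cong S_{n_1}\times\cdots\times S_{n_k}\cong G$.

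The conceptual heart, and the step I expect to require the most care, is the structure theorem of step (1): the key point is that ``effective weight'' organizes the markings into a \emph{totally} preordered set that every automorphism must respect, after which the rigidity of finite chains collapses all of $\Aut(K_w)$ onto the within-class permutations. Step (2) is then a routine arithmetic verification, the only mild nuisance being the bookkeeping for very small blocks, which is dispatched by first discarding the trivial factors.
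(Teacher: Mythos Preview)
Your argument is correct, and it is genuinely different from the paper's in both directions.

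For the ``only if'' direction, the paper does not introduce your total preorder $\preceq$. Instead it shows directly that $\Aut(K_w)$ satisfies the closure property ``$\sigma\in\Aut(K_w)$ and $\sigma(i)=j$ imply $(i\,j)\in\Aut(K_w)$'' via a short contradiction using the chain $w_{\sigma^{k}(i)}<w_{\sigma^{k+1}(i)}$ for all $k$, and then invokes the elementary fact that any subgroup of $S_n$ with this property is generated by its transpositions. Your route is more structural: by recognising that $\preceq$ is a \emph{total} preorder invariant under $\Aut(K_w)$, you get not just ``generated by transpositions'' but an explicit identification $\Aut(K_w)=\prod_t\operatorname{Sym}(C_t)$ with the $C_t$ canonically defined. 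This stronger statement is what lets you dispatch step~(2) so quickly.

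For the ``if'' direction the contrast is sharper. The paper builds $w$ by induction on the number of factors, first proving a symmetrisation lemma (one may replace $w$ by $\hat w$ with $K_{\hat w}=K_w$ and $\hat w$ constant on $\Aut(K_w)$-orbits), then appending either a block of new ``heavy'' or ``light'' weights, or, when both are already present, a carefully tuned pair $(1-\varepsilon/n_k,\,(\varepsilon/n_k)^{(n_k)})$. Your construction $a_i=i/(k+1)$ is a single explicit vector, and the verification that the interchangeability classes are exactly the blocks $B_i$ is a two-line arithmetic check using the witness $S=\{s\}$ with $s\in B_{k+1-i}$. The price you pay is needing each retained block to have size $\ge 2$ so that such an $s$ exists, but you handle that correctly by first deleting trivial factors. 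Overall your approach is shorter and more transparent; the paper's has the minor advantage of never needing the total-preorder lemma, but that lemma is itself quite clean.
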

    We will first prove that $\Aut(K_w)$ is always isomorphic to a product of symmetric groups, i.e. that it is generated by transpositions. We require a preliminary lemma.
    \begin{lem}
    Suppose $H$ is a subgroup of $S_n$, and that for all $\sigma \in H$ and $i \in \{1,\ldots, n\}$, we have $(i, \sigma(i)) \in H$. Then $H$ is generated by transpositions.
    \end{lem}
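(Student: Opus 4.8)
The plan is to show that $H$ coincides with the subgroup $T \leq H$ generated by all transpositions that happen to lie in $H$. First I would encode the hypothesis as a relation on $\{1, \ldots, n\}$: declare $i \sim j$ whenever $(i,j) \in H$, with the convention $i \sim i$. This relation is clearly reflexive and symmetric, and the one substantive step is transitivity: if $(i,j), (j,k) \in H$, then $H$ being a group contains the conjugate $(i,j)\,(j,k)\,(i,j)^{-1} = (i,k)$, so $i \sim k$. Hence $\sim$ is an equivalence relation, and I would let $B_1, \ldots, B_r$ be its classes, partitioning $\{1, \ldots, n\}$.

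Next I would observe that these classes are exactly the orbits of $H$ acting on $\{1, \ldots, n\}$: if $\sigma(i) = j$ for some $\sigma \in H$, the hypothesis gives $(i,j) = (i,\sigma(i)) \in H$, so $i \sim j$; conversely $i \sim j$ means $(i,j) \in H$, forcing $i,j$ into a common orbit. Consequently every element of $H$ preserves each block $B_k$ setwise, so $H$ is contained in the subgroup $\prod_{k=1}^{r} S_{B_k} \leq S_n$ consisting of permutations that fix each $B_k$.

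Finally I would close the loop with the chain $T \subseteq H \subseteq \prod_{k} S_{B_k} \subseteq T$. The first inclusion holds because $T$ is generated by elements of $H$, and the second was just established. For the third, note that for $i, j$ in the same block $B_k$ we have $i \sim j$, i.e. $(i,j) \in H$, hence $(i,j) \in T$; letting $i,j$ range over $B_k$, these transpositions generate $S_{B_k}$, and transpositions supported on distinct blocks commute, so $\prod_k S_{B_k} \subseteq T$. Therefore all the inclusions are equalities, and $H = T \cong \prod_{k} S_{B_k}$ is generated by transpositions. The only non-formal ingredient is the transitivity of $\sim$, which is a single conjugation identity, so I anticipate no real obstacle here.
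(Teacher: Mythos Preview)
Your proof is correct and takes a more structural route than the paper's. The paper argues directly: given $\sigma \in H$, write each cycle $(i_1,\ldots,i_r)$ of $\sigma$ as the product $(i_1,i_r)(i_1,i_{r-1})\cdots(i_1,i_2)$ and observe that each factor is $(i_1,\sigma^{s}(i_1))$ for some $s$, hence lies in $H$ by the hypothesis applied to the power $\sigma^s \in H$. You instead develop the equivalence relation $i \sim j \iff (i,j) \in H$, verify transitivity via a single conjugation, identify its classes with the $H$-orbits, and sandwich $H$ between the subgroup $T$ generated by its transpositions and the Young subgroup $\prod_k S_{B_k}$. The paper's argument is shorter and avoids any auxiliary constructions; your approach costs a little more setup but delivers the stronger conclusion $H = \prod_k S_{B_k}$ as an immediate byproduct, which is exactly the structural statement the paper is ultimately after in this appendix.
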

    \begin{proof}
    We want to show that given $\sigma \in H$, we can write $\sigma = \tau_1\cdots\tau_k$ where each $\tau_i \in H$ is a transposition. First consider the case where $\sigma = (i_1, \ldots, i_r)$ is a cycle. Then we have
    \begin{align*}
        \sigma &= (i_1, i_r)(i_1, i_{r-1}) \cdots (i_1, i_3)(i_1, i_2) \\&= (i_1, \sigma^{r-1}(i_1))(i_1, \sigma^{r-2}(i_1)) \cdots (i_1, \sigma^2(i_1))(i_1, \sigma(i_1)).
    \end{align*}
    Each transposition $(i_1, \sigma^{j}(i_1))$ lies in $H$, so the above gives a decomposition of the desired form for $\sigma$. To remove the assumption that $\sigma$ is a cycle, we decompose into disjoint cycles and run the same argument. 
    \end{proof}
   
    \begin{prop}\label{GeneratedByTranspositions}
    Let $w \in \Q^n \cap (0, 1]^n$. Then the subgroup $\Aut(K_w) \leq S_n$ is generated by transpositions. 
    \end{prop}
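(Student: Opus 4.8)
The plan is to apply the preceding lemma with $H = \Aut(K_w)$, so that the entire task reduces to verifying its hypothesis: for every $\sigma \in \Aut(K_w)$ and every $i \in \{1, \ldots, n\}$, the transposition $(i, \sigma(i))$ again lies in $\Aut(K_w)$. Set $j = \sigma(i)$; we may assume $i \neq j$, and write $\tau = (i,j)$.

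First I would record the elementary reduction that $\tau \in \Aut(K_w)$ if and only if for every $T \subseteq \{1,\ldots,n\} \smallsetminus \{i,j\}$ one has $w(T) + w_i \leq 1 \iff w(T) + w_j \leq 1$. Indeed, since $\tau$ fixes every index outside $\{i,j\}$, the only subsets $S$ whose membership in $K_w$ could change under $\tau$ are those containing exactly one of $i, j$, and such an $S$ has the form $T \cup \{i\}$ or $T \cup \{j\}$ with $T$ as above.

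Now suppose this fails: there is $T \subseteq \{1,\ldots,n\}\smallsetminus\{i,j\}$ with, say, $w(T) + w_i \leq 1 < w(T) + w_j$ (the reverse possibility is handled by the same argument with all inequalities reversed). The key point is that although $\sigma$ need not preserve weights, it does preserve the relation ``$\,\cdot \in K_w$''. Writing $i_k = \sigma^k(i)$ for $k \geq 0$, so that $i_{k+1} = \sigma^k(j)$, I apply the automorphism $\sigma^k \in \Aut(K_w)$ to the face $T \cup \{i\}$ and to the non-face $T \cup \{j\}$: since $i, j \notin T$ and $\sigma^k$ is injective, we have $i_k, i_{k+1} \notin \sigma^k(T)$, and we obtain
\[ w(\sigma^k(T)) + w_{i_k} \leq 1 < w(\sigma^k(T)) + w_{i_{k+1}}, \]
hence $w_{i_k} < w_{i_{k+1}}$ for every $k \geq 0$. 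Letting $e$ be the size of the $\sigma$-orbit of $i$, so $i_e = i_0 = i$, this yields $w_{i_0} < w_{i_1} < \cdots < w_{i_e} = w_{i_0}$, a contradiction. Therefore no such $T$ exists, $\tau \in \Aut(K_w)$, and the lemma applies to complete the proof.

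I expect the only real subtlety — rather than an obstacle — to be the observation driving the contradiction: an automorphism of $K_w$ permutes vertices while possibly altering their weights, but the face/non-face status of $T \cup \{i\}$ versus $T \cup \{j\}$ propagates around the $\sigma$-orbit of $i$ and forces a strictly increasing cycle of weights, which is impossible. The reduction in the second paragraph and the disjointness bookkeeping ($i_k, i_{k+1} \notin \sigma^k(T)$) are routine.
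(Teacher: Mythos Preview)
Your proof is correct and follows essentially the same argument as the paper: reduce via the preceding lemma to showing $(i,\sigma(i)) \in \Aut(K_w)$, then derive from a hypothetical witness $T$ (the paper's $L$) that the weights $w_{\sigma^k(i)}$ are strictly monotone along the $\sigma$-orbit of $i$, contradicting finiteness. Your exposition is slightly more explicit about the disjointness $i_k, i_{k+1} \notin \sigma^k(T)$ and about the reversed-inequality case, but the idea is identical.
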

    \begin{proof}
    By the preceding lemma, it suffices to prove that if $\sigma \in \Aut(K_w)$ satisfies $\sigma(i) = j$, then $\tau = (i, j) \in \Aut(K_w)$. Indeed, suppose toward a contradiction that $\tau \notin \Aut(K_w)$. Then there exists $S \subseteq \{1, \ldots, n \}$ such that $S \in K_w$ but $\tau(S) \notin K_w$, i.e. $w(S) \leq 1$, but $w(\tau(S)) > 1$. If $i, j \in S$, or $i, j \in S^c$, then $w(S) = w(\tau(S))$ so it must be that exactly one of $i, j$ lies in $S$, suppose WLOG that $i \in S$ and $j \notin S$. Write \[S = \{\ell_1, \ldots, \ell_p, i\} = L \cup \{i\} \] where $L = \{\ell_1, \ldots, \ell_p \}$. For any natural number $k \geq 0$, we have $\sigma^k \in \Aut(K_w)$, so we must have \[w(\sigma^k(S)) = w(\sigma^k(L)) + w_{\sigma^k(i)} \leq 1,\] but using that $L = \tau(L)$ and $j = \sigma(i)$, we have \[w(\sigma^k(\tau(S))) = w(\sigma^k(\tau(L))) + w_{\sigma^k(j)} = w(\sigma^k(L)) + w_{\sigma^{k + 1}(i)} > 1, \]
    so in particular
    \[w_{\sigma^{k + 1}(i)} > w_{\sigma^k(i)}  \]
    for all $k \geq 0$. This is a contradiction as $\sigma$ has finite order. We conclude that $\tau \in \Aut(K_w)$, as we wanted to show.
    \end{proof}
    The following lemma allows us to symmetrize the weight data with respect to the action of $\Aut(K_w)$.
    
    \begin{lem}
    Suppose $n \geq 2$ and $w \in \Q^n \cap (0, 1]^n$. Then there exists some $\hat{w} \in \Q^n \cap (0, 1]^n$ such that 
    \begin{enumerate}[(i)]
        \item $K_{\hat{w}} = K_w$;
        \item if $\sigma \in \Aut(K_{\hat{w}})$ with $\sigma(i) = j$, then $\hat{w}_i = \hat{w}_j$.
    \end{enumerate}
    \end{lem}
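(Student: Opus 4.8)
The plan is to take $\hat w$ to be the \emph{orbit average} of $w$ under the finite group $G := \Aut(K_w) \leq S_n$. Explicitly, I would set
\[ \hat w_i := \frac{1}{|G|} \sum_{\sigma \in G} w_{\sigma(i)} \qquad (1 \leq i \leq n). \]
Since each $w_{\sigma(i)}$ lies in $\Q \cap (0,1]$ and $|G| \in \Z_{> 0}$, the vector $\hat w$ again lies in $\Q^n \cap (0,1]^n$, because a convex combination of numbers in the convex set $(0,1]$ stays in $(0,1]$. Reindexing the sum via $\rho \mapsto \rho\sigma$ immediately yields the key symmetry property
\[ \hat w_{\sigma(i)} = \hat w_i \quad \text{for all } \sigma \in G,\ i \in \{1,\ldots,n\}, \]
and, summing over $i \in S$, the identity $\hat w(S) = \tfrac{1}{|G|} \sum_{\sigma \in G} w(\sigma(S))$ for every $S \subseteq \{1,\ldots,n\}$.

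The crux is verifying (i), namely $K_{\hat w} = K_w$, and this follows from the fact that $G = \Aut(K_w)$ permutes the faces of $K_w$ among themselves and the non-faces among themselves: for every $\sigma \in G$ and every $S$ one has $S \in K_w \iff \sigma(S) \in K_w$. Hence if $S \in K_w$ then $w(\sigma(S)) \leq 1$ for all $\sigma$, so $\hat w(S) \leq 1$ and $S \in K_{\hat w}$; while if $S \notin K_w$ then $w(\sigma(S)) > 1$ for all $\sigma$, so $\hat w(S) > 1$ and $S \notin K_{\hat w}$. Thus $K_{\hat w} = K_w$, which in particular gives $\Aut(K_{\hat w}) = \Aut(K_w) = G$. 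Property (ii) is then immediate: if $\sigma \in \Aut(K_{\hat w}) = G$ and $\sigma(i) = j$, then $\hat w_j = \hat w_{\sigma(i)} = \hat w_i$ by the symmetry property above.

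I expect essentially no obstacle here: the only point requiring a moment's thought is that averaging does not change the complex, and this is forced because an automorphism of $K_w$ cannot carry a face to a non-face, so the inequalities $w(\sigma(S)) \leq 1$ (respectively $w(\sigma(S)) > 1$) hold uniformly in $\sigma$, and averaging preserves both weak and strict inequalities. (The hypothesis $n \geq 2$ is not needed for this argument; for $n = 1$ one may simply take $\hat w = w$.)
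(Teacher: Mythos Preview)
Your proof is correct and is in fact cleaner than the paper's. The paper does not take the full orbit average; instead, relying on the preceding proposition that $\Aut(K_w)$ is generated by transpositions, it shows only the single-step fact that if $(i,j) \in \Aut(K_w)$ then replacing $w_i,w_j$ by their average $(w_i+w_j)/2$ leaves $K_w$ unchanged, and asserts that this suffices. Your approach of averaging over the whole group $G = \Aut(K_w)$ at once bypasses any need for that prior result, avoids the (implicit) iteration, and makes both (i) and (ii) fall out immediately from the observation that $G$ permutes faces and non-faces separately. The paper's step-by-step version has the minor advantage of making the role of transpositions explicit (which is used elsewhere in the appendix), but your argument is more self-contained and arguably the ``right'' proof.
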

    \begin{proof}
    Since $\Aut(K_w)$ is generated by transpositions, it suffices to show that if $\tau = (i, j) \in \Aut(K_w)$, then the weight vector $\hat{w}$ obtained from $w$ by changing both $w_i$ and $w_j$ to $(w_i + w_j)/2$ satisfies $K_{\hat{w}} = K_w$. Indeed, suppose $w(S) \leq 1$ to show that $\hat{w}(S) \leq 1$. If both $i, j$ are contained in $S$ or $S^c$, then $w(S) = \hat{w}(S)$, so it suffices to consider the case where $i \in S$ and $j \notin S$; write $S = L \cup \{i\}$ where $i,j \notin L$. Then
    \[\hat{w}(S) = w(L) + \frac{w_i + w_j}{2} \leq w(L) + \mathrm{max}(w_i, w_j) \leq 1,  \]
    since $\tau \in \Aut(K_w)$. This shows that any $S \in K_w$ satisfies $S \in K_{\hat{w}}$. Conversely suppose $\hat{w}(S) \leq 1$ to show that $w(S) \leq 1$. Again we may focus on the case where $i \in S$ but $j \notin S$; write $S = L \cup \{i\}$, where $i, j \notin L$. Suppose for contradiction that
    \[w(S) = \hat{w}(L) + w_i > 1. \]
    Then we must also have
    \[w(\tau(S)) = \hat{w}(L) + w_j > 1. \]
    It follows that
    \[\hat{w}(S) = \hat{w}(L) + \frac{w_i + w_j}{2} \geq \hat{w}(L) + \mathrm{min}(w_i, w_j) > 1,  \]
    which is a contradiction. Thus $S \in K_w$, and we are done.
    \end{proof}
    Proposition \ref{GeneratedByTranspositions} gives one direction of Theorem \ref{AllProductsArise}: since $\Aut(K_w)$ is generated by transpositions, it is always isomorphic to a direct product of symmetric groups, and this product has to be finite as $\Aut(K_w)$ is finite. We have left to show that an arbitrary finite direct product of symmetric groups can be realized in this way. 
\begin{proof}[Proof of Theorem \ref{AllProductsArise}]
    Suppose
    \[G \cong \prod_{i = 1}^{k} S_{n_i} \]
    for some integers $n_i \geq 1$. We prove that there exists $w$ such that $\Aut(K_w) \cong G$ by induction on $k$. When $k = 1$, we simply take $w$ to be an all $1$'s vector. For the inductive step, suppose we have some vector $\hat{w}$ such that $\Aut(K_{\hat{w}}) \cong \prod_{i = 1}^{k - 1} S_{n_i}$, in order to construct $w$ such that $\Aut(K_w) \cong G$. We may assume that $n_k > 1$.
    
    For an arbitrary vector $w$, we say an index $i \in \{1, \ldots, n \}$ is \textit{heavy in $w$} if $w_i + w_j > 1$ for all indices $j \neq i$. We say an index $i$ is \textit{light in $w$} if for all $S \subseteq \{1, \ldots, n \}$ with $w(S) < 1$, we have $w(S) + w_i \leq 1$. If $i$ is heavy, respectively light, in $w$, then we have that $(i, j) \in \Aut(K_w)$ if and only if $j$ is also heavy, respectively light, in $w$. Moreover, by the previous lemma, there exists some $\varepsilon > 0$ such that if $w'$ is obtained from $w$ by changing all heavy weights to $1$ and light weights to $\varepsilon$, then $K_{w'} = K_w$.
    
    If $\hat{w}$ does not contain any heavy, respectively light, weights, then we can construct $w$ by adding $n_k$ heavy, respectively light, weights to $\hat{w}$, in which case we have $\Aut(K_w) \cong G$. Otherwise, if $\hat{w}$ contains both heavy and light weights, we can assume all of the heavy weights are equal to $1$ and the light weights are equal to $\varepsilon$ for some $\varepsilon > 0$. Also by the previous lemma we may assume that whenever $\sigma \in \Aut(K_{\hat{w}})$ satisfies $\sigma(i) = j$, we have $\hat{w}_i = \hat{w}_j$. Then we set
    \[w = \left(\hat{w}_1, \ldots, \hat{w}_m, 1 - \frac{\varepsilon}{n_k}, \underbrace{\frac{\varepsilon}{n_k}, \ldots, \frac{\varepsilon}{n_k}}_{n_k} \right). \]
    We claim that $\Aut(K_w) \cong G$. Indeed, suppose that $\tau = (i, j) \in \Aut(K_{\hat{w}})$ and $S \subseteq \{1, \ldots, m + n_k + 1\}$ satisfies $w(S) \leq 1$. Then we claim that $w(\tau(S)) \leq 1$. Indeed, we have
    \begin{align*}
        w(\tau(S)) &= w(\tau(S \cap \{1, \ldots, m\})) + w(\tau(S \cap \{m +1, \ldots, m + n_k + 1 \})) \\& = \hat{w}(\tau(S \cap \{1, \ldots, m\})) + w(S \cap \{m +1, \ldots, m + n_k + 1 \}) \\&= \hat{w}(S \cap \{1, \ldots, m\}) + w(S \cap \{m +1, \ldots, m + n_k + 1 \}) \\&= w(S) \leq 1.
    \end{align*}
    Conversely, if $\tau = (i, j) \in \Aut(K_w)$ and $i, j \leq m$, then also $\tau \in \Aut(K_{\hat{w}})$. If $\tau = (i, j) \in \Aut(K_w)$ and $i > m + 1$, then we claim also $j > m + 1$. The weights $w_i$ for $i > m + 1$ are those which are equal to $\varepsilon/n_k$, and these are the unique light weights in $w$. Finally, we claim that there are no transpositions $\tau = (i, j) \in \Aut(K_w)$ where either $i$ or $j$ is equal to $m + 1$. This is because $m + 1$ is the unique vertex of $K_{w}$ which is connected by an edge to all of the light indices, but is not connected to any other indices: we have $\hat{w}_i \geq \varepsilon$ for all $i = 1, \ldots, m$.
    
    Altogether, this shows that
    \[\Aut(K_w) = \langle (i, j) \mid (i, j) \in \Aut(K_{\hat{w}}) \text{ or } w_i = w_j = \varepsilon/n_k  \rangle \cong \Aut(K_{\hat{w}}) \times S_{n_k} \cong G,  \]
    as desired.
    \end{proof}

\section{Proof of Theorem \ref{VFiltrationPreserved}}\label{VertexFiltration}

We now prove Theorem \ref{VFiltrationPreserved}, restated below. We reitirate that its proof is analogous to ~\cite[Proposition 3.4]{K}.
\begin{thm*}
    Let $\Phi \in \Aut(\Delta_{g, w})$. Then $\Phi$ preserves the subcomplexes $\V^{i}_{g, w}$ for all $i \geq 1$. 
\end{thm*}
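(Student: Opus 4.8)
The plan is to reduce the theorem to a single assertion: every $\Phi \in \Aut(\Delta_{g,w})$ preserves the number of vertices $|V(\G)|$ of each simplex $\sigma = [\G,\tau]$. Indeed, by definition $\sigma$ lies in $\V^{i}_{g,w}$ exactly when $|V(\G)| \le i$, so once we know $\Phi$ preserves $|V(\G)|$ it follows immediately that $\Phi(\V^{i}_{g,w}) = \V^{i}_{g,w}$ for every $i$. I would prove this vertex-preservation statement by the same argument as ~\cite[Proposition 3.4]{K}, which I now sketch, indicating where $w$-stability enters.

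The elementary input is that for $\sigma = [\G,\tau] \in \Delta_{g,w}[p]$ and $i \in [p]$ one has $|V(d_i\sigma)| = |V(\G)|$ when $\tau^{-1}(i)$ is a loop of $\G$, and $|V(d_i\sigma)| = |V(\G)| - 1$ otherwise; consequently $|V(\G)| = 1 + \min_{i} |V(d_i\sigma)|$ as soon as $\G$ has a non-loop edge, while $|V(\G)| = 1$ exactly when $\G$ is a bouquet of loops. I would then induct on $p$. In the inductive step, for $\sigma \in \Delta_{g,w}[p+1]$ the inductive hypothesis gives $|V(d_i\Phi\sigma)| = |V(d_i\sigma)|$ for every $i$ (using $\Phi d_i = d_i\Phi$), and by the previous observation this common data pins down both $|V(\G)|$ and $|V(\Phi\G)|$ as soon as one knows whether each of $\G,\Phi\G$ is a bouquet of loops. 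Hence the conclusion can only fail if exactly one of $\sigma,\Phi\sigma$ is a bouquet of $p+2$ loops while the other has two vertices joined by $p+2$ parallel edges — the one configuration the $\SS_{p+2}$-equivariance of $\Phi$ does not by itself rule out, since both graphs are stabilized by the full permutation group of their edge labels. But this is impossible too: contracting any edge of a bouquet of loops yields a one-vertex simplex, whereas contracting any edge of the parallel-edge graph yields a two-vertex simplex, contradicting the inductive hypothesis in dimension $p$.

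For the base case $p = 0$, when $g = 0$ every $0$-simplex has two vertices and there is nothing to prove (and in the genuinely degenerate cases where $\Delta_{g,w}$ is a point or a single interval, e.g. $(g,n)=(1,1)$, the statement is vacuous), so assume $g \ge 1$. There is a unique $0$-simplex with one vertex, namely the single loop carrying all $n$ markings on a vertex of weight $g-1$; I would distinguish it from every two-vertex $0$-simplex by the $\Phi$-invariant count $N(\rho) = |\{\tau \in \Delta_{g,w}[1] : d_0\tau = d_1\tau = \rho\}|$. No two-edge graph has two distinct edge-contractions both landing on a two-vertex graph, so $N$ vanishes on two-vertex $0$-simplices, whereas $N$ of the loop is positive because the loop admits an expansion by a $w$-stable pair of parallel edges (or, when $g \ge 2$, by a second loop); hence $\Phi$ fixes the loop $0$-simplex. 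The main obstacle in the write-up, as in ~\cite{K}, is this base-case bookkeeping together with ruling out the bouquet-versus-parallel-edges ambiguity; the rest is a routine induction. Finally, the weight vector $w$ enters only through the restriction to $w$-stable graphs, and since the stability inequality at the vertex obtained by contracting an edge is the sum of the two inequalities at its endpoints, $w$-stability is closed under edge contraction and the specific expansions used above remain $w$-stable, so the argument of ~\cite[Proposition 3.4]{K} carries over unchanged.
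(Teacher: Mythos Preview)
Your inductive step contains a genuine error. You correctly isolate the problematic case: $\sigma$ is the bouquet $[\R_{p+2}]$ of loops while $\Phi\sigma$ is two vertices joined by $p+2$ parallel edges. But your resolution is self-defeating: contracting any one of those parallel edges merges the two vertices into one, producing the bouquet $[\R_{p+1}]$ --- a \emph{one}-vertex simplex, not a two-vertex one. Indeed this is exactly why the case is problematic: every $d_i$ of both graphs equals $[\R_{p+1}]$, so neither the inductive hypothesis nor the $\SS_{p+2}$-stabilizer distinguishes them. Your base case has a similar gap when $g \ge 2$: the path $v_1 - v_2 - v_3$ with $h(v_1) = h(v_3) \ge 1$, no markings on the ends, and all markings on $v_2$ is a $w$-stable two-edge graph whose two edge-contractions are isomorphic, so $N(\rho) \ge 1$ for the resulting two-vertex $\rho$.

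The paper circumvents both issues by arguing globally rather than by induction on $p$. It first shows that $[\R_1]$ is the unique $0$-simplex lying in every facet of $\Delta_{g,w}$, by exhibiting a maximal bridgeless graph; hence $\Phi$ fixes $[\R_1]$. This immediately gives that the set of bridge-labels of any simplex is $\Phi$-invariant (an index $i$ is a bridge iff $d_{[p]\smallsetminus\{i\}}[\G,\tau] \neq [\R_1]$). Finally, one takes a maximal graph all of whose edges are loops or bridges; the bridges then form a spanning tree, and contracting exactly the bridge-labels yields $[\R_g]$, so $\Phi[\R_g] = [\R_g]$. Since $b^1(\G) \ge k$ iff $[\R_k]$ is a face of $[\G,\tau]$, this pins down $b^1$ and hence $|V| = |E| - b^1 + 1$.
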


Since $\Aut(\Delta_{g, w})$ preserves the number of edges of each edge-labelled graph, it suffices to show that $\Aut(\Delta_{g,w})$ preserves the first Betti number $b^1(\G)$ of the graph underlying a simplex $[\G, \tau]$. This is clear when $g = 0$, for in this case we have $b^1(\G) = 0$ for all $w$-stable graphs $\G$. Now fix $g \geq 1$, and $k$ such that $1 \leq k \leq g$. Put $\mathbf{R}_k$ for the unique (up to isomorphism) $\Gamma_{g,w}$-object with one vertex and $k$ loops. Each $\mathbf{R}_k$ has a unique edge-labelling up to the action of $\Aut_E(\mathbf{R}_k) \cong \SS_{k}$. We put $[\mathbf{R}_k] \in \Delta_{g, w}[k - 1]$ for the corresponding simplex of $\Delta_{g, w}$. Then given a simplex $[\G, \tau]$, we have that $b^1(\G) \geq k$ if and only if $[\G, \tau]$ has $[\R_k]$ as a face. Thus, to prove that $\Aut(\Delta_{g, w})$ fixes the first Betti number of each graph, it suffices to prove that it fixes each $[\R_k]$. Finally, each $[\R_k]$ is a face of $[\R_g]$, so it is enough just to show that $[\R_g]$ is fixed. We prove this in intermediate steps, the first being that the vertex $[\mathbf{R}_1] \in \Delta_{g, w}[0]$ is fixed.
    \begin{prop}\label{LoopPreserved}
    Suppose $g \geq 1$. Then for any $\Phi \in \Aut(\Delta_{g, w})$, we have $\Phi[\R_1] = [\R_1]$.
    \end{prop}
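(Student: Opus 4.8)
The plan is to single out the vertex $[\mathbf{R}_1]$ among all vertices of $\Delta_{g,w}$, i.e.\ among the set $\Delta_{g,w}[0]$ of one-edge $w$-stable graphs, by a combinatorial quantity that every $\Phi \in \Aut(\Delta_{g,w})$ must preserve. Since $\Phi$ preserves the number of edges of each simplex, it restricts to a permutation of $\Delta_{g,w}[0]$, whose elements are precisely $\mathbf{R}_1$ together with the two-vertex ``bridge'' graphs (one edge between two vertices, with the markings and the genus distributed between them). For a vertex $v \in \Delta_{g,w}[0]$ I would look at
\[
L(v) := \#\{\sigma \in \Delta_{g,w}[1] : d_0\sigma = d_1\sigma = v\},
\]
the number of $1$-simplices both of whose endpoints equal $v$. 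Because $\Phi$ is a bijection on $\Delta_{g,w}[1]$ commuting with $d_0$ and $d_1$, it carries $\{\sigma : d_0\sigma = d_1\sigma = v\}$ bijectively onto $\{\sigma : d_0\sigma = d_1\sigma = \Phi(v)\}$, so $L(\Phi(v)) = L(v)$. Crucially, this uses nothing about vertex counts, so there is no circularity with the statement being proved.

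The heart of the argument is then to show that $[\mathbf{R}_1]$ is the unique vertex maximizing $L$. To compute $L$ one classifies the two-edge $w$-stable graphs $\G$ with $\G/e_0 \cong \G/e_1$. A connected two-edge graph has at most three vertices and is one of: the two-petal rose $\mathbf{R}_2$; a pair of parallel edges; a bridge-plus-loop; or a three-vertex path $v_1 - v_2 - v_3$. For $\mathbf{R}_2$ and for a pair of parallel edges, contracting either edge yields $\mathbf{R}_1$; for a bridge-plus-loop the two contractions are never isomorphic, one having a loop and the other not; and for a path, a short stability check shows that $\G/e_0 \cong \G/e_1$ forces $v_1$ and $v_3$ to have equal genera and no markings while $v_2$ carries all markings, so that the common contraction is a bridge with a marking-free vertex, and each such bridge lies above at most one path. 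Consequently every bridge $v \neq \mathbf{R}_1$ has $L(v) \le 1$, while $L(\mathbf{R}_1) \ge 2$: for $g \ge 2$ one takes $\mathbf{R}_2$ together with a parallel-edge graph whose vertices have genera $g-1$ and $0$ (both contracting to $\mathbf{R}_1$), and for $g = 1$ each partition of $\{1,\dots,n\}$ into two nonempty blocks gives such a parallel-edge graph, of which there are at least two once $n \ge 3$. Hence $\Phi$ must fix the unique vertex of largest $L$-value, namely $[\mathbf{R}_1]$. The finitely many remaining low-complexity pairs (such as $(g,n) = (1,1),(1,2)$ and the markingless genus $2$ case) are handled by direct inspection of the very short list of one- and two-edge $w$-stable graphs, where $\Delta_{g,w}$ either has $[\mathbf{R}_1]$ as its only vertex or the same $L$-comparison still applies.

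The main obstacle is the middle step: carrying out the classification of the symmetric two-edge configurations and, above all, the stability bookkeeping that confirms the path case contributes at most one to each $L(v)$ while $L(\mathbf{R}_1) \ge 2$ throughout the range of interest. This is exactly the combinatorial content of the corresponding step in the proof of \cite[Proposition 3.4]{K}, which is why the present argument runs in parallel with that reference.
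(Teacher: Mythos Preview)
Your argument is correct and takes a genuinely different route from the paper. The paper characterises $[\mathbf{R}_1]$ as the unique vertex of $\Delta_{g,w}$ that is a face of \emph{every} facet: any maximal $w$-stable graph has first Betti number $g\ge 1$, hence contains a cycle and contracts to $\mathbf{R}_1$; uniqueness is shown by exhibiting, for each $(g,w)$, a bridgeless maximal graph, which can only contract to $\mathbf{R}_1$ among one-edge graphs. Your approach instead singles out $[\mathbf{R}_1]$ by a purely local invariant, the number $L(v)$ of $1$-simplices with both faces at $v$, and pins it down via a complete classification of two-edge $w$-stable graphs. Both arguments are short; the paper's avoids the small-case bookkeeping you defer to ``direct inspection'' (and the minor wrinkle that for $n=0$ and $g\ge 3$ one should take a parallel-edge graph with genera $(1,g-2)$ rather than $(0,g-1)$), while yours never needs to produce or reason about top-dimensional graphs and stays entirely within the $1$-skeleton. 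Either way the conclusion feeds identically into the proof that bridge indices, and then $[\mathbf{R}_g]$, are preserved.
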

    \begin{proof}
    We say a graph $\G$ of $\Gamma_{g, w}$ is \textit{maximal} if the only graphs admitting morphisms to $\G$ are themselves isomorphic to $\G$. Edge-labellings of maximal graphs correspond to \textit{facets} of $\Delta_{g, w}$, where a \textit{facet} of a symmetric $\Delta$-complex is a simplex which is not a proper face of any other simplex. An automorphism of a symmetric $\Delta$-complex must permute the $d$-dimensional facets amongst themselves. We claim that $[\mathbf{R}_1] \in \Delta_{g, w}[0]$ is the unique vertex which is a face of all facets of $\Delta_{g, w}$. Graph-theoretically, this is equivalent to the statement that $\mathbf{R}^1$ is the unique graph in $\Gamma_{g, w}$ which has one edge and which admits a morphism from all maximal graphs. Indeed, any maximal graph $\G$ satisfies $b^1(\G) = g$, and so must have at least one cycle. We thus get a morphism $\G \to \mathbf{R}_1$ by contracting all edges except some fixed edge which is contained in a cycle of $\G$. To see that $\mathbf{R}_1$ is the unique graph with these properties, it suffices to exhibit a maximal graph $\G$ of $\Gamma_{g, w}$, such that if there exists a morphism $\G \to \H$ with $|E(\H)| = 1$, then $\H \cong \mathbf{R}_1$. When $g = 1$, such a graph $\G$ can be constructed by taking an $n$-cycle and putting one marking at each vertex. For each $g \geq 2$ there exists at least one graph $G$ such that:
    \begin{itemize}
        \item $G$ is trivalent;
        \item $b^1(\G) = g$;
        \item $G$ has no bridges
    \end{itemize}
    where a \textit{bridge} of a graph $G$ is a non-loop edge which is not contained in any cycles (see Figure 3 in ~\cite{K} for an example of such a graph in general). Then the necessary $\Gamma_{g, w}$ object $\G$ can be constructed by choosing $n$ points on the interiors of edges of $G$, and putting a vertex supporting a marking at each chosen point. The graph $\G$ cannot contract to a graph which has a bridge, so the only graph with one edge that it contracts to is $\R_1$. 
    \end{proof}
    
    To prove that the simplex $[\R_g]$ is preserved, we first preserve that bridge indices are preserved, as in the following lemma. 
    
    \begin{lem}\label{BridgesPreserved}
    Let $[\G, \tau] \in \Delta_{g, w}[p]$, and put $\mathcal{B}^{\G}_\tau \subseteq [p]$ for the indices of bridges of $\G$. Suppose that $\Phi \in \Aut(\Delta_{g, w})$ and $[\G', \tau'] = \Phi[\G, \tau]$. Then
    \[ \mathcal{B}^{\G}_\tau = \mathcal{B}^{\G'}_{\tau'}. \]
    \end{lem}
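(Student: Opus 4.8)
The plan is to characterize the bridge indices of $[\G,\tau]$ purely in terms of the face maps of the symmetric $\Delta$-complex $\Delta_{g,w}$, and then to invoke the fact that $\Phi$, being a natural transformation, commutes with these face maps and fixes the vertex $[\R_1]$ by Proposition \ref{LoopPreserved}. For $i \in [p]$, let $\iota_i \colon [0] \hookrightarrow [p]$ be the injection with image $\{i\}$, so that $\iota_i^*[\G,\tau] \in \Delta_{g,w}[0]$ is the vertex obtained from $[\G,\tau]$ by contracting every edge of $\G$ except $e_i = \tau^{-1}(i)$. The first step is to record the elementary observation that a one-edge $w$-stable graph of genus $g$ is of exactly one of two kinds: a single loop at a vertex of weight $g-1$ carrying all $n$ markings (this graph is $\R_1$ up to isomorphism), or a single non-loop edge joining two distinct vertices.

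The heart of the argument is the claim that $i \in \mathcal{B}^{\G}_{\tau}$ if and only if $\iota_i^*[\G,\tau] \neq [\R_1]$. If $e_i$ lies on some cycle of $\G$ --- in particular if $e_i$ is itself a loop --- then contracting the other edges of that cycle identifies the two endpoints of $e_i$, so after contracting all remaining edges we are left with a single loop; since contraction preserves genus, this graph must be $\R_1$, and so $\iota_i^*[\G,\tau] = [\R_1]$. Conversely, if $e_i$ is a bridge, then no contraction of the other edges ever identifies its two endpoints, so $\iota_i^*[\G,\tau]$ is a single non-loop edge, which is not $[\R_1]$. This is the one place that requires care: one must use the convention for contracting loop edges (which increases a vertex weight) together with the invariance of genus, to ensure that the ``single loop'' outcome is forced to be exactly $\R_1$ and not some other one-edge graph. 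I expect this combinatorial claim to be the only real content of the proof.

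With the claim in hand the lemma follows formally. Writing $[\G',\tau'] = \Phi[\G,\tau]$ and using that $\Phi$ is a morphism of symmetric $\Delta$-complexes, we have $\iota_i^* \circ \Phi_p = \Phi_0 \circ \iota_i^*$; since $\Phi_0$ is a bijection of $\Delta_{g,w}[0]$ fixing $[\R_1]$ by Proposition \ref{LoopPreserved}, it follows that $\iota_i^*[\G',\tau'] = \Phi_0\bigl(\iota_i^*[\G,\tau]\bigr)$ equals $[\R_1]$ precisely when $\iota_i^*[\G,\tau]$ does. Applying the claim to both $(\G,\tau)$ and $(\G',\tau')$ then gives that $i \in \mathcal{B}^{\G'}_{\tau'}$ if and only if $i \in \mathcal{B}^{\G}_{\tau}$, whence $\mathcal{B}^{\G}_{\tau} = \mathcal{B}^{\G'}_{\tau'}$. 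This is the same ``restrict an automorphism to vertices'' device used elsewhere in the paper, so beyond the bookkeeping in the claim no obstacle is anticipated.
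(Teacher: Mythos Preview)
Your proposal is correct and follows essentially the same route as the paper: both characterize $\mathcal{B}^{\G}_\tau$ as the set of indices $i$ for which contracting all edges except $e_i$ yields a vertex different from $[\R_1]$, and then conclude using that $\Phi$ commutes with face maps and fixes $[\R_1]$ by Proposition~\ref{LoopPreserved}. Your write-up is in fact slightly more detailed than the paper's in justifying the combinatorial claim about bridges versus cycle edges, but the argument is the same.
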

    \begin{proof}
    Given a simplex $[\G, \tau] \in \Delta_{g,w} [p]$ and a proper subset of indices $S \subset [p]$, we put $d_S[\G, \tau]$ for the face of $[\G, \tau]$ obtained by contracting all edges labelled by elements of $S$. From the commutativity of diagram \ref{Simplicial}, it can be shown that for any automorphism $\Phi$ of $\Delta_{g, n}$, we have $\Phi d_S[\G, \tau] = d_S \Phi[\G, \tau]$. With this notation in place, we can characterize $\mathcal{B}^{\G}_\tau$ as follows:
    \[\mathcal{B}^{\G}_\tau = \{ i \in [p] \mid d_{[p] \smallsetminus \{i\}} [\G, \tau] \neq [\R_1] \}. \] That is, an edge $e$ is a bridge of $\G$ if and only if upon contracting all edges in $\G$ besides $e$, we do not get a loop. The lemma now follows from this description of $\mathcal{B}^{\G}_\tau$ and Proposition \ref{LoopPreserved}.
    \end{proof}
    
    We can now prove that automorphisms preserve the simplex $[\R_g]$.
    \begin{prop}\label{RosePreserved}
    Let $g \geq 1$ and suppose $\Phi \in \Aut(\Delta_{g, w})$. Then $\Phi[\R_g] = [\R_g]$.
    \end{prop}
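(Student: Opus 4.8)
The plan is to show that the graph $\G$ underlying $\Phi[\R_g]$ has a single vertex; since $\G$ has genus $g$, this forces $\G\cong\R_g$ (and the case $g=1$ is already Proposition~\ref{LoopPreserved}). Write $\Phi[\R_g,\rho]=[\G,\tau]$, so $\G$ has $g$ edges and $b^1(\G)+\sum_{v}h(v)=g$. Because $\R_g$ has no bridges, Lemma~\ref{BridgesPreserved} gives that $\G$ has no bridges; and because $\Phi[\R_1]=[\R_1]$ by Proposition~\ref{LoopPreserved}, contracting all but any one edge of $[\G,\tau]$ returns $[\R_1]$, which again records that every edge of $\G$ lies on a cycle. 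Together with the fact that $\Phi$ preserves the number of edges, these are the only inputs available beyond the bare simplicial structure and its face maps.

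To pin down $\G$ I would play off the codimension-one faces of distinguished expansions of $[\R_g]$. Splitting the unique vertex of $\R_g$ into two vertices joined by all $g+1$ edges produces a bridgeless $(g+1)$-edge class $\mathbf Z$ every one of whose codimension-one faces equals $[\R_g]$; and for $1\le a\le g-1$, joining two roses $\R_a$ and $\R_{g-a}$ by a single new edge produces an $(g+1)$-edge class with exactly one bridge that has $[\R_g]$ among its codimension-one faces. Since $\Phi$ commutes with every face map and, by Lemma~\ref{BridgesPreserved}, preserves bridge indices, $\Phi[\mathbf Z]$ is a bridgeless $(g+1)$-edge class all of whose codimension-one contractions equal $[\G,\tau]$, and the images of the dumbbell expansions are $(g+1)$-edge classes carrying a single bridge and having $[\G,\tau]$ as a face. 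Arguing as in the proof of \cite[Proposition~3.4]{K}, one classifies which graphs can have all of their codimension-one faces equal to a fixed class, and the presence of these distinguished bridged neighbours then forces $\G$ to be one-vertex, i.e. $\G\cong\R_g$; the preservation of the remaining simplices $[\R_k]$ follows since $[\R_k]$ is the unique codimension-$(g-k)$ face of $[\R_g]$.

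The real obstacle, as usual in this circle of arguments, is the classification step. Being bridgeless with all codimension-one contractions equal does not by itself single out the rose: the two-vertex ``banana'' graphs (all edges parallel between two vertices, one of weight $1$), and for small $g$ a few further graphs, satisfy the same face conditions. Eliminating them requires descending one more level — tracking the bridges carried by the expansions of the dumbbell graphs and iterating the bookkeeping of \cite{K} — or, alternatively, identifying the link of the vertex $[\R_1]$ in $\Delta_{g,w}$ with a tropical Hassett complex of genus $g-1$ and running an induction on $g$ whose base case is precisely Proposition~\ref{LoopPreserved}.
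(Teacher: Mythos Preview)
Your proposal is not a complete proof: the ``classification step'' on which everything hinges is never carried out. You correctly reduce to showing that the image $[\G,\tau]=\Phi[\R_g]$ has a single vertex, and correctly use Lemma~\ref{BridgesPreserved} to see that $\G$ is bridgeless with $g$ edges. But from there you only observe that the banana $\mathbf Z$ has all its codimension-one faces equal to $[\R_g]$, note that this property does not single out the rose among bridgeless graphs, and then gesture at two possible fixes (further bookkeeping from \cite{K}, or an induction via the link of $[\R_1]$) without executing either. As written, the argument stops exactly at the hard point.

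The paper's proof sidesteps this classification entirely by working from the top down rather than from $[\R_g]$ up. Choose a \emph{maximal} graph $\G$ in $\Gamma_{g,w}$ whose every edge is a loop or a bridge; in such a graph the bridges constitute a spanning tree, and contracting them yields $[\R_g]$. Write $[\G',\tau']=\Phi[\G,\tau]$. Maximality is preserved, so $b^1(\G')=g$ and $|V(\G')|=|V(\G)|$. By Lemma~\ref{BridgesPreserved} the bridge index set $\mathcal B$ is the same for $\G'$; since bridges lie in every spanning tree and $|\mathcal B|$ already equals the size of a spanning tree of $\G'$, the $\mathcal B$-indexed edges of $\G'$ form a spanning tree as well. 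Contracting a spanning tree in any first-Betti-number-$g$ object yields $\R_g$, so
\[
\Phi[\R_g]=\Phi\, d_{\mathcal B}[\G,\tau]=d_{\mathcal B}[\G',\tau']=[\R_g].
\]
The key idea you are missing is to exhibit $[\R_g]$ as a \emph{specific} face (namely $d_{\mathcal B}$) of a larger simplex whose relevant structure---the bridge set---is already known to be $\Phi$-invariant, rather than trying to recognise $[\R_g]$ intrinsically among all bridgeless graphs.
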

    \begin{proof}
    Suppose $\G$ is a maximal graph of $\Gamma_{g, w}$, with the property that every bridge of $\G$ is either a loop or a bridge (it is straightforward to construct examples of such $\G$ for all $g \geq 1$ and weight vectors $w$). Let $\tau: E(\G) \to [p]$ be any edge-labelling of $\G$, and put $[\G', \tau'] = \Phi[\G, \tau]$. Then we claim $\G'$ also has the property that all of its bridges are either loops or bridges. Indeed, $\G'$ must also be maximal, so $b^1(\G') = b^1(\G) = g$, and hence we have $|V(\G')| = |V(\G)|$. By Lemma \ref{BridgesPreserved}, $\G'$ has the same number of bridges as $\G$, and if we set $\mathcal{B} = \mathcal{B}^{\G}_\tau \subset [p]$, then $\mathcal{B}$ indexes the bridges in both $\G$ and $\G'$. Since bridges are contained in all spanning trees, the edges indexed by $\mathcal{B}$ in $\G'$ must be contained in some spanning tree of $\G'$. On the other hand, we know the edges indexed by $\mathcal{B}$ in $\G$ form a spanning tree of $\G$. Since $\G$ and $\G'$ have the same number of vertices, they have the same number of edges in a spanning tree. Therefore the edges indexed by $\mathcal{B}$ in $\G'$ form a spanning tree. Whenever we contract a spanning tree in a $\Gamma_{g, w}$-object of first Betti number $g$, the resulting graph is $\mathbf{R}_g$. In particular, we have
    \[ \Phi[\R_g] =  \Phi d_{\mathcal{B}} [\G, \tau] = d_{\mathcal{B}} [\G', \tau'] = [\R_g], \]
    and the proof is complete.
    \end{proof}
    As per the discussion preceding Proposition \ref{LoopPreserved}, Theorem \ref{VFiltrationPreserved} is a corollary of Proposition \ref{RosePreserved}.
\section{Proof of Lemma \ref{maximize}}\label{Calculus}   
We restate the lemma for convenience:
\begin{lemma}
Consider the same hypotheses as Theorem \ref{MtimesN}. For a graph $\mathbf G$, let $\mathrm{exp}(\mathbf G)$ denote the number of isomorphism classes of expansions of $\mathbf G$ with precisely one more edge than $\G$. Then for all graphs $\mathbf B_{i,j}$ as above and for all vertices $\mathbf B_A \in \Delta^{(1)}_{0, w}$,
\[\mathrm{exp}(\mathbf B_{i,j}) \ge \mathrm{exp}(\mathbf B_A),\]
with equality if and only if $\mathbf B_A = \mathbf B_{i',j'}$ for possibly different indices $i' \in \{1, \dots m\}$ and $j' \in \{m + 1, \dots, m + n\}$.
\end{lemma}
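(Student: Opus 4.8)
\section*{Proof plan}

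The plan is to convert $\mathrm{exp}(\mathbf B_A)$ into an explicit arithmetic function of the numbers of light and heavy markings lying in $A$, and then to maximize that function. Since every object of $\Gamma_{0,w}$ is a tree, a graph with exactly one more edge than $\mathbf B_A$ is a path $v_1 - v_2 - v_3$ carrying a marking set $S_i$ at each $v_i$; $w$-stability forces $w(S_1), w(S_3) > 1$ and $S_2 \neq \varnothing$, and such a graph is an expansion of $\mathbf B_A$ exactly when $S_1$ or $S_3$ belongs to $\{A, A^c\}$. Up to the isomorphism exchanging $v_1$ and $v_3$ we may assume $S_1 \in \{A, A^c\}$, and the cases $S_1 = A$, $S_1 = A^c$ are disjoint as soon as $S_2 \neq \varnothing$; this gives the identity
\[ \mathrm{exp}(\mathbf B_A) \;=\; \nu(A) + \nu(A^c), \qquad \text{where}\quad \nu(T) := \#\{\, U : \varnothing \neq U \subsetneq T,\ w(U) > 1 \,\}. \]

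Next I would compute $\nu$ in closed form. The hypothesis $\varepsilon \leq 1/m$ means exactly that $w(U) > 1$ holds precisely when $U$ contains at least two heavy indices, or exactly one heavy index together with at least one light index. A routine count then gives, for a set $T$ with $p$ light and $q$ heavy markings, $\#\{U \subseteq T : w(U) > 1\} = 2^{p+q} - 2^{p} - q$, hence $\nu(T) = 2^{p+q} - 2^{p} - q - 1$ whenever $w(T) > 1$ (which is automatic for $T = A$ and $T = A^c$, since $\mathbf B_A$ is a vertex of $\Delta_{0,w}^{(1)}$). Writing $\alpha$ and $\beta$ for the numbers of light and heavy markings in $A$, this yields
\[ \mathrm{exp}(\mathbf B_A) \;=\; f(\alpha,\beta), \qquad f(\alpha,\beta) := 2^{\alpha+\beta} + 2^{m+n-\alpha-\beta} - 2^{\alpha} - 2^{m-\alpha} - n - 2, \]
and in particular $\mathrm{exp}(\mathbf B_{i,j}) = f(1,1) = 2^{m+n-2} - 2^{m-1} - n$.

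It remains to maximize $f$ over the admissible region, namely $1 \leq \beta \leq n-1$ together with the corner constraints $\alpha \geq 1$ when $\beta = 1$ and $\alpha \leq m-1$ when $\beta = n-1$ (these are precisely the conditions $w(A), w(A^c) > 1$). For fixed $\alpha$, the dependence of $f$ on $\beta$ is through the strictly convex expression $2^{\alpha+\beta} + 2^{m+n-\alpha-\beta}$, so the maximum over the admissible $\beta$-interval is attained at an endpoint; combined with the symmetry $f(\alpha,\beta) = f(m-\alpha, n-\beta)$ this reduces the problem to comparing $f(1,1)$ against $f(\gamma,1)$ for admissible $\gamma$ and against $f(0,2)$. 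The key computation is the factorization
\[ f(1,1) - f(\gamma,1) \;=\; \bigl(2^{\gamma-1} - 1\bigr)\bigl(2^{m-\gamma}(2^{n-1} - 1) - 2\bigr), \]
together with $f(1,1) - f(0,2) = 2^{m-1} - 1 > 0$. One checks that the right-hand side of the factorization is $\geq 0$ on the admissible range of $\gamma$ and vanishes precisely when $\gamma = 1$ or when $n = 2$ and $\gamma = m-1$, i.e. precisely when $(\gamma,1)$ or its complement corresponds to a graph $\mathbf B_{i',j'}$; tracing the equality case back through the convexity reduction then shows that $f(\alpha,\beta) = f(1,1)$ forces $A$ or $A^c$ to be a two-element set consisting of one light and one heavy index, which is exactly the assertion of the lemma.

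I expect the only real difficulty to be bookkeeping: keeping careful track of the admissible range of $(\alpha,\beta)$ through the convexity reduction — in particular the truncated $\beta$-intervals when $\alpha \in \{0, m\}$ and the boundary cases $n = 2$ and $m = 2$ — and confirming that the equality locus of $f$ consists of exactly the special vertices and nothing more.
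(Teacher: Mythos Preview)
Your proof is correct, and your derivation of the closed form for $\mathrm{exp}(\mathbf B_A)$ matches the paper's exactly (after the change of variables $x=\alpha+\beta$, $y=\alpha$). The optimization step, however, is genuinely different. The paper treats $n=2$ separately and for $n\geq 3$ computes the gradient of $f$, locates the interior critical point, parametrizes the three boundary segments, finds their critical points, and then checks four separate numerical inequalities among the candidate values. Your argument instead exploits that, for fixed $\alpha$, the $\beta$-dependence sits entirely in the strictly convex expression $2^{\alpha+\beta}+2^{m+n-\alpha-\beta}$, so the maximum occurs at an endpoint of the admissible $\beta$-interval; together with the symmetry $(\alpha,\beta)\leftrightarrow(m-\alpha,n-\beta)$ this reduces everything to the single family $f(\gamma,1)$ and the one extra point $f(0,2)$. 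The factorization
\[
f(1,1)-f(\gamma,1)=(2^{\gamma-1}-1)\bigl(2^{m-\gamma}(2^{n-1}-1)-2\bigr)
\]
then makes both the inequality and its equality locus immediate, and handles $n=2$ and $n\geq 3$ uniformly. Your route is shorter and avoids calculus entirely; the paper's route is more mechanical but requires more case-checking. Either way the content is the same, and your bookkeeping of the truncated $\beta$-intervals at $\alpha\in\{0,m\}$ (which produces the extra comparison with $f(0,2)$) is exactly the point where care is needed.
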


\begin{proof}
In what follows let $\mathbf B_A$ be a graph with one edge, where we think of the weights in $A$ as occupying the left-hand vertex. Set $x := |A|$, and suppose that there are $y$ light weights in $A$.

We are interested in maximizing the number of expansions of $\mathbf B_A$. Left expansions are bijective correspondence with subsets $S$ of $A$ such that $w(S) > 1$ and $|A \smallsetminus S| > 0$. There are $2^x - 2^y - (x - y) - 1$ such subsets $S$: $2^x$ total subsets of $A$, minus the $2^y$ subsets consisting solely of light weights (including the empty subset), minus the $x - y$ singleton subsets consisting solely of one heavy weight, minus the subset $A$ itself. Repeating the counting argument on the other side, we conclude that
\begin{align*}
\exp(\mathbf B_A) &= 2^x - 2^y - (x - y) - 1 + 2^{(m + n) - x} - 2^{m - y} - [(m + n - x) - (m - y)] - 1\\
&= 2^x - 2^y + 2^{m + n - x} - 2^{m - y} - 2 - n
\end{align*}
expansions total.

It therefore suffices to maximize
$$f(x, y) := 2^x - 2^y + 2^{(m + n) - x} - 2^{m - y}$$
over a domain that includes all permissible integer values of $(x, y)$. Such a domain is determined by the three inequalities $2 \le x \le (m + n) - 2$, $0 \le y \le m$, and $1 \le (x - y) \le n - 1$; see Figure \ref{fig:graph(6,2)}.

\begin{figure}[h]
    \centering
    \includegraphics{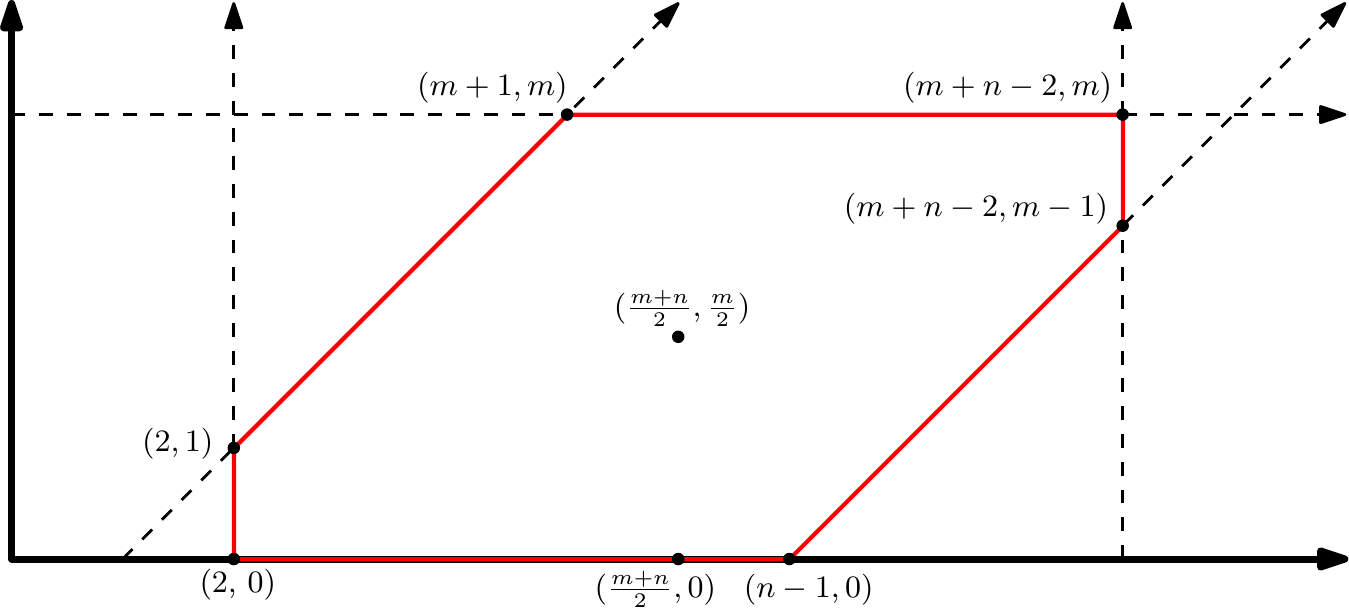}
    \caption{The domain of $f(x, y)$}  \label{fig:graph(6,2)}
\end{figure}

These inequalities arise as follows. First, the stability condition requires that both vertices of $\mathbf B_A$ have at least two weights on them, so $x = |A|$ is bounded by 2 and $m + n - 2$. Second, the number of light weights on either vertex cannot exceed $m$, the total number of light weights. Finally, there must be at least one heavy weight on either vertex, so that the number of left-hand heavy weights $x - y$ is at least 1 and at most $n - 1$. 

To prevent some of the above ranges from collapsing, it is convenient to address the case $n = 2$ separately from $n \ge 3$:
\subsubsection*{$\mathbf{n = 2}$.} As there are exactly two heavy weights, each vertex of $\mathbf B_A$ supports one of them. In other words, we have $y = x - 1$. We are now maximizing the function
$$f(x, x - 1) = 2^x - 2^{x - 1} + 2^{(m + 2) - x} - 2^{m - (x - 1)} = 2^{x - 1} + 2^{m - x + 1}.$$
over the interval $2 \le x \le m$. As the second derivative
$$\frac{d^2}{dx^2}(2^{x - 1} + 2^{m - x + 1}) = 2^{-x - 1}\log^2(2)(2^{m + 2} + 4^x)$$
is non-negative on $[2, m]$, $f(x, x- 1)$ is convex and thus achieves its maximum value on its endpoint $x = 2$ as desired. (The other endpoint $x = m$ corresponds to the complement $\mathbf B_{A^C} = \mathbf B_A$.)

\subsubsection*{$\mathbf{n \ge 3}$.} First, we look for critical points on the interior of the region. We compute the gradient as $\nabla f = \langle \log(2) 2^x - 2^{m + n - x}, 2^{-y} \log(2) (2^m - 4^y) \rangle$. Setting the partial derivatives equal to 0, we find that there is one critical point located at $((m + n) / 2, m / 2)$.

We now optimize $f$ over the boundary. Note that there is a symmetry originating from exchanging the two vertices of $\mathbf B_A$; symbolically, this is the involution $(x, y) \mapsto (m + n - x, m - y)$. Therefore it suffices to optimize $f$ over only half of the boundary, i.e. only over the left-hand equalities.

Specifically, we consider restricting $f$ to the following the three boundary segments:
\begin{align*}
&f(2, y),\ \text{for}\ 0 \le y \le 1,\\
&f(x, 0),\ \text{for}\ 2 \le x \le n - 1\\
&f(1 + y , y),\ \text{for}\ 1 \le y \le m.
\end{align*}
We first look for critical points on the interiors of these segments, and second consider the values of $f$ at their endpoints.

\begin{itemize}
    \item First, note that $f(2, y) = 2^{m + n - 2} - 2^{m - y} - 2^y + 4$ has a critical point at $y = m /2$. As $m \ge 2$, this critical point is outside of the interior of the interval for $y$.
    \item Second, note that $f(2, y) = f(x, 0) = 2^{m + n - x} - 2^m + 2^x - 1 + 4$ has one critical point at $x = (m + n) / 2$. This point is interior when $2 < (m + n) / 2$ and $(m + n) / 2 < n - 1$. That is, there is a critical point interior to this edge whenever $n > m + 2$.
    \item Finally, note that $f(1 + y, y) = 2^{m + n - y - 1} - 2^{m - y} + 2^y$ has a critical point when $2^m + 2^{2y} = 2^{m + n - 1}$. Since $m \neq m + n - 1$ and $2y = m + n - 1$ immediately leads to a contradiction, when $2y \neq m$ all of the exponents in this equation are distinct. Thus there is no integer solution by the uniqueness of binary representations. In case $2y = m$ the equation reduces to $m + 1 = m - n - 1$, or $n = 2$. As we are assuming $n \ge 3$, this edge contains no critical points. 
\end{itemize}

We now consider the function values at the interior critical point, the point on the boundary edge interior that is sometimes a critical point, and three of the six vertices:
\begin{align*}
    &f((m + n) / 2, m / 2) = 2^{(m + n)/ 2 + 1} - 2^{m/2 + 1}\\
    &f((m + n) / 2, 0) = 2^{(m + n) / 2 + 1} - 1 - 2^m\\
    &f(2, 0) = 2^{m + n - 2} - 2^m + 3\\
    &f(2, 1) = 2^{m + n - 2} - 2^{m - 1} + 2\\
    &f(n - 1, 0) = 2^m + 2^{n - 1} - 1.
\end{align*}

We claim that $f(2, 1)$ is at least as big as all of these values. It suffices to check the following four inequalities:

\begin{itemize}
\item $f((m + n) / 2, m / 2) < f(2, 1)$: We want to show that

\[2^{(m + n)/2 + 1} - 2^{m/2 + 1} < 2^{m + n - 2} - 2^{m - 1} + 2, \]
or equivalently that
\[2^{m - 1} - 2^{m/2 + 1} < 2^{m + n - 2} - 2^{(m + n)/2 + 1} + 2. \]
It suffices to show that the function
\[g(x) = 2^{x} - 2^{(x + 3)/2}  \]
is non-decreasing in $x$, for $x \geq 2$. Indeed we have
\[g'(x) = \log 2 \cdot 2^{x} - \frac{\log 2}{2} \cdot 2^{(x + 3)/2} = \log 2 \cdot 2^{x} - \log 2 \cdot 2^{1/2} \cdot 2^{x/2} > 0  \]
for $x > 1$, proving the claim.

\item $f((m + n) / 2, 0) < f(2, 1)$:
This inequality is equivalent to 
$$2^{(m + n)/2} < 2^{m + n - 3} + 2^{m - 2} + 3/2.$$
For $m + n \ge 6$ we have 
$$2^{(m + n)/2} \le 2^{m + n - 3} < 2^{m + n - 3} + 3/2,$$
proving the claim in every case except when $m + n = 5$. In that case, the inequality becomes 
$$2^m > 16\sqrt{2} - 22 \approx 0.6237,$$
which holds since $m \ge 2$.

\item $f(2, 0) < f(2, 1)$: This is equivalent to $3 - 2^{m} < 2 - 2^{m - 1}$, which is true for $1 < 2^{m - 1}$, i.e. for all $m > 1$.

\item $f(n - 1, 0) < f(2, 1)$: This is equivalent to 
$$2^{m - 1} + 2^m + 2^{n - 1} < 2^{m + n - 2} + 3,$$
or $$(2^m - 2)(2^n - 6) > 0.$$
As we have $m > 1$, the inequality reduces to $2^n > 6$, or $n > 2.58496...$. But $n \ge 3$, so this inequality holds.\qedhere
\end{itemize}
\end{proof}

\bibliographystyle{alpha}
\bibliography{hassettbib}
\end{document}